 \def\dated#1{\def\thedate{#1}}%
 \newdimen\xydashw@@
\newdimen\high%
\newdimen\ul%
\newdimen\wdth%
\def\ratchet#1#2{\ifnum#1<#2\global #1=#2\fi}%
\def\ifnextchar#1#2#3{\let\@tempe%
#1\def\@tempa{#2}\def\@tempb{#3}\futurelet%
    \@tempc\@ifnch}%
\def\@ifnch{\ifx \@tempc \@sptoken \let\@tempd\@xifnch%
      \else \ifx \@tempc \@tempe\let\@tempd\@tempa\else\let\@tempd\@tempb\fi%
      \fi \@tempd}%
\def\:{\let\@sptoken= } \:  
\def\:{\@xifnch} \expandafter\def\: {\futurelet\@tempc\@ifnch}%
\let\ifnextchar\@ifnextchar%
\newdimen\axis \axis=\fontdimen22\textfont2%
\def\scalefactor#1{\ul=#1\ul \X@xbase=#1\X@xbase \Y@ybase=#1\Y@ybase}%
\def\fontscale#1{%
\if#1h\relax%
\font\xydashfont=xydash10 scaled \magstephalf%
\font\xyatipfont=xyatip10 scaled \magstephalf%
\font\xybtipfont=xybtip10 scaled \magstephalf%
\font\xybsqlfont=xybsql10 scaled \magstephalf%
\font\xycircfont=xycirc10 scaled \magstephalf%
\else%
\font\xydashfont=xydash10 scaled \magstep#1%
\font\xyatipfont=xyatip10 scaled \magstep#1%
\font\xybtipfont=xybtip10 scaled \magstep#1%
\font\xybsqlfont=xybsql10 scaled \magstep#1%
\font\xycircfont=xycirc10 scaled \magstep#1%
\fi}%
\def\bfig{\vcenter\bgroup\xy}%
\def\efig{\endxy\egroup}%
\def\car#1#2\nil{#1}%
\def\morphism{\ifnextchar({\morphismp}{\morphismp(0,0)}}%
\def\morphismp(#1){\ifnextchar|{\morphismpp(#1)}{\morphismpp(#1)|a|}}%
\def\morphismpp(#1)|#2|{\ifnextchar/{\morphismppp(#1)|#2|}%
    {\morphismppp(#1)|#2|/>/}}%
\def\morphismppp(#1)|#2|/#3/{%
    \ifnextchar<{\morphismpppp(#1)|#2|/#3/}%
    {\morphismpppp(#1)|#2|/#3/<\default,0>}}%
\def\morphismpppp(#1,#2)|#3|/#4/<#5,#6>[#7`#8;#9]{%
\xend#1\advance \xend by #5%
\yend#2\advance \yend by #6%
\domorphism(#1,#2)|#3|/#4/<#5,#6>[{#7}`{#8};{#9}]}%
\def\domorphism(#1,#2)|#3|/#4/<#5,#6>[#7`#8;#9]{%
\def\next{\car#4.\nil}%
\if@\next\relax%
 \if#3l%
  \ifnum #6>0%
   \POS(#1,#2)*+!!<0ex,\axis>{#7}\ar#4^-{#9} (\xend,\yend)*+!!<0ex,\axis>{#8}%
  \else%
   \POS(#1,#2)*+!!<0ex,\axis>{#7}\ar#4_-{#9} (\xend,\yend)*+!!<0ex,\axis>{#8}%
  \fi%
 \else \if#3m%
    \setbox0\hbox{$#9$}%
   \ifdim \wd0=0pt%
     \POS(#1,#2)*+!!<0ex,\axis>{#7}\ar#4 (\xend,\yend)*+!!<0ex,\axis>{#8}%
   \else%
     \POS(#1,#2)*+!!<0ex,\axis>{#7}\ar#4|-*+<1pt,4pt>{\labelstyle#9}%
       (\xend,\yend)*+!!<0ex,\axis>{#8}%
   \fi%
 \else \if#3r%
  \ifnum #6<0%
   \POS(#1,#2)*+!!<0ex,\axis>{#7}\ar#4^-{#9} (\xend,\yend)*+!!<0ex,\axis>{#8}%
  \else%
   \POS(#1,#2)*+!!<0ex,\axis>{#7}\ar#4_-{#9} (\xend,\yend)*+!!<0ex,\axis>{#8}%
  \fi%
 \else \if#3a%
  \ifnum #5>0%
   \POS(#1,#2)*+!!<0ex,\axis>{#7}\ar#4^-{#9} (\xend,\yend)*+!!<0ex,\axis>{#8}%
  \else%
   \POS(#1,#2)*+!!<0ex,\axis>{#7}\ar#4_-{#9} (\xend,\yend)*+!!<0ex,\axis>{#8}%
  \fi%
 \else \if#3b%
  \ifnum #5<0%
   \POS(#1,#2)*+!!<0ex,\axis>{#7}\ar#4^-{#9} (\xend,\yend)*+!!<0ex,\axis>{#8}%
  \else%
   \POS(#1,#2)*+!!<0ex,\axis>{#7}\ar#4_-{#9} (\xend,\yend)*+!!<0ex,\axis>{#8}%
  \fi%
 \else%
   \POS(#1,#2)*+!!<0ex,\axis>{#7}\ar#4 (\xend,\yend)*+!!<0ex,\axis>{#8}%
 \fi\fi\fi\fi\fi%
\else%
 \if#3l%
  \ifnum #6>0%
   \POS(#1,#2)*+!!<0ex,\axis>{#7}\ar@{#4}^-{#9} (\xend,\yend)*+!!<0ex,\axis>{#8}%
  \else%
   \POS(#1,#2)*+!!<0ex,\axis>{#7}\ar@{#4}_-{#9} (\xend,\yend)*+!!<0ex,\axis>{#8}%
  \fi%
 \else \if#3m%
    \setbox0\hbox{$#9$}%
   \ifdim \wd0=0pt%
     \POS(#1,#2)*+!!<0ex,\axis>{#7}\ar@{#4} (\xend,\yend)*+!!<0ex,\axis>{#8}%
   \else%
     \POS(#1,#2)*+!!<0ex,\axis>{#7}\ar@{#4}|-*+<1pt,4pt>{\labelstyle#9}%
         (\xend,\yend)*+!!<0ex,\axis>{#8}%
   \fi%
 \else \if#3r%
  \ifnum #6<0%
   \POS(#1,#2)*+!!<0ex,\axis>{#7}\ar@{#4}^-{#9} (\xend,\yend)*+!!<0ex,\axis>{#8}%
  \else%
   \POS(#1,#2)*+!!<0ex,\axis>{#7}\ar@{#4}_-{#9} (\xend,\yend)*+!!<0ex,\axis>{#8}%
  \fi%
 \else \if#3a%
  \ifnum #5>0%
   \POS(#1,#2)*+!!<0ex,\axis>{#7}\ar@{#4}^-{#9} (\xend,\yend)*+!!<0ex,\axis>{#8}%
  \else%
   \POS(#1,#2)*+!!<0ex,\axis>{#7}\ar@{#4}_-{#9} (\xend,\yend)*+!!<0ex,\axis>{#8}%
  \fi%
 \else \if#3b%
  \ifnum #5<0%
   \POS(#1,#2)*+!!<0ex,\axis>{#7}\ar@{#4}^-{#9} (\xend,\yend)*+!!<0ex,\axis>{#8}%
  \else%
   \POS(#1,#2)*+!!<0ex,\axis>{#7}\ar@{#4}_-{#9} (\xend,\yend)*+!!<0ex,\axis>{#8}%
  \fi%
 \else%
   \POS(#1,#2)*+!!<0ex,\axis>{#7}\ar@{#4} (\xend,\yend)*+!!<0ex,\axis>{#8}%
 \fi\fi\fi\fi\fi%
\fi\ignorespaces}%
\def\vect(#1,#2)/#3/<#4,#5>{%
 \xend#1 \yend#2 \advance\xend by #4 \advance\yend by #5%
     \POS(#1,#2)\ar#3 (\xend,\yend)}%
\def\squarepppp(#1,#2)|#3|/#4`#5`#6`#7/<#8>[#9]{%
\xpos#1\ypos#2%
\def\next|##1##2##3##4|{%
 \def\xa{##1}\def\xb{##2}\def\xc{##3}\def\xd{##4}\ignorespaces}%
\next|#3|%
\def\next<##1,##2>{\deltax=##1\deltay=##2\ignorespaces}%
\next<#8>%
\def\next[##1`##2`##3`##4;##5`##6`##7`##8]{%
    \def\nodea{##1}\def\nodeb{##2}\def\nodec{##3}\def\noded{##4}%
    \def\labela{##5}\def\labelb{##6}\def\labelc{##7}\def\labeld{##8}\ignorespaces}%
\next[#9]%
\morphism(\xpos,\ypos)|\xd|/{#7}/<\deltax,0>[\nodec`\noded;\labeld]%
\advance \ypos by \deltay%
\morphism(\xpos,\ypos)|\xb|/{#5}/<0,-\deltay>[\nodea`\nodec;\labelb]%
\morphism(\xpos,\ypos)|\xa|/{#4}/<\deltax,0>[\nodea`\nodeb;\labela]%
 \advance \xpos by \deltax%
\morphism(\xpos,\ypos)|\xc|/{#6}/<0,-\deltay>[\nodeb`\noded;\labelc]%
\ignorespaces}%
\def\square{\ifnextchar({\squarep}{\squarep(0,0)}}%
\def\squarep(#1){\ifnextchar|{\squarepp(#1)}{\squarepp(#1)|alrb|}}%
\def\squarepp(#1)|#2|{\ifnextchar/{\squareppp(#1)|#2|}%
    {\squareppp(#1)|#2|/>`>`>`>/}}%
\def\squareppp(#1)|#2|/#3`#4`#5`#6/{%
    \ifnextchar<{\squarepppp(#1)|#2|/#3`#4`#5`#6/}%
    {\squarepppp(#1)|#2|/#3`#4`#5`#6/<\default,\default>}}%
\def\diamondpppp(#1,#2)|#3|/#4`#5`#6`#7/<#8>[#9]{%
\xpos#1\ypos#2%
\def\next|##1##2##3##4|{%
 \def\xa{##1}\def\xb{##2}\def\xc{##3}\def\xd{##4}\ignorespaces}%
\next|#3|%
\def\next<##1,##2>{\deltax=##1\deltay=##2\ignorespaces}%
\next<#8>%
\def\next[##1`##2`##3`##4;##5`##6`##7`##8]{%
    \def\nodea{##1}\def\nodeb{##2}\def\nodec{##3}\def\noded{##4}%
    \def\labela{##5}\def\labelb{##6}\def\labelc{##7}%
\def\labeld{##8}\ignorespaces}%
\next[#9]%
\advance\ypos\deltay
\morphism(\xpos,\ypos)|\xc|/{#6}/<\deltax,-\deltay>[\nodeb`\noded;\labelc]%
\advance\xpos \deltax
\advance\xpos \deltax
\morphism(\xpos,\ypos)|\xd|/{#7}/<-\deltax,-\deltay>[\nodec`\noded;\labeld]%
\advance\ypos\deltay \advance\xpos -\deltax
\morphism(\xpos,\ypos)|\xa|/{#4}/<-\deltax,-\deltay>[\nodea`\nodeb;\labela]%
\morphism(\xpos,\ypos)|\xb|/{#5}/<\deltax,-\deltay>[\nodea`\nodec;\labelb]%
}
\def\diamondp(#1){\ifnextchar|{\diamondpp(#1)}{\diamondpp(#1)|lrlr|}}%
\def\diamondpp(#1)|#2|{\ifnextchar/{\diamondppp(#1)|#2|}%
    {\diamondppp(#1)|#2|/>`>`>`>/}}%
\def\diamondppp(#1)|#2|/#3`#4`#5`#6/{%
    \ifnextchar<{\diamondpppp(#1)|#2|/#3`#4`#5`#6/}%
    {\diamondpppp(#1)|#2|/#3`#4`#5`#6/<400,400>}}%
\def\ptrianglepppp(#1,#2)|#3|/#4`#5`#6/<#7>[#8]{%
\xpos#1\ypos#2%
\def\next|##1##2##3|{\def\xa{##1}\def\xb{##2}\def\xc{##3}}%
\next|#3|%
\def\next<##1,##2>{\deltax=##1\deltay=##2\ignorespaces}%
\next<#7>%
\def\next[##1`##2`##3;##4`##5`##6]{%
    \def\nodea{##1}\def\nodeb{##2}\def\nodec{##3}%
    \def\labela{##4}\def\labelb{##5}\def\labelc{##6}}%
\next[#8]%
\advance\ypos by \deltay%
\morphism(\xpos,\ypos)|\xa|/{#4}/<\deltax,0>[\nodea`\nodeb;\labela]%
\morphism(\xpos,\ypos)|\xb|/{#5}/<0,-\deltay>[\nodea`\nodec;\labelb]%
\advance\xpos by \deltax%
\morphism(\xpos,\ypos)|\xc|/{#6}/<-\deltax,-\deltay>[\nodeb`\nodec;\labelc]%
\ignorespaces}%
\def\qtrianglepppp(#1,#2)|#3|/#4`#5`#6/<#7>[#8]{%
\xpos#1\ypos#2%
\def\next|##1##2##3|{\def\xa{##1}\def\xb{##2}\def\xc{##3}}%
\next|#3|%
\def\next<##1,##2>{\deltax=##1\deltay=##2\ignorespaces}%
\next<#7>%
\def\next[##1`##2`##3;##4`##5`##6]{%
    \def\nodea{##1}\def\nodeb{##2}\def\nodec{##3}%
    \def\labela{##4}\def\labelb{##5}\def\labelc{##6}}%
\next[#8]%
\advance\ypos by \deltay%
\morphism(\xpos,\ypos)|\xa|/{#4}/<\deltax,0>[\nodea`\nodeb;\labela]%
\morphism(\xpos,\ypos)|\xb|/{#5}/<\deltax,-\deltay>[\nodea`\nodec;\labelb]%
\advance\xpos by \deltax%
\morphism(\xpos,\ypos)|\xc|/{#6}/<0,-\deltay>[\nodeb`\nodec;\labelc]%
\ignorespaces}%
\def\dtrianglepppp(#1,#2)|#3|/#4`#5`#6/<#7>[#8]{%
\xpos#1\ypos#2%
\def\next|##1##2##3|{\def\xa{##1}\def\xb{##2}\def\xc{##3}}%
\next|#3|%
\def\next<##1,##2>{\deltax=##1\deltay=##2\ignorespaces}%
\next<#7>%
\def\next[##1`##2`##3;##4`##5`##6]{%
    \def\nodea{##1}\def\nodeb{##2}\def\nodec{##3}%
    \def\labela{##4}\def\labelb{##5}\def\labelc{##6}}%
\next[#8]%
\morphism(\xpos,\ypos)|\xc|/{#6}/<\deltax,0>[\nodeb`\nodec;\labelc]%
\advance\ypos by \deltay\advance \xpos by \deltax%
\morphism(\xpos,\ypos)|\xa|/{#4}/<-\deltax,-\deltay>[\nodea`\nodeb;\labela]%
\morphism(\xpos,\ypos)|\xb|/{#5}/<0,-\deltay>[\nodea`\nodec;\labelb]%
\ignorespaces}%
\def\btrianglepppp(#1,#2)|#3|/#4`#5`#6/<#7>[#8]{%
\xpos#1\ypos#2%
\def\next|##1##2##3|{\def\xa{##1}\def\xb{##2}\def\xc{##3}}%
\next|#3|%
\def\next<##1,##2>{\deltax=##1\deltay=##2\ignorespaces}%
\next<#7>%
\def\next[##1`##2`##3;##4`##5`##6]{%
    \def\nodea{##1}\def\nodeb{##2}\def\nodec{##3}%
    \def\labela{##4}\def\labelb{##5}\def\labelc{##6}}%
\next[#8]%
\morphism(\xpos,\ypos)|\xc|/{#6}/<\deltax,0>[\nodeb`\nodec;\labelc]%
\advance\ypos by \deltay%
\morphism(\xpos,\ypos)|\xa|/{#4}/<0,-\deltay>[\nodea`\nodeb;\labela]%
\morphism(\xpos,\ypos)|\xb|/{#5}/<\deltax,-\deltay>[\nodea`\nodec;\labelb]%
\ignorespaces}%
\def\Atrianglepppp(#1,#2)|#3|/#4`#5`#6/<#7>[#8]{%
\xpos#1\ypos#2%
\def\next|##1##2##3|{\def\xa{##1}\def\xb{##2}\def\xc{##3}}%
\next|#3|%
\def\next<##1,##2>{\deltax=##1\deltay=##2\ignorespaces}%
\next<#7>%
\def\next[##1`##2`##3;##4`##5`##6]{%
    \def\nodea{##1}\def\nodeb{##2}\def\nodec{##3}%
    \def\labela{##4}\def\labelb{##5}\def\labelc{##6}}%
\next[#8]%
\multiply\deltax by 2%
\morphism(\xpos,\ypos)|\xc|/{#6}/<\deltax,0>[\nodeb`\nodec;\labelc]%
\divide\deltax by 2%
\advance\ypos by \deltay\advance\xpos by \deltax%
\morphism(\xpos,\ypos)|\xa|/{#4}/<-\deltax,-\deltay>[\nodea`\nodeb;\labela]%
\morphism(\xpos,\ypos)|\xb|/{#5}/<\deltax,-\deltay>[\nodea`\nodec;\labelb]%
\ignorespaces}%
\def\Vtrianglepppp(#1,#2)|#3|/#4`#5`#6/<#7>[#8]{%
\xpos#1\ypos#2%
\def\next|##1##2##3|{\def\xa{##1}\def\xb{##2}\def\xc{##3}}%
\next|#3|%
\def\next<##1,##2>{\deltax=##1\deltay=##2\ignorespaces}%
\next<#7>%
\def\next[##1`##2`##3;##4`##5`##6]{%
    \def\nodea{##1}\def\nodeb{##2}\def\nodec{##3}%
    \def\labela{##4}\def\labelb{##5}\def\labelc{##6}}%
\next[#8]%
\advance\ypos by \deltay%
\morphism(\xpos,\ypos)|\xb|/{#5}/<\deltax,-\deltay>[\nodea`\nodec;\labelb]%
\multiply\deltax by 2%
\morphism(\xpos,\ypos)|\xa|/{#4}/<\deltax,0>[\nodea`\nodeb;\labela]%
\advance\xpos by \deltax \divide \deltax by 2%
\morphism(\xpos,\ypos)|\xc|/{#6}/<-\deltax,-\deltay>[\nodeb`\nodec;\labelc]%
\ignorespaces}%
\def\Ctrianglepppp(#1,#2)|#3|/#4`#5`#6/<#7>[#8]{%
\xpos#1\ypos#2%
\def\next|##1##2##3|{\def\xa{##1}\def\xb{##2}\def\xc{##3}}%
\next|#3|%
\def\next<##1,##2>{\deltax=##1\deltay=##2\ignorespaces}%
\next<#7>%
\def\next[##1`##2`##3;##4`##5`##6]{%
    \def\nodea{##1}\def\nodeb{##2}\def\nodec{##3}%
    \def\labela{##4}\def\labelb{##5}\def\labelc{##6}}%
\next[#8]%
\advance \ypos by \deltay%
\morphism(\xpos,\ypos)|\xc|/{#6}/<\deltax,-\deltay>[\nodeb`\nodec;\labelc]%
\advance\ypos by \deltay \advance \xpos by \deltax%
\morphism(\xpos,\ypos)|\xa|/{#4}/<-\deltax,-\deltay>[\nodea`\nodeb;\labela]%
\multiply\deltay by 2%
\morphism(\xpos,\ypos)|\xb|/{#5}/<0,-\deltay>[\nodea`\nodec;\labelb]%
\ignorespaces}%
\def\Dtrianglepppp(#1,#2)|#3|/#4`#5`#6/<#7>[#8]{%
\xpos#1\ypos#2%
\def\next|##1##2##3|{\def\xa{##1}\def\xb{##2}\def\xc{##3}}%
\next|#3|%
\def\next<##1,##2>{\deltax=##1\deltay=##2\ignorespaces}%
\next<#7>%
\def\next[##1`##2`##3;##4`##5`##6]{%
    \def\nodea{##1}\def\nodeb{##2}\def\nodec{##3}%
    \def\labela{##4}\def\labelb{##5}\def\labelc{##6}}%
\next[#8]%
\advance\xpos by \deltax \advance\ypos by \deltay%
\morphism(\xpos,\ypos)|\xc|/{#6}/<-\deltax,-\deltay>[\nodeb`\nodec;\labelc]%
\advance\xpos by -\deltax \advance\ypos by \deltay%
\morphism(\xpos,\ypos)|\xb|/{#5}/<\deltax,-\deltay>[\nodea`\nodeb;\labelb]%
\multiply \deltay by 2%
\morphism(\xpos,\ypos)|\xa|/{#4}/<0,-\deltay>[\nodea`\nodec;\labela]%
\ignorespaces}%
\def\ptrianglep(#1){\ifnextchar|{\ptrianglepp(#1)}{\ptrianglepp(#1)|alr|}}%
\def\ptrianglepp(#1)|#2|{\ifnextchar/{\ptriangleppp(#1)|#2|}%
    {\ptriangleppp(#1)|#2|/>`>`>/}}%
\def\ptriangleppp(#1)|#2|/#3`#4`#5/{%
    \ifnextchar<{\ptrianglepppp(#1)|#2|/#3`#4`#5/}%
    {\ptrianglepppp(#1)|#2|/#3`#4`#5/<\default,\default>}}%
\def\qtriangle{\ifnextchar({\qtrianglep}{\qtrianglep(0,0)}}%
\def\qtrianglep(#1){\ifnextchar|{\qtrianglepp(#1)}{\qtrianglepp(#1)|alr|}}%
\def\qtrianglepp(#1)|#2|{\ifnextchar/{\qtriangleppp(#1)|#2|}%
    {\qtriangleppp(#1)|#2|/>`>`>/}}%
\def\qtriangleppp(#1)|#2|/#3`#4`#5/{%
    \ifnextchar<{\qtrianglepppp(#1)|#2|/#3`#4`#5/}%
    {\qtrianglepppp(#1)|#2|/#3`#4`#5/<\default,\default>}}%
\def\dtrianglep(#1){\ifnextchar|{\dtrianglepp(#1)}{\dtrianglepp(#1)|lrb|}}%
\def\dtrianglepp(#1)|#2|{\ifnextchar/{\dtriangleppp(#1)|#2|}%
    {\dtriangleppp(#1)|#2|/>`>`>/}}%
\def\dtriangleppp(#1)|#2|/#3`#4`#5/{%
    \ifnextchar<{\dtrianglepppp(#1)|#2|/#3`#4`#5/}%
    {\dtrianglepppp(#1)|#2|/#3`#4`#5/<\default,\default>}}%
\def\btrianglep(#1){\ifnextchar|{\btrianglepp(#1)}{\btrianglepp(#1)|lrb|}}%
\def\btrianglepp(#1)|#2|{\ifnextchar/{\btriangleppp(#1)|#2|}%
    {\btriangleppp(#1)|#2|/>`>`>/}}%
\def\btriangleppp(#1)|#2|/#3`#4`#5/{%
    \ifnextchar<{\btrianglepppp(#1)|#2|/#3`#4`#5/}%
    {\btrianglepppp(#1)|#2|/#3`#4`#5/<\default,\default>}}%
\def\Atrianglep(#1){\ifnextchar|{\Atrianglepp(#1)}{\Atrianglepp(#1)|lrb|}}%
\def\Atrianglepp(#1)|#2|{\ifnextchar/{\Atriangleppp(#1)|#2|}%
    {\Atriangleppp(#1)|#2|/>`>`>/}}%
\def\Atriangleppp(#1)|#2|/#3`#4`#5/{%
    \ifnextchar<{\Atrianglepppp(#1)|#2|/#3`#4`#5/}%
    {\Atrianglepppp(#1)|#2|/#3`#4`#5/<\default,\default>}}%
\def\Vtriangle{\ifnextchar({\Vtrianglep}{\Vtrianglep(0,0)}}%
\def\Vtrianglep(#1){\ifnextchar|{\Vtrianglepp(#1)}{\Vtrianglepp(#1)|alb|}}%
\def\Vtrianglepp(#1)|#2|{\ifnextchar/{\Vtriangleppp(#1)|#2|}%
    {\Vtriangleppp(#1)|#2|/>`>`>/}}%
\def\Vtriangleppp(#1)|#2|/#3`#4`#5/{%
    \ifnextchar<{\Vtrianglepppp(#1)|#2|/#3`#4`#5/}%
    {\Vtrianglepppp(#1)|#2|/#3`#4`#5/<\default,\default>}}%
\def\Ctrianglep(#1){\ifnextchar|{\Ctrianglepp(#1)}{\Ctrianglepp(#1)|arb|}}%
\def\Ctrianglepp(#1)|#2|{\ifnextchar/{\Ctriangleppp(#1)|#2|}%
    {\Ctriangleppp(#1)|#2|/>`>`>/}}%
\def\Ctriangleppp(#1)|#2|/#3`#4`#5/{%
    \ifnextchar<{\Ctrianglepppp(#1)|#2|/#3`#4`#5/}%
    {\Ctrianglepppp(#1)|#2|/#3`#4`#5/<\default,\default>}}%
\def\Dtrianglep(#1){\ifnextchar|{\Dtrianglepp(#1)}{\Dtrianglepp(#1)|lab|}}%
\def\Dtrianglepp(#1)|#2|{\ifnextchar/{\Dtriangleppp(#1)|#2|}%
    {\Dtriangleppp(#1)|#2|/>`>`>/}}%
\def\Dtriangleppp(#1)|#2|/#3`#4`#5/{%
    \ifnextchar<{\Dtrianglepppp(#1)|#2|/#3`#4`#5/}%
    {\Dtrianglepppp(#1)|#2|/#3`#4`#5/<\default,\default>}}%
\def\Atrianglepairpppp(#1)|#2|/#3`#4`#5`#6`#7/<#8>[#9]{%
\def\next(##1,##2){\xpos##1\ypos##2}%
\next(#1)%
\def\next|##1##2##3##4##5|{\def\xa{##1}\def\xb{##2}%
\def\xc{##3}\def\xd{##4}\def\xe{##5}}%
\next|#2|%
\def\next<##1,##2>{\deltax=##1\deltay=##2\ignorespaces}%
\next<#8>%
\def\next[##1`##2`##3`##4;##5`##6`##7`##8`##9]{%
 \def\nodea{##1}\def\nodeb{##2}\def\nodec{##3}\def\noded{##4}%
 \def\labela{##5}\def\labelb{##6}\def\labelc{##7}\def\labeld{##8}\def\labele{##9}}%
\next[#9]%
\morphism(\xpos,\ypos)|\xd|/{#6}/<\deltax,0>[\nodeb`\nodec;\labeld]%
\advance\xpos by \deltax%
\morphism(\xpos,\ypos)|\xe|/{#7}/<\deltax,0>[\nodec`\noded;\labele]%
\advance\ypos by \deltay%
\morphism(\xpos,\ypos)|\xa|/{#3}/<-\deltax,-\deltay>[\nodea`\nodeb;\labela]%
\morphism(\xpos,\ypos)|\xb|/{#4}/<0,-\deltay>[\nodea`\nodec;\labelb]%
\morphism(\xpos,\ypos)|\xc|/{#5}/<\deltax,-\deltay>[\nodea`\noded;\labelc]%
\ignorespaces}%
\def\Vtrianglepairpppp(#1)|#2|/#3`#4`#5`#6`#7/<#8>[#9]{%
\def\next(##1,##2){\xpos##1\ypos##2}%
\next(#1)%
\def\next|##1##2##3##4##5|{\def\xa{##1}\def\xb{##2}%
\def\xc{##3}\def\xd{##4}\def\xe{##5}}%
\next|#2|%
\def\next<##1,##2>{\deltax=##1\deltay=##2\ignorespaces}%
\next<#8>%
\def\next[##1`##2`##3`##4;##5`##6`##7`##8`##9]{%
 \def\nodea{##1}\def\nodeb{##2}\def\nodec{##3}\def\noded{##4}%
 \def\labela{##5}\def\labelb{##6}\def\labelc{##7}\def\labeld{##8}\def\labele{##9}}%
\next[#9]%
\advance\ypos by \deltay%
\morphism(\xpos,\ypos)|\xa|/{#3}/<\deltax,0>[\nodea`\nodeb;\labela]%
\morphism(\xpos,\ypos)|\xc|/{#5}/<\deltax,-\deltay>[\nodea`\noded;\labelc]%
\advance\xpos by \deltax%
\morphism(\xpos,\ypos)|\xb|/{#4}/<\deltax,0>[\nodeb`\nodec;\labelb]%
\morphism(\xpos,\ypos)|\xd|/{#6}/<0,-\deltay>[\nodeb`\noded;\labeld]%
\advance\xpos by \deltax%
\morphism(\xpos,\ypos)|\xe|/{#7}/<-\deltax,-\deltay>[\nodec`\noded;\labele]%
\ignorespaces}%
\def\Ctrianglepairpppp(#1)|#2|/#3`#4`#5`#6`#7/<#8>[#9]{%
\def\next(##1,##2){\xpos##1\ypos##2}%
\next(#1)%
\def\next|##1##2##3##4##5|{\def\xa{##1}\def\xb{##2}%
\def\xc{##3}\def\xd{##4}\def\xe{##5}}%
\next|#2|%
\def\next<##1,##2>{\deltax=##1\deltay=##2\ignorespaces}%
\next<#8>%
\def\next[##1`##2`##3`##4;##5`##6`##7`##8`##9]{%
 \def\nodea{##1}\def\nodeb{##2}\def\nodec{##3}\def\noded{##4}%
 \def\labela{##5}\def\labelb{##6}\def\labelc{##7}\def\labeld{##8}\def\labele{##9}}%
\next[#9]%
\advance\ypos by \deltay%
\morphism(\xpos,\ypos)|\xe|/{#7}/<0,-\deltay>[\nodec`\noded;\labele]%
\advance\xpos by -\deltax%
\morphism(\xpos,\ypos)|\xc|/{#5}/<\deltax,0>[\nodeb`\nodec;\labelc]%
\morphism(\xpos,\ypos)|\xd|/{#6}/<\deltax,-\deltay>[\nodeb`\noded;\labeld]%
\advance\ypos by \deltay%
\advance\xpos by \deltax%
\morphism(\xpos,\ypos)|\xa|/{#3}/<-\deltax,-\deltay>[\nodea`\nodeb;\labela]%
\morphism(\xpos,\ypos)|\xb|/{#4}/<0,-\deltay>[\nodea`\nodec;\labelb]%
\ignorespaces}%
\def\Dtrianglepairpppp(#1)|#2|/#3`#4`#5`#6`#7/<#8>[#9]{%
\def\next(##1,##2){\xpos##1\ypos##2}%
\next(#1)%
\def\next|##1##2##3##4##5|{\def\xa{##1}\def\xb{##2}%
\def\xc{##3}\def\xd{##4}\def\xe{##5}}%
\next|#2|%
\def\next<##1,##2>{\deltax=##1\deltay=##2\ignorespaces}%
\next<#8>%
\def\next[##1`##2`##3`##4;##5`##6`##7`##8`##9]{%
 \def\nodea{##1}\def\nodeb{##2}\def\nodec{##3}\def\noded{##4}%
 \def\labela{##5}\def\labelb{##6}\def\labelc{##7}\def\labeld{##8}\def\labele{##9}}%
\next[#9]%
\advance\ypos by \deltay%
\morphism(\xpos,\ypos)|\xc|/{#5}/<\deltax,0>[\nodeb`\nodec;\labelc]%
\morphism(\xpos,\ypos)|\xd|/{#6}/<0,-\deltay>[\nodeb`\noded;\labeld]%
\advance\ypos by \deltay%
\morphism(\xpos,\ypos)|\xa|/{#3}/<0,-\deltay>[\nodea`\nodeb;\labela]%
\morphism(\xpos,\ypos)|\xb|/{#4}/<\deltax,-\deltay>[\nodea`\nodec;\labelb]%
\advance\ypos by -\deltay%
\advance\xpos by \deltax%
\morphism(\xpos,\ypos)|\xe|/{#7}/<-\deltax,-\deltay>[\nodec`\noded;\labele]%
\ignorespaces}%
\def\Atrianglepair{\ifnextchar({\Atrianglepairp}{\Atrianglepairp(0,0)}}%
\def\Atrianglepairp(#1){\ifnextchar|{\Atrianglepairpp(#1)}%
{\Atrianglepairpp(#1)|lmrbb|}}%
\def\Atrianglepairpp(#1)|#2|{\ifnextchar/{\Atrianglepairppp(#1)|#2|}%
    {\Atrianglepairppp(#1)|#2|/>`>`>`>`>/}}%
\def\Atrianglepairppp(#1)|#2|/#3`#4`#5`#6`#7/{%
    \ifnextchar<{\Atrianglepairpppp(#1)|#2|/#3`#4`#5`#6`#7/}%
    {\Atrianglepairpppp(#1)|#2|/#3`#4`#5`#6`#7/<\default,\default>}}%
\def\Vtrianglepairp(#1){\ifnextchar|{\Vtrianglepairpp(#1)}%
{\Vtrianglepairpp(#1)|aalmr|}}%
\def\Vtrianglepairpp(#1)|#2|{\ifnextchar/{\Vtrianglepairppp(#1)|#2|}%
    {\Vtrianglepairppp(#1)|#2|/>`>`>`>`>/}}%
\def\Vtrianglepairppp(#1)|#2|/#3`#4`#5`#6`#7/{%
    \ifnextchar<{\Vtrianglepairpppp(#1)|#2|/#3`#4`#5`#6`#7/}%
    {\Vtrianglepairpppp(#1)|#2|/#3`#4`#5`#6`#7/<\default,\default>}}%
\def\Ctrianglepairp(#1){\ifnextchar|{\Ctrianglepairpp(#1)}%
{\Ctrianglepairpp(#1)|lrmlr|}}%
\def\Ctrianglepairpp(#1)|#2|{\ifnextchar/{\Ctrianglepairppp(#1)|#2|}%
    {\Ctrianglepairppp(#1)|#2|/>`>`>`>`>/}}%
\def\Ctrianglepairppp(#1)|#2|/#3`#4`#5`#6`#7/{%
    \ifnextchar<{\Ctrianglepairpppp(#1)|#2|/#3`#4`#5`#6`#7/}%
    {\Ctrianglepairpppp(#1)|#2|/#3`#4`#5`#6`#7/<\default,\default>}}%
\def\Dtrianglepairp(#1){\ifnextchar|{\Dtrianglepairpp(#1)}%
{\Dtrianglepairpp(#1)|lrmlr|}}%
\def\Dtrianglepairpp(#1)|#2|{\ifnextchar/{\Dtrianglepairppp(#1)|#2|}%
    {\Dtrianglepairppp(#1)|#2|/>`>`>`>`>/}}%
\def\Dtrianglepairppp(#1)|#2|/#3`#4`#5`#6`#7/{%
    \ifnextchar<{\Dtrianglepairpppp(#1)|#2|/#3`#4`#5`#6`#7/}%
    {\Dtrianglepairpppp(#1)|#2|/#3`#4`#5`#6`#7/<\default,\default>}}%
\def\pplace[#1](#2,#3)[#4]{\POS(#2,#3)*+!!<0ex,\axis>!#1{#4}\ignorespaces}%
\def\cplace(#1,#2)[#3]{\POS(#1,#2)*+!!<0ex,\axis>{#3}\ignorespaces}%
\def\place{\ifnextchar[{\pplace}{\cplace}}%
\def\pullback#1]#2]{\square#1]\trident#2]\ignorespaces}%
\def\tridentppp|#1#2#3|/#4`#5`#6/<#7,#8>[#9]{%
\def\next[##1;##2`##3`##4]{\def\nodee{##1}\def\labele{##2}%
   \def\labelf{##3}\def\labelg{##4}}%
\next[#9]%
\advance \xpos by -\deltax%
\advance \xpos by -#7\advance \ypos by #8%
\advance\deltax by #7%
\morphism(\xpos,\ypos)|#1|/{#4}/<\deltax,-#8>[\nodee`\nodeb;\labele]%
\advance\deltax by -#7%
\morphism(\xpos,\ypos)|#2|/{#5}/<#7,-#8>[\nodee`\nodea;\labelf]%
\advance\deltay by #8%
\morphism(\xpos,\ypos)|#3|/{#6}/<#7,-\deltay>[\nodee`\nodec;\labelg]%
\ignorespaces}%
\def\trident{\ifnextchar|{\tridentp}{\tridentp|amb|}}%
\def\tridentp|#1|{\ifnextchar/{\tridentpp|#1|}{\tridentpp|#1|/{>}`{>}`{>}/}}%
\def\tridentpp|#1|/#2/{\ifnextchar<{\tridentppp|#1|/#2/}%
  {\tridentppp|#1|/#2/<500,500>}}%
\def\setmorphismwidth#1#2#3#4{%
 \setbox0=\hbox{$#1{\labelstyle#3#3}#2$}#4=\wd0%
 \divide #4 by 2 \divide #4 by \ul%
 \advance #4 by 350 \ratchet{#4}{500}}%
\def\setSquarewidth[#1`#2`#3`#4;#5`#6`#7`#8]{%
 \setmorphismwidth{#1}{#2}{#5}{\topw}%
 \setmorphismwidth{#3}{#4}{#8}{\botw}%
\ratchet{\topw}{\botw}}%
\def\Squarepppp(#1)|#2|/#3/<#4>[#5]{%
 \setSquarewidth[#5]%
 \squarepppp(#1)|#2|/#3/<\topw,#4>[#5]%
\ignorespaces}%
\def\Square{\ifnextchar({\Squarep}{\Squarep(0,0)}}%
\def\Squarep(#1){\ifnextchar|{\Squarepp(#1)}{\Squarepp(#1)|alrb|}}%
\def\Squarepp(#1)|#2|{\ifnextchar/{\Squareppp(#1)|#2|}%
    {\Squareppp(#1)|#2|/>`>`>`>/}}%
\def\Squareppp(#1)|#2|/#3`#4`#5`#6/{%
    \ifnextchar<{\Squarepppp(#1)|#2|/#3`#4`#5`#6/}%
    {\Squarepppp(#1)|#2|/#3`#4`#5`#6/<\default>}}%
\def\hsquarespppp(#1,#2)|#3|/#4/<#5>[#6;#7]{%
\Xpos=#1\Ypos=#2%
\def\next|##1##2##3##4##5##6##7|{%
 \def\Xa{##1}\def\Xb{##2}\def\Xc{##3}\def\Xd{##4}%
 \def\Xe{##5}\def\Xf{##6}\def\Xg{##7}}%
\next|#3|%
\def\next<##1,##2,##3>{\deltaX=##1\deltaXprime=##2\deltaY=##3}%
\next<#5>%
\def\next[##1`##2`##3`##4`##5`##6]{%
 \def\Nodea{##1}\def\Nodeb{##2}\def\Nodec{##3}%
 \def\Noded{##4}\def\Nodee{##5}\def\Nodef{##6}}%
\next[#6]%
\def\next[##1`##2`##3`##4`##5`##6`##7]{%
 \def\Labela{##1}\def\Labelb{##2}\def\Labelc{##3}\def\Labeld{##4}%
 \def\Labele{##5}\def\Labelf{##6}\def\Labelg{##7}}%
\next[#7]%
\dohsquares/#4/}%
\def\dohsquares/#1`#2`#3`#4`#5`#6`#7/{%
\squarepppp(\Xpos,\Ypos)|\Xa\Xc\Xd\Xf|/#1`#3`#4`#6/<\deltaX,\deltaY>%
 [\Nodea`\Nodeb`\Noded`\Nodee;\Labela`\Labelc`\Labeld`\Labelf]%
 \advance \Xpos by \deltaX%
\squarepppp(\Xpos,\Ypos)|\Xb\Xd\Xe\Xg|/#2``#5`#7/<\deltaXprime,\deltaY>%
[\Nodeb`\Nodec`\Nodee`\Nodef;\Labelb``\Labele`\Labelg]%
\ignorespaces}%
\def\hsquaresp(#1){\ifnextchar|{\hsquarespp(#1)}{\hsquarespp%
(#1)|aalmrbb|}}%
\def\hsquarespp(#1)|#2|{\ifnextchar/{\hsquaresppp(#1)|#2|}%
    {\hsquaresppp(#1)|#2|/>`>`>`>`>`>`>/}}%
\def\hsquaresppp(#1)|#2|/#3/{%
    \ifnextchar<{\hsquarespppp(#1)|#2|/#3/}%
    {\hsquarespppp(#1)|#2|/#3/<\default,\default,\default>}}%
\def\hSquarespppp(#1,#2)|#3|/#4/<#5>[#6;#7]{%
\Xpos=#1\Ypos=#2%
\def\next|##1##2##3##4##5##6##7|{%
 \def\Xa{##1}\def\Xb{##2}\def\Xc{##3}\def\Xd{##4}%
 \def\Xe{##5}\def\Xf{##6}\def\Xg{##7}}%
\next|#3|%
\deltaY=#5%
\def\next[##1`##2`##3`##4`##5`##6]{%
 \def\Nodea{##1}\def\Nodeb{##2}\def\Nodec{##3}%
 \def\Noded{##4}\def\Nodee{##5}\def\Nodef{##6}}%
\next[#6]%
\def\next[##1`##2`##3`##4`##5`##6`##7]{%
 \def\Labela{##1}\def\Labelb{##2}\def\Labelc{##3}\def\Labeld{##4}%
 \def\Labele{##5}\def\Labelf{##6}\def\Labelg{##7}}%
\next[#7]%
\dohSquares/#4/}%
\def\dohSquares/#1`#2`#3`#4`#5`#6`#7/{%
\Squarepppp(\Xpos,\Ypos)|\Xa\Xc\Xd\Xf|/#1`#3`#4`#6/<\deltaY>%
 [\Nodea`\Nodeb`\Noded`\Nodee;\Labela`\Labelc`\Labeld`\Labelf]%
 \advance \Xpos by \topw%
\Squarepppp(\Xpos,\Ypos)|\Xb\Xd\Xe\Xg|/#2``#5`#7/<\deltaY>%
[\Nodeb`\Nodec`\Nodee`\Nodef;\Labelb``\Labele`\Labelg]%
\ignorespaces}%
\def\hSquares{\ifnextchar({\hSquaresp}{\hSquaresp(0,0)}}%
\def\hSquaresp(#1){\ifnextchar|{\hSquarespp(#1)}{\hSquarespp%
(#1)|aalmrbb|}}%
\def\hSquarespp(#1)|#2|{\ifnextchar/{\hSquaresppp(#1)|#2|}%
    {\hSquaresppp(#1)|#2|/>`>`>`>`>`>`>/}}%
\def\hSquaresppp(#1)|#2|/#3/{%
    \ifnextchar<{\hSquarespppp(#1)|#2|/#3/}%
    {\hSquarespppp(#1)|#2|/#3/<\default>}}%
\def\vsquarespppp(#1,#2)|#3|/#4/<#5>[#6;#7]{%
\Xpos=#1\Ypos=#2%
\def\next|##1##2##3##4##5##6##7|{%
 \def\Xa{##1}\def\Xb{##2}\def\Xc{##3}\def\Xd{##4}%
 \def\Xe{##5}\def\Xf{##6}\def\Xg{##7}}%
\next|#3|%
\def\next<##1,##2,##3>{\deltaX=##1\deltaY=##2\deltaYprime=##3}%
\next<#5>%
\def\next[##1`##2`##3`##4`##5`##6]{%
 \def\Nodea{##1}\def\Nodeb{##2}\def\Nodec{##3}%
 \def\Noded{##4}\def\Nodee{##5}\def\Nodef{##6}}%
\next[#6]%
\def\next[##1`##2`##3`##4`##5`##6`##7]{%
 \def\Labela{##1}\def\Labelb{##2}\def\Labelc{##3}\def\Labeld{##4}%
 \def\Labele{##5}\def\Labelf{##6}\def\Labelg{##7}}%
\next[#7]%
\dovsquares/#4/}%
\def\dovsquares/#1`#2`#3`#4`#5`#6`#7/{%
\squarepppp(\Xpos,\Ypos)|\Xd\Xe\Xf\Xg|/`#5`#6`#7/<\deltaX,\deltaYprime>%
[\Nodec`\Noded`\Nodee`\Nodef;`\Labele`\Labelf`\Labelg]%
 \advance\Ypos by \deltaYprime%
\squarepppp(\Xpos,\Ypos)|\Xa\Xb\Xc\Xd|/#1`#2`#3`#4/<\deltaX,\deltaY>%
 [\Nodea`\Nodeb`\Nodec`\Noded;\Labela`\Labelb`\Labelc`\Labeld]%
\ignorespaces}%
\def\vsquaresp(#1){\ifnextchar|{\vsquarespp(#1)}{\vsquarespp%
(#1)|aalmrbb|}}%
\def\vsquarespp(#1)|#2|{\ifnextchar/{\vsquaresppp(#1)|#2|}%
    {\vsquaresppp(#1)|#2|/>`>`>`>`>`>`>/}}%
\def\vsquaresppp(#1)|#2|/#3/{%
    \ifnextchar<{\vsquarespppp(#1)|#2|/#3/}%
    {\vsquarespppp(#1)|#2|/#3/<\default,\default,\default>}}%
\def\vSquarespppp(#1,#2)|#3|/#4/<#5,#6>[#7;#8]{%
\Xpos=#1\Ypos=#2%
\def\next|##1##2##3##4##5##6##7|{%
 \def\Xa{##1}\def\Xb{##2}\def\Xc{##3}\def\Xd{##4}%
 \def\Xe{##5}\def\Xf{##6}\def\Xg{##7}}%
\next|#3|%
\deltaX=#5%
\deltaY=#6%
\def\next[##1`##2`##3`##4`##5`##6]{%
 \def\Nodea{##1}\def\Nodeb{##2}\def\Nodec{##3}%
 \def\Noded{##4}\def\Nodee{##5}\def\Nodef{##6}}%
\next[#7]%
\def\next[##1`##2`##3`##4`##5`##6`##7]{%
 \def\Labela{##1}\def\Labelb{##2}\def\Labelc{##3}\def\Labeld{##4}%
 \def\Labele{##5}\def\Labelf{##6}\def\Labelg{##7}}%
\next[#8]%
\dovSquares/#4/\ignorespaces}%
\def\dovSquares/#1`#2`#3`#4`#5`#6`#7/{%
\setmorphismwidth{\Nodea}{\Nodeb}{\Labela}{\topw}%
\setmorphismwidth{\Nodec}{\Noded}{\Labeld}{\botw}%
\ratchet{\topw}{\botw}%
\setmorphismwidth{\Nodee}{\Nodef}{\Labelg}{\botw}%
\ratchet{\topw}{\botw}%
\square(\Xpos,\Ypos)|\Xd\Xe\Xf\Xg|/`#5`#6`#7/<\topw,\deltaX>%
 [\Nodec`\Noded`\Nodee`\Nodef;`\Labele`\Labelf`\Labelg]%
\advance \Ypos by \deltaX%
\square(\Xpos,\Ypos)|\Xa\Xb\Xc\Xd|/#1`#2`#3`#4/<\topw,\deltaY>%
 [\Nodea`\Nodeb`\Nodec`\Noded;\Labela`\Labelb`\Labelc`\Labeld]%
}%
\def\vSquaresp(#1){\ifnextchar|{\vSquarespp(#1)}{\vSquarespp%
(#1)|alrmlrb|}}%
\def\vSquarespp(#1)|#2|{\ifnextchar/{\vSquaresppp(#1)|#2|}%
    {\vSquaresppp(#1)|#2|/>`>`>`>`>`>`>/}}%
\def\vSquaresppp(#1)|#2|/#3/{%
    \ifnextchar<{\vSquarespppp(#1)|#2|/#3/}%
    {\vSquarespppp(#1)|#2|/#3/<\default,\default>}}%
\def\osquarepppp(#1)|#2|/#3`#4`#5`#6/<#7>[#8]{\squarepppp%
 (#1)|#2|/#3`#4`#5`#6/<#7>[#8]%
 \let\Nodea\nodea\let\Nodeb\nodeb%
\let\Nodec\nodec\let\Noded\noded\Xpos=\xpos\Ypos=\ypos%
\deltaX=\deltax \deltaY=\deltay \isquare}%
\def\osquarep(#1){\ifnextchar|{\osquarepp(#1)}{\osquarepp(#1)|alrb|}}%
\def\osquarepp(#1)|#2|{\ifnextchar/{\osquareppp(#1)|#2|}%
    {\osquareppp(#1)|#2|/>`>`>`>/}}%
\def\osquareppp(#1)|#2|/#3`#4`#5`#6/{%
    \ifnextchar<{\osquarepppp(#1)|#2|/#3`#4`#5`#6/}%
    {\osquarepppp(#1)|#2|/#3`#4`#5`#6/<1500,1500>}}%
\def\isquarepppp(#1)|#2|/#3`#4`#5`#6/<#7>[#8]{%
 \squarepppp(#1)|#2|/#3`#4`#5`#6/<#7>[#8]%
\ifnextchar|{\cubep}{\cubep|mmmm|}}%
\def\cubep|#1|{\ifnextchar/{\cubepp|#1|}{\cubepp|#1|/>`>`>`>/}}%
\def\isquare{\ifnextchar({\isquarep}{\isquarep(\default,\default)}}%
\def\isquarep(#1){\ifnextchar|{\isquarepp(#1)}{\isquarepp(#1)|alrb|}}%
\def\isquarepp(#1)|#2|{\ifnextchar/{\isquareppp(#1)|#2|}%
    {\isquareppp(#1)|#2|/>`>`>`>/}}%
\def\isquareppp(#1)|#2|/#3`#4`#5`#6/{%
    \ifnextchar<{\isquarepppp(#1)|#2|/#3`#4`#5`#6/}%
    {\isquarepppp(#1)|#2|/#3`#4`#5`#6/<500,500>}}%
\def\cubepp|#1#2#3#4|/#5`#6`#7`#8/[#9]{%
\def\next[##1`##2`##3`##4]{\gdef\Labela{##1}%
\gdef\Labelb{##2}\gdef\Labelc{##3}\gdef\Labeld{##4}}\next[#9]%
\xend\xpos \yend\ypos%
\Xend\xend\advance\Xend by -\Xpos%
\Yend\yend\advance\Yend by -\Ypos%
\domorphism(\Xpos,\Ypos)|#2|/#6/<\Xend,\Yend>[\Nodeb`\nodeb;\Labelb]%
\advance\Xpos by-\deltaX%
\advance\xend by-\deltax%
\Xend\xend\advance\Xend by -\Xpos%
\domorphism(\Xpos,\Ypos)|#1|/#5/<\Xend,\Yend>[\Nodea`\nodea;\Labela]%
\advance\Ypos by-\deltaY%
\advance\yend by-\deltay%
\Yend\yend\advance\Yend by -\Ypos%
\domorphism(\Xpos,\Ypos)|#3|/#7/<\Xend,\Yend>[\Nodec`\nodec;\Labelc]%
\advance\Xpos by\deltaX%
\advance\xend by\deltax%
\Xend\xend\advance\Xend by -\Xpos%
\domorphism(\Xpos,\Ypos)|#4|/#8/<\Xend,\Yend>[\Noded`\noded;\Labeld]%
\ignorespaces}%
\def\setwdth#1#2{\setbox0\hbox{$\labelstyle#1$}\wdth=\wd0%
\setbox0\hbox{$\labelstyle#2$}\ifnum\wdth<\wd0 \wdth=\wd0 \fi}%
\def\topppp/#1/<#2>^#3_#4{\:%
\ifnum#2=0%
   \setwdth{#3}{#4}\deltax=\wdth \divide \deltax by \ul%
   \advance \deltax by \defaultmargin  \ratchet{\deltax}{100}%
\else \deltax #2%
\fi%
\xy\ar@{#1}^{#3}_{#4}(\deltax,0) \endxy%
\:}%
\def\toppp/#1/<#2>^#3{\ifnextchar_{\topppp/#1/<#2>^{#3}}{\topppp/#1/<#2>^{#3}_{}}}%
\def\topp/#1/<#2>{\ifnextchar^{\toppp/#1/<#2>}{\toppp/#1/<#2>^{}}}%
\def\toop/#1/{\ifnextchar<{\topp/#1/}{\topp/#1/<0>}}%
\def\to{\ifnextchar/{\toop}{\toop/>/}}%
\def\rlimto{{%
\font\xyatipfont=xyatip10 scaled 800
\font\xybtipfont=xybtip10 scaled 800
\raise 2pt\hbox{\,\xy\ar@{->}(100,0) \endxy}\,}}
\def\llimto{{%
\font\xyatipfont=xyatip10 scaled 800
\font\xybtipfont=xybtip10 scaled 800
\raise 2pt\hbox{\,\xy\ar@{<-}(100,0) \endxy}\,}}
\def\twopppp/#1`#2/<#3>^#4_#5{\:%
\ifnum0=#3%
  \setwdth{#4}{#5}\deltax=\wdth \divide \deltax by \ul \advance \deltax%
  by \defaultmargin \ratchet{\deltax}{200}%
\else \deltax#3 \fi%
\xy\ar@{#1}@<2.5pt>^{#4}(\deltax,0)%
\ar@{#2}@<-2.5pt>_{#5}(\deltax,0)\endxy\:}%
\def\twoppp/#1`#2/<#3>^#4{\ifnextchar_{\twopppp/#1`#2/<#3>^{#4}}%
  {\twopppp/#1`#2/<#3>^{#4}_{}}}%
\def\twopp/#1`#2/<#3>{\ifnextchar^{\twoppp/#1`#2/<#3>}{\twoppp/#1`#2/<#3>^{}}}%
\def\twop/#1`#2/{\ifnextchar<{\twopp/#1`#2/}{\twopp/#1`#2/<0>}}%
\def\two{\ifnextchar/{\twop}{\twop/>`>/}}%
\def\threeppppp/#1`#2`#3/<#4>^#5|#6_#7{\:%
\ifnum0=#4%
\setbox0\hbox{$\labelstyle#5$}\wdth=\wd0%
\setbox0\hbox{$\labelstyle#6$}\ifnum\wdth<\wd0 \wdth=\wd0 \fi%
\setbox0\hbox{$\labelstyle#7$}\ifnum\wdth<\wd0 \wdth=\wd0 \fi%
\deltax=\wdth \divide \deltax by \ul \advance \deltax by%
\defaultmargin \ratchet{\deltax}{300}%
\else\deltax#4 \fi%
    \xy \ifnum\wd0=0 \ar@{#2}(\deltax,0)%
    \else \ar@{#2}|{#6}(\deltax,0)\fi%
\ar@{#1}@<4.5pt>^{#5}(\deltax,0)%
\ar@{#3}@<-4.5pt>_{#7}(\deltax,0)\endxy\:}%
\def\threepppp/#1`#2`#3/<#4>^#5|#6{\ifnextchar_{\threeppppp%
  /#1`#2`#3/<#4>^{#5}|{#6}}{\threeppppp/#1`#2`#3/<#4>^{#5}|{#6}_{}}}%
\def\threeppp/#1`#2`#3/<#4>^#5{\ifnextchar|{\threepppp%
  /#1`#2`#3/<#4>^{#5}}{\threepppp/#1`#2`#3/<#4>^{#5}|{}}}%
\def\threepp/#1`#2`#3/<#4>{\ifnextchar^{\threeppp/#1`#2`#3/<#4>}%
  {\threeppp/#1`#2`#3/<#4>^{}}}%
\def\threep/#1`#2`#3/{\ifnextchar<{\threepp/#1`#2`#3/}%
  {\threepp/#1`#2`#3/<0>}}%
\def\twoar(#1,#2){{%
 \scalefactor{0.1}%
 \deltax#1\deltay#2%
 \deltaX=\ifnum\deltax<0-\fi\deltax%
 \deltaY=\ifnum\deltay<0-\fi\deltay%
 \Xend\deltax \multiply \Xend by \deltax%
 \Yend\deltay \multiply \Yend by \deltay%
 \advance\Xend by \Yend \multiply \Xend by 3%
 \ifnum \deltaX > \deltaY%
    \multiply \deltaX by 3 \advance \deltaX by \deltaY%
 \else%
    \multiply \deltaY by 3 \advance \deltaX by \deltaY%
 \fi%
 \multiply\deltax by 500%
 \multiply\deltay by 500%
 \xpos\deltax \multiply \xpos by 3 \divide\xpos by \deltaX%
 \Xpos\deltax \multiply \Xpos by \deltaX \divide \Xpos by \Xend%
 \advance \xpos by \Xpos%
 \ypos\deltay \multiply \ypos by 3 \divide\ypos by \deltaX%
 \Ypos\deltay \multiply \Ypos by \deltaX \divide \Ypos by \Xend%
 \advance \ypos by \Ypos%
 \xy \ar@{=>}(\xpos,\ypos) \endxy%
}\ignorespaces}%
\def\iiixiiipppppp(#1,#2)|#3|/#4/<#5>#6<#7>[#8;#9]{%
 \xpos#1\ypos#2\relax%
 \def\next|##1##2##3##4##5##6##7|{\def\xa{##1}\def\xb{##2}%
 \def\xc{##3}\def\xd{##4}\def\xe{##5}\def\xf{##6}\nextt|##7|}%
 \def\nextt|##1##2##3##4##5##6|{\def\xg{##1}\def\xh{##2}%
 \def\xi{##3}\def\xj{##4}\def\xk{##5}\def\xl{##6}}%
 \next|#3|%
 \def\next<##1,##2>{\deltax##1\deltay##2}%
 \next<#5>%
 \def\next<##1,##2>{\deltaX##1\deltaY##2}%
 \next<#7>%
 \def\next##1{\topw##1\relax%
 \ifodd\topw \def\za{}\else\def\za{\relax}\fi \divide\topw by 2
 \ifodd\topw \def\zb{}\else\def\zb{\relax}\fi \divide\topw by 2
 \ifodd\topw \def\zc{}\else\def\zc{\relax}\fi \divide\topw by 2
 \ifodd\topw \def\zd{}\else\def\zd{\relax}\fi \divide\topw by 2
 \ifodd\topw \def\ze{}\else\def\ze{\relax}\fi \divide\topw by 2
 \ifodd\topw \def\zf{}\else\def\zf{\relax}\fi \divide\topw by 2
 \ifodd\topw \def\zg{}\else\def\zg{\relax}\fi \divide\topw by 2
 \ifodd\topw \def\zh{}\else\def\zh{\relax}\fi \divide\topw by 2
 \ifodd\topw \def\zi{}\else\def\zi{\relax}\fi \divide\topw by 2
 \ifodd\topw \def\zj{}\else\def\zj{\relax}\fi \divide\topw by 2
 \ifodd\topw \def\zk{}\else\def\zk{\relax}\fi \divide\topw by 2
 \ifodd\topw \def\zl{}\else\def\zl{\relax}\fi}%
 \next{#6}%
 \def\next[##1`##2`##3`##4`##5`##6`##7`##8`##9]{%
 \def\nodeA{##1}\def\nodeB{##2}\def\nodeC{##3}%
 \def\nodeD{##4}\def\nodeE{##5}\def\nodeF{##6}%
 \def\nodeG{##7}\def\nodeH{##8}\def\nodeI{##9}}%
 \next[#8]%
 \def\next[##1`##2`##3`##4`##5`##6`##7]{%
 \def\labela{##1}\def\labelb{##2}\def\labelc{##3}%
 \def\labeld{##4}\def\labele{##5}\def\labelf{##6}\nextt[##7]}%
 \def\nextt[##1`##2`##3`##4`##5`##6]{%
 \def\labelg{##1}\def\labelh{##2}\def\labeli{##3}%
 \def\labelj{##4}\def\labelk{##5}\def\labell{##6}}%
 \next[#9]%
 \def\next/##1`##2`##3`##4`##5`##6`##7`##8/{%
 \advance\ypos\deltay
    \ifx\zf\empty \morphism(\xpos,\ypos)/<-/<-\deltaX,0>[\nodeD`0;]\fi
 \morphism(\xpos,\ypos)|\xf|/{##6}/<\deltax,0>[\nodeD`\nodeE;\labelf]%
    \advance \xpos\deltax
    \morphism(\xpos,\ypos)|\xg|/{##7}/<\deltax,0>[\nodeE`\nodeF;\labelg]%
    \ifx\zg\empty \advance\xpos \deltax
        \morphism(\xpos,\ypos)<\deltaX,0>[\nodeF`0;]\fi
    \xpos#1 \advance\ypos\deltay
    \ifx\zd\empty \morphism(\xpos,\ypos)/<-/<-\deltaX,0>[\nodeA`0;]\fi
    \ifx\za\empty \morphism(\xpos,\ypos)/<-/<0,\deltaY>[\nodeA`0;]\fi
    \morphism(\xpos,\ypos)|\xa|/{##1}/<\deltax,0>[\nodeA`\nodeB;\labela]%
 \morphism(\xpos,\ypos)|\xc|/{##3}/<0,-\deltay>[\nodeA`\nodeD;\labelc]%
    \advance \xpos\deltax
     \morphism(\xpos,\ypos)|\xb|/{##2}/<\deltax,0>[\nodeB`\nodeC;\labelb]%
     \morphism(\xpos,\ypos)|\xd|/{##4}/<0,-\deltay>[\nodeB`\nodeE;\labeld]%
     \ifx\zb\empty \morphism(\xpos,\ypos)/<-/<0,\deltaY>[\nodeB`0;]\fi
     \advance\xpos\deltax
 \morphism(\xpos,\ypos)|\xd|/{##5}/<0,-\deltay>[\nodeC`\nodeF;\labele]%
     \ifx\zc\empty \morphism(\xpos,\ypos)/<-/<0,\deltaY>[\nodeC`0;]\fi
     \ifx\ze\empty \morphism(\xpos,\ypos)<\deltaX,0>[\nodeC`0;]\fi
   \nextt/##8/}%
 \def\nextt/##1`##2`##3`##4`##5/{%
 \xpos#1\ypos#2\relax%
   \ifx\zh\empty \morphism(\xpos,\ypos)/<-/<-\deltaX,0>[\nodeG`0;]\fi
   \ifx\zj\empty \morphism(\xpos,\ypos)<0,-\deltaY>[\nodeG`0;]\fi
   \morphism(\xpos,\ypos)|\xk|/{##4}/<\deltax,0>[\nodeG`\nodeH;\labelk]%
   \advance\xpos\deltax
   \morphism(\xpos,\ypos)|\xl|/{##5}/<\deltax,0>[\nodeH`\nodeI;\labell]%
   \ifx\zk\empty \morphism(\xpos,\ypos)<0,-\deltaY>[\nodeH`0;]\fi
   \advance\xpos\deltax
   \ifx\zi\empty \morphism(\xpos,\ypos)<\deltaX,0>[\nodeI`0;]\fi
   \ifx\zl\empty \morphism(\xpos,\ypos)<0,-\deltaY>[\nodeI`0;]\fi
   \xpos#1 \advance\ypos\deltay
    \morphism(\xpos,\ypos)|\xh|/{##1}/<0,-\deltay>[\nodeD`\nodeG;\labelh]%
    \advance \xpos\deltax
    \morphism(\xpos,\ypos)|\xi|/{##2}/<0,-\deltay>[\nodeE`\nodeH;\labeli]%
    \advance \xpos\deltax
 \morphism(\xpos,\ypos)|\xj|/{##3}/<0,-\deltay>[\nodeF`\nodeI;\labelj]}%
 \next/#4/\ignorespaces}%
\def\iiixiiip(#1){\ifnextchar|{\iiixiiipp(#1)}%
  {\iiixiiipp(#1)|aalmrmmlmrbb|}}%
\def\iiixiiipp(#1)|#2|{\ifnextchar/{\iiixiiippp(#1)|#2|}%
    {\iiixiiippp(#1)|#2|/>`>`>`>`>`>`>`>`>`>`>`>/}}%
\def\iiixiiippp(#1)|#2|/#3/{%
    \ifnextchar<{\iiixiiipppp(#1)|#2|/#3/}%
    {\iiixiiipppp(#1)|#2|/#3/<\default,\default>}}%
\def\iiixiiipppp(#1)|#2|/#3/<#4>{\ifnextchar[{\iiixiiippppp(#1)|#2|/#3/%
   <#4>0<0,0>}{\iiixiiippppp(#1)|#2|/#3/<#4>}}%
\def\iiixiiippppp(#1)|#2|/#3/<#4>#5{\ifnextchar<%
   {\iiixiiipppppp(#1)|#2|/#3/<#4>{#5}}%
   {\iiixiiipppppp(#1)|#2|/#3/<#4>{#5}<400,400>}}%
\def\iiixiipppppp(#1,#2)|#3|/#4/<#5>#6<#7>[#8;#9]{%
 \xpos#1\ypos#2\relax%
 \def\next|##1##2##3##4##5##6##7|{\def\xa{##1}\def\xb{##2}%
 \def\xc{##3}\def\xd{##4}\def\xe{##5}\def\xf{##6}\def\xg{##7}}%
 \next|#3|%
 \def\next<##1,##2>{\deltax##1\deltay##2}%
 \next<#5>%
 \deltaX#7
 \topw#6
 \def\next{%
 \ifodd\topw \def\za{}\else\def\za{\relax}\fi \divide\topw by 2
 \ifodd\topw \def\zb{}\else\def\zb{\relax}\fi \divide\topw by 2
 \ifodd\topw \def\zc{}\else\def\zc{\relax}\fi \divide\topw by 2
 \ifodd\topw \def\zd{}\else\def\zd{\relax}\fi}%
 \next%
 \def\next[##1`##2`##3`##4`##5`##6]{%
 \def\nodea{##1}\def\nodeb{##2}\def\nodec{##3}%
 \def\noded{##4}\def\nodee{##5}\def\nodef{##6}}%
 \next[#8]%
 \def\next[##1`##2`##3`##4`##5`##6`##7]{%
 \def\labela{##1}\def\labelb{##2}\def\labelc{##3}%
 \def\labeld{##4}\def\labele{##5}\def\labelf{##6}\def\labelg{##7}}%
 \next[#9]%
 \def\next/##1`##2`##3`##4`##5`##6`##7/{%
 {\ifx\zc\empty\advance\xpos -\deltaX
\relax\morphism(\xpos,\ypos)<\deltaX,0>[0`\noded;]\fi}%
 \morphism(\xpos,\ypos)|\xf|/##6/<\deltax,0>[\noded`\nodee;\labelf]%
 \advance\xpos by \deltax%
 \morphism(\xpos,\ypos)|\xg|/##7/<\deltax,0>[\nodee`\nodef;\labelg]%
 {\ifx\zd\empty \advance\xpos by \deltax
\relax  \morphism(\xpos,\ypos)<\deltaX,0>[\nodef`0;]\fi}%
 \advance\xpos by -\deltax  \advance\ypos by \deltay
 {\ifx\za\empty\advance \xpos by -\deltaX
\relax\morphism(\xpos,\ypos)<\deltaX,0>[0`\nodea;]\fi}%
 \morphism(\xpos,\ypos)|\xa|/##1/<\deltax,0>[\nodea`\nodeb;\labela]%
 \morphism(\xpos,\ypos)|\xc|/##3/<0,-\deltay>[\nodea`\noded;\labelc]%
 \advance\xpos by \deltax%
 \morphism(\xpos,\ypos)|\xb|/##2/<\deltax,0>[\nodeb`\nodec;\labelb]%
 \morphism(\xpos,\ypos)|\xd|/##4/<0,-\deltay>[\nodeb`\nodee;\labeld]%
 \advance\xpos by \deltax%
 \morphism(\xpos,\ypos)|\xe|/##5/<0,-\deltay>[\nodec`\nodef;\labele]%
 \ifx\zb\empty\relax \morphism(\xpos,\ypos)<\deltaX,0>[\nodec`0;]\fi}%
 \next/#4/\ignorespaces}%
\def\iiixiip(#1){\ifnextchar|{\iiixiipp(#1)}%
  {\iiixiipp(#1)|aalmrbb|}}%
\def\iiixiipp(#1)|#2|{\ifnextchar/{\iiixiippp(#1)|#2|}%
    {\iiixiippp(#1)|#2|/>`>`>`>`>`>`>/}}%
\def\iiixiippp(#1)|#2|/#3/{%
    \ifnextchar<{\iiixiipppp(#1)|#2|/#3/}%
    {\iiixiipppp(#1)|#2|/#3/<\default,\default>}}%
\def\iiixiipppp(#1)|#2|/#3/<#4>{\ifnextchar[{\iiixiippppp(#1)|#2|/#3/%
   <#4>{0}<0>}{\iiixiippppp(#1)|#2|/#3/<#4>}}%
\def\iiixiippppp(#1)|#2|/#3/<#4>#5{\ifnextchar<%
   {\iiixiipppppp(#1)|#2|/#3/<#4>{#5}}%
   {\iiixiipppppp(#1)|#2|/#3/<#4>{#5}<400>}}%
\def\node#1(#2,#3)[#4]{%
\expandafter\gdef\csname x@#1\endcsname{#2}%
\expandafter\gdef\csname y@#1\endcsname{#3}%
\expandafter\gdef\csname ob@#1\endcsname{#4}%
\ignorespaces}%
\def\arrow{\ifnextchar|{\arrowp}{\arrowp|a|}}%
\def\arrowp|#1|{\ifnextchar/{\arrowpp|#1|}{\arrowpp|#1|/>/}}%
\def\arrowpp|#1|/#2/[#3`#4;#5]{%
\xfinish=\csname x@#4\endcsname%
\yfinish=\csname y@#4\endcsname%
\advance\xfinish by -\csname x@#3\endcsname%
\advance\yfinish by -\csname y@#3\endcsname%
\morphism(\csname x@#3\endcsname,\csname y@#3\endcsname)|#1|/{#2}/%
<\xfinish,\yfinish>[\csname ob@#3\endcsname`\csname ob@#4\endcsname;#5]%
}%
\def\Loop(#1,#2)#3(#4,#5){\POS(#1,#2)*+!!<0ex,\axis>{#3}\ar@(#4,#5)}%
\def\iloop#1(#2,#3){\xy\Loop(0,0)#1(#2,#3)\endxy}%
     \let \PATHafterPOS\PATHafterPOS@default%
     \let \arsavedPATHafterPOS@@\relax%
     \let\afterar@@\relax%
\xydef@\endxyobj{\if\inxy@\else\xyerror@{Unexpected \string\endxy}{}\fi%
>  \relax%
>   \dimen@=\Y@max \advance\dimen@-\Y@min%
>   \ifdim\dimen@<\z@ \dimen@=\z@ \Y@min=\z@ \Y@max=\z@ \fi%
>   \dimen@=\X@max \advance\dimen@-\X@min%
>   \ifdim\dimen@<\z@ \dimen@=\z@ \X@min=\z@ \X@max=\z@ \fi%
>   \edef\tmp@{\egroup%
>     \setboxz@h{\kern-\the\X@min \boxz@}%
>     \ht\z@=\the\Y@max \dp\z@=-\the\Y@min \wdz@=\the\dimen@%
>     \noexpand\maybeunraise@ \raise\dimen@\boxz@%
>     \noexpand\recoverXyStyle@ \egroup \noexpand\xy@end%
>     \U@c=\the\Y@max \advance\U@c-\the\Y@c%
>     \D@c=-\the\Y@min \advance\D@c\the\Y@c%
>     \L@c=-\the\X@min  \advance\L@c\the\X@c%
>     \R@c=\the\X@max  \advance\R@c-\the\X@c%
>    }\tmp@}%
\gdef\xymerge@MinMax{}%
\xydef@\twocell{\hbox\bgroup\xysave@MinMax\@twocell}%
\xydef@\uppertwocell{\hbox\bgroup\xysave@MinMax\@uppertwocell}%
\xydef@\lowertwocell{\hbox\bgroup\xysave@MinMax\@lowertwocell}%
\xydef@\compositemap{\hbox\bgroup\xysave@MinMax\@compositemap}%
\xydef@\xysave@MinMax{\xdef\xymerge@MinMax{%
   \noexpand\ifdim\X@max<\the\X@max \X@max=\the\X@max\noexpand\fi%
   \noexpand\ifdim\X@min>\the\X@min \X@min=\the\X@min\noexpand\fi%
   \noexpand\ifdim\Y@max<\the\Y@max \Y@max=\the\Y@max\noexpand\fi%
   \noexpand\ifdim\Y@min>\the\Y@min \Y@min=\the\Y@min\noexpand\fi%
  }}%
\xydef@\drop@Twocell{\boxz@ \xymerge@MinMax}%
\xydef@\twocell@DONE{%
  \edef\tmp@{\egroup%
   \X@min=\the\X@min \X@max=\the\X@max%
   \Y@min=\the\Y@min \Y@max=\the\Y@max}\tmp@%
  \L@c=\X@c \advance\L@c-\X@min \R@c=\X@max \advance\R@c-\X@c%
  \D@c=\Y@c \advance\D@c-\Y@min \U@c=\Y@max \advance\U@c-\Y@c%
  \ht\z@=\U@c \dp\z@=\D@c \dimen@=\L@c \advance\dimen@\R@c \wdz@=\dimen@%
  \computeLeftUpness@%
  \setboxz@h{\kern-\X@p \raise-\Y@c\boxz@ }%
  \dimen@=\L@c \advance\dimen@\R@c \wdz@=\dimen@ \ht\z@=\U@c \dp\z@=\D@c%
  \Edge@c={\rectangleEdge}\Invisible@false \Hidden@false%
  \edef\Drop@@{\noexpand\drop@Twocell%
   \noexpand\def\noexpand\Leftness@{\Leftness@}%
   \noexpand\def\noexpand\Upness@{\Upness@}}%
  \edef\Connect@@{\noexpand\connect@Twocell%
   \noexpand\ifdim\X@max<\the\X@max \X@max=\the\X@max\noexpand\fi%
   \noexpand\ifdim\X@min>\the\X@min \X@min=\the\X@min\noexpand\fi%
   \noexpand\ifdim\Y@max<\the\Y@max \Y@max=\the\Y@max\noexpand\fi%
   \noexpand\ifdim\Y@min>\the\Y@min \Y@min=\the\Y@min\noexpand\fi }%
  \xymerge@MinMax%
}%
\newtheorem{theorem}{Theorem}[section]
\newtheorem{lemma}[theorem]{Lemma}
\newtheorem{corollary}[theorem]{Corollary}
\newtheorem{proposition}[theorem]{Proposition}
\theoremstyle{definition}
\newtheorem{definition}[theorem]{Definition}
\newtheorem{notation}[theorem]{Notation}
\newtheorem{observation}[theorem]{Observation}
\newcommand{\op}{{\operatorname{op}}}
\newcommand{\Obj}{\operatorname{Ob}} 
\newcommand{\Id}{\mathrm{Id}}
\newcommand{\SuchThat}{\mathrm{\;\colon\:}}
\newcommand{\Slot}{\mathbin{\,\underline{\phantom{x}}\,}} 
\newcommand{\FiberOver}{\ast}
\newcommand{\CatName}[1]{#1}
\newcommand{\Forest}{\CatName{Forest}}
\newcommand{\Tree}{\CatName{Tree}}
\newcommand{\GlobDomain}{\mathbf{G}}
\newcommand{\GlobCard}{\CatName{GlobCard}}
\newcommand{\OGraph}{\CatName{OGraph}}
\newcommand{\Graph}{\CatName{Graph}}
\newcommand{\Pidi}{{t\Delta_+}}
\newcommand{\Disc}{{t\mathcal{I}_+}}
\newcommand{\iPidi}{{i\Delta_+}}
\newcommand{\iDisc}{{i\mathcal{I}_+}} 
\newcommand{\PidiNT}{{t\Delta}}
\newcommand{\DiscNT}{{t\mathcal{I}}}
\newcommand{\iPidiNT}{{i\Delta}}
\newcommand{\iDiscNT}{{i\mathcal{I}}}
\newcommand{\DiscToiDiscFunctor}{\Xi_{\mathcal{I}}}
\newcommand{\PidiToiPidiFunctor}{\Xi_{\Delta}}
\newcommand{\FunctorGlobCardToOGraph}{\Gamma}
\newcommand{\FunctorOGraphToGlobCard}{\Gamma'}
\newcommand{\ObjMapiPidiToOGraph}{\Upsilon}
\newcommand{\GCrel}{\blacktriangleleft}
\newcommand{\ExceptFirst}[1]
	{#1\thinspace\backslash\mathrm{f}}
\newcommand{\Pred}[1]{\mathrm{p} #1}
\newcommand{\Suspend}{\operatorname{su}}
\newcommand{\Restrict}{\CatName{Con}\thinspace}
\newcommand{\TwoCommaCat}
	{\CatName{Cat}/_{\CatName{Set}}}
\newcommand{\CompThing}[2]{#1[#2]}
\newcommand{\DeComp}[2]{\circ_{#2} \CompThing{#1}{\Pred{#2},#2}}
\newcommand{\nComp}[2]
	{\thinspace\stackrel{#1}{\circ}_{#2}\thinspace}
\newcommand{\ZeroComp}[1]
	{\circ_{#1}^{\phantom{#1}}}
\newcommand{\SubThing}[2]{#1(#2)}
\newcommand{\SubThingText}[2]
	{{restriction} of $#1$ by $#2$}
\newcommand{\SuspendThing}[2]{\Suspend(#1,#2)}
\newcommand{\SuspendThingText}[2]
	{{suspension} of $#1$ over $#2$}
\newcommand{\Disks}{\CatName{Disk}}
\newcommand{\DisksA}{\CatName{Disk}_+}
\newcommand{\Textendpoint}{end point}
\newcommand{\Textendelement}{end element}
\newcommand{\FreeFunctor}{\mathfrak{F}}
\newcommand{\ForgetfulFunctor}{\mathfrak{U}}
\newcommand{\FreeIsom}{{L}\thinspace}
\newcommand{\nCat}[1]{#1\text{-}\CatName{Cat}}
\newcommand{\wCat}{\omega\CatName{Cat}}
\newcommand{\TextMinusOne}{~$\text{-1}$}
\newcommand{\TextZero}{~$\text{0}$ (zero)}
\newcommand{\TextCategory}[1]
	{\MathText{#1}{-category}}
\newcommand{\TextCategories}[1]
	{\MathText{#1}{-categories}}
\newcommand{\TextFunctor}[1]
	{\MathText{#1}{-functor}}
\newcommand{\TextFunctors}[1]
	{\MathText{#1}{-functors}}
\newcommand{\MathText}[2]{$#1$#2}
\newcommand{\GCsrc}[1]{\mathrm{s}_{#1}\thinspace}
\newcommand{\GCtgt}[1]{\mathrm{t}_{#1}\thinspace}
\newcommand{\GCdom}[1]{\mathrm{dom}_{#1}\thinspace}
\newcommand{\GCcod}[1]{\mathrm{cod}_{#1}\thinspace}
\newcommand{\Root}[1]{\Obj #1}
\newcommand{\X}{{\phantom{X}}}
\title{On the duality between trees and disks}
\author{David Oury}
\address{Macquarie University\\New South Wales 2109\\Australia}
\email{david.oury@mq.edu.au}
\date{\today}
\thanks{The author wishes to thank Macquarie University for financial support (MQRES scholarship and travel grants to Northwestern University May 2009 and University of Adelaide June 2009).}
\subjclass[2000]{18D05, 18D20, 18D35}
\begin{document}
\maketitle
\tableofcontents


\begin{abstract}
A combinatorial category $\Disks$ was introduced by Andr\'{e} Joyal to play a role in his definition of weak \TextCategory{\omega}. 
He defined the category ~$\Theta$ to be dual to $Disks$. In the ensuing literature, a more concrete description of ~$\Theta$ was provided. 
In this paper we provide another proof of the dual equivalence and introduce various 
categories equivalent to $\Disks$ or ~$\Theta$, each providing a helpful viewpoint. 
\end{abstract}

	\section{Introduction}
	
Andr\'{e} Joyal in order to define weak ~$n\text{-categories}$ introduced ~$\Theta$ which was naturally filtered with the simplicial category ~$\Delta$ being the first term of the filtration. %
In \cite{AJ_DDTC} he defined ~$\Theta$ as the dual of a category $\Disks$ of disks. %
He also suggested a more explicit description of ~$\Theta$ involving the trees of Michael Batanin in \cite{MB_UPMT}. %
Michael Makkai and Marek Zawadowski in \cite{MZ_DSCD} and Clemens Berger in \cite{CB_CNHC} gave explicit proofs that the two version of ~$\Theta$ are equivalent. %
In this paper we give a third proof which is a conceptual lifting of the duality between ordinals and intervals. %
In the process, several categories are introduced, each turning out to be equivalent to ~$\Theta$ or ~$\Disks$, and so each providing us with useful new perspectives on Joyal's definitions. %

The category $\Delta_+$ is known as the augmented 
simplicial category and contains a single object (the 
empty ordinal) in addition to those of $\Delta$. 
Primarily, we work with \emph{augmented}
categories which contain a unique \emph{trivial} 
object and are more suitable for inductive arguments. %
Four of these augmented categories have reduced
counterparts which are
equivalent to the categories ~$\Disks$ or ~$\Theta$. %

In Section \ref{sec: ord int} we recall the definitions of ordinals and intervals and define functors which witness that they are dual. %
The section ends with two simple results which are used 
in the proof of Theorem \ref{thm: 222}. %
In Section \ref{sec: induction} we define augmented
categories ~$\iDisc$ and ~$\iPidi$, inductively built 
from ~$\mathcal{I}_+$ and ~$\Delta_+$ (respectively), 
and prove that they are dual. %
Their reduced counterparts 
are denoted $\iDiscNT$ and $\iPidiNT$ (respectively). %
In Section \ref{sec: equivalences} we recall the definition of Joyal's category ~ $\Disks$ and demonstrate an equivalence between $\Disks$ and ~$\iDiscNT$. 

In Section \ref{sec: globular cardinals} we recall Street's
definition of globular cardinal and define restriction and suspension operations on them. %
In Section \ref{sec: ordinal graphs} we define so called ordinal graphs which are inductively defined counterparts to globular cardinals and demonstrate an equivalence between the category of globular cardinals and the category of ordinal graphs. %
In Section \ref{sec: omega cats} we recall the definition of \TextCategory{\omega} and define free functors on globular cardinals and on ordinal graphs. %
We then demonstrate that the free \TextCategory{\omega} on a globular cardinal is isomorphic to the free \TextCategory{\omega} on the corresponding ordinal graph. %
The objects of $\Theta$ are, by definition, the free $\omega\text{-categories}$ on globular cardinals. %
In Section \ref{section: iPidi and theta} we demonstrate
an equivalence between ~ $\Theta$ and the inductively defined category $\iPidiNT$. 

In the remaining two sections 
we provide a more categorical description 
of the categories $\Disks$ and $\Theta$ by defining so called labeled trees which satisfy specific requirements relevant to our purposes. %
Two categories of labeled trees, named $\Disc$ 
and $\Pidi$, are defined and easily shown to be dual. %
Their reduced counterparts are shown to be equivalent 
to the categories $\Disks$ and ~ $\Theta$ (respectively). %

Restriction and suspension operations are defined on labeled trees with the goal of working with them inductively and constructing equivalences between
these two categories of labeled trees and
their inductively defined counterparts. %
The proofs that ~$\iDisc$ is equivalent to ~$\Disc$ 
(Proposition \ref{prop: Disc to iDisc}) and 
that ~$\iPidi$ is equivalent to ~$\Pidi$ 
(Proposition \ref{prop: Pidi to iPidi}) are essentially 
the same; however, we give all the details for clarity. 

A disk of dimension ~$\le{N}$ is defined in \cite{AJ_DDTC} as a \emph{sequence of length ~$N$ of bundles of intervals} with extra conditions. %
If we had been dealing with families, rather than those bundles, of intervals we could have made use of known properties of the finite coproduct completion functor ~${Fam}_\Sigma$. %
In particular, if we have an equivalence 
~$\mathcal{A}^\op\simeq\mathcal{B}$, it lifts to an
equivalence ~$
	{Fam}_\Sigma(\mathcal{A})^\op
	\simeq
	{Fam}_\Pi(\mathcal{B})$ where ~${Fam}_\Pi
$ is the finite product completion functor. %
Our replacements ~$\DiscNT$ and ~$\PidiNT$ for ~$\Disks$ and ~$\Theta$ are modifications of ~${Fam}_\Sigma(\mathcal{I}_+)$ and ~${Fam}_\Pi(\Delta_+)$. %
Instead of finite families we have labeled trees. %



	\section{The ordinal/interval duality}

	\label{sec: ord int}

Let $\CatName{Ord}$ be the sub-2-category of $\CatName{Cat}$ consisting of the ordered sets. %
A full subcategory ~$\Delta_+$ of ~$\CatName{Ord}$, 
called \emph{the algebraist's ~$\Delta$},  
has objects ~$[n]=\{0,..,n\}$ for ~$n$ 
in ~$\{-1, 0, ...\}$ 
where ~$[-1]=\{\}$. %
The category $\Delta_+$ is monoidal with tensor
given by ordinal addition, denoted $+$, and unit 
object given by the ordinal $[-1]$. %
The category ~$\mathcal{I}_+$ is the sub-category of ~$\Delta_+$ whose objects, called \emph{intervals}, are non-empty ordinals and whose morphisms preserve 
the greatest and least elements. %
Collectively, the greatest and least elements of $[n]$ 
are its \emph{\Textendpoint{s}}. %

A morphism in $\CatName{Ord}$ whose domain is a complete
lattice has a left adjoint if and only if it preserves
the least upper bound and has right adjoint if and only 
if it preserves the greatest lower bound. 
In particular, a morphism $
	\gamma\colon[m]\rightarrow[n]
$ has left adjoint $
	\gamma^\ell\colon[n]\rightarrow[m]
$ in $\CatName{Ord}$ if and only if it preserves the
largest element and has right adjoint $
	\gamma^r\colon[n]\rightarrow[m]
$ if and only if it preserves the smallest
element. 
The formula for $\gamma^\ell$ is \[
	\gamma^\ell(j) 
=	\min\{i\in[m]\SuchThat{j}\le\gamma(i)\}
\]
and for $\gamma^r$
is \[
	\gamma^r(j) 
=	\max\{i\in[m]\SuchThat\gamma(i)\le{j}\}. %
\]
Notice that $\gamma^\ell(j)=0$ if and only 
if $j=0$ and $\gamma^r(j)=m$ if and only 
if $j=n$. %
Given an interval map $
	f\colon[m]\rightarrow[n]
$ we have the commutative square
\[\bfig
	\square(0,0)|alrb|/->`->`->`{->}/<1000,400>%
	[\lbrack n-1 \rbrack
	`\lbrack m-1 \rbrack
	`\lbrack n \rbrack
	`\lbrack m \rbrack
	;f^\vee = f^r - \lbrack 0\rbrack
	`\partial_n
	`\partial_m
	`f^r
	]
\efig\]
defining $f^\vee$ in $\CatName{Ord}$. %
Given an ordinal map $
	g\colon[m]\rightarrow[n]
$ we have the commutative square
\[\bfig
	\square(0,0)|alrb|/{<-}`<-`<-`<-/<1000,400>%
	[\lbrack n+1 \rbrack
	`\lbrack m+1 \rbrack
	`\lbrack n \rbrack
	`\lbrack m \rbrack
	;g+\lbrack 0 \rbrack
	`\partial_{n+1}
	`\partial_{m+1}
	`g
	]
\efig\]
and define $g^\wedge$ as $
	(g+\lbrack 0 \rbrack)^\ell
$ in ${Ord}$. %

	\begin{theorem} \label{theorem: ord/int iso}
	
The functor \[
	(\Slot)^\vee
	\colon\mathcal{I}_+^\op 
	\rightarrow\Delta_+
\] which is defined by $
	[m]^\vee = [m-1]
$ and $
	\gamma^\vee
$ as above is an isomorphism of categories with
inverse ~$
	(\Slot)^\wedge
	\colon\Delta_+^\op 
	\rightarrow\mathcal{I}_+
$ defined by ~$
	[m]^\wedge = [m+1]
$ and $
	\gamma^\wedge
$ as above. %
\end{theorem}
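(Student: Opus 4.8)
The plan is to verify that $(\Slot)^\vee$ and $(\Slot)^\wedge$ are mutually inverse as assignments on objects and on arrows, and separately that $(\Slot)^\wedge$ is a functor; since a functor that is bijective on objects and arrows is an isomorphism whose inverse assignment is then automatically a functor, this suffices (and exhibits $(\Slot)^\vee$ as that inverse). On objects the two assignments are patently inverse, $([m]^\vee)^\wedge=[m-1]^\wedge=[m]$ and $([n]^\wedge)^\vee=[n+1]^\vee=[n]$, so all the content is in the arrows. I would carry this out using three facts about the order-preserving maps in $\CatName{Ord}$: that $\Slot+[0]$ is functorial, so $(hg)+[0]=(h+[0])(g+[0])$; that passage to left adjoints is contravariant, $(\gamma\delta)^\ell=\delta^\ell\gamma^\ell$; and that left and right adjoints are two-sided inverse operations, so $(\gamma^r)^\ell=\gamma$ whenever $\gamma$ preserves the least element and $(\gamma^\ell)^r=\gamma$ whenever $\gamma$ preserves the greatest.

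First I would confirm the two assignments are well defined. That $g^\wedge=(g+[0])^\ell$ lies in $\mathcal{I}_+$ follows because a left adjoint preserves the least element while, since $g+[0]$ takes the value $n+1$ only at the top element $m+1$, the formula $\gamma^\ell(j)=\min\{i\SuchThat j\le\gamma(i)\}$ forces $g^\wedge(n+1)=m+1$; thus $g^\wedge$ preserves both endpoints. The same endpoint fact $\gamma^r(j)=m\iff j=n$ shows $f^\vee=f^r-[0]$ is well defined, i.e.\ that the restriction of $f^r$ to $[n-1]$ really maps into $[m-1]$. Functoriality of $(\Slot)^\wedge\colon\Delta_+^\op\rightarrow\mathcal{I}_+$ is then immediate: it sends $\Id_{[m]}$ to $(\Id_{[m+1]})^\ell=\Id_{[m+1]}$, and for $g\colon[m]\rightarrow[n]$, $h\colon[n]\rightarrow[p]$ it gives $(hg)^\wedge=\bigl((h+[0])(g+[0])\bigr)^\ell=(g+[0])^\ell(h+[0])^\ell=g^\wedge h^\wedge$.

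The heart of the argument is that the two composites are the identity on arrows. For an interval map $f\colon[m]\rightarrow[n]$ the key point is that extending the restriction $f^\vee=f^r-[0]$ back up by $\Slot+[0]$ recovers $f^r$ exactly; this uses precisely that $f^r(n)=m$ while $f^r(j)<m$ for $j<n$, so no information is lost in restricting. Hence $(f^\vee)^\wedge=(f^\vee+[0])^\ell=(f^r)^\ell=f$, the last step because $f$, preserving the least element, is the left adjoint of its own right adjoint. Dually, for an ordinal map $g\colon[m]\rightarrow[n]$ the right adjoint of $g^\wedge=(g+[0])^\ell$ is again $g+[0]$, so $(g^\wedge)^\vee=(g^\wedge)^r-[0]=(g+[0])-[0]=g$.

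I expect the $\pm[0]$ bookkeeping to be the only real obstacle: everything reduces to the two identities $f^\vee+[0]=f^r$ and $(g^\wedge)^r=g+[0]$, and verifying these rests squarely on the endpoint characterizations $\gamma^\ell(j)=0\iff j=0$ and $\gamma^r(j)=m\iff j=n$ recorded before the theorem. With those two identities in hand the remaining manipulations are formal adjunction algebra.
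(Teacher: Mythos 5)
Your proof is correct, and it supplies something the paper does not: Theorem \ref{theorem: ord/int iso} is stated in the paper with no proof at all, being quoted as the classical duality between intervals and ordinals, so there is no argument of the paper's to compare yours against. What can be said is that yours is the proof the paper's preparation points toward: it uses exactly the material assembled immediately before the theorem (the adjoint formulas, the endpoint facts that $\gamma^\ell(j)=0$ iff $j=0$ and $\gamma^r(j)=m$ iff $j=n$, and the defining squares for $f^\vee$ and $g^\wedge$), together with uniqueness of adjoints between posets. Your two pivot identities are indeed where all the content lies: $f^\vee+[0]=f^r$ holds because an interval map $f$ preserves the top, so $f^r(n)=m$ while $f^r(j)<m$ for $j<n$ (this is the paper's endpoint observation, which is valid precisely under the both-endpoints hypothesis satisfied by interval maps, and by maps of the form $g+[0]$); and $(g^\wedge)^r=g+[0]$ is pure uniqueness of adjoints, since $g^\wedge=(g+[0])^\ell$ means $g^\wedge\dashv g+[0]$. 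The organizational shortcut of verifying functoriality only for $(\Slot)^\wedge$ and then deducing that the inverse assignment $(\Slot)^\vee$ is automatically a functor, because the inverse of a functor bijective on objects and arrows preserves identities and composition, is also sound, modulo the routine bookkeeping between a functor $\Delta_+^\op\rightarrow\mathcal{I}_+$ and its mate $\mathcal{I}_+^\op\rightarrow\Delta_+$, which the theorem statement itself elides in the same way.
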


	\begin{observation} \label{obs: the pin}
	
Let ~$\gamma\colon[m]\rightarrow[n]$ be 
an ordinal morphism. %
The fiber of ~$\gamma^\wedge$ over ~$j$ is
\begin{align*}
\{\gamma(j-1)+1, \gamma(j-1)+2, \ldots,\gamma (j) \}.
\end{align*}
This fact is used in Theorem \ref{thm: 222}. %
\end{observation}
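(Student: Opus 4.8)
The plan is to unwind the definition $\gamma^\wedge = (\gamma + [0])^\ell$ and read the fiber off directly from the explicit left-adjoint formula recorded before Theorem~\ref{theorem: ord/int iso}. I would write $h = \gamma + [0]\colon [m+1]\rightarrow[n+1]$ for the ordinal map appearing in the square that defines $g^\wedge$; concretely $h(i)=\gamma(i)$ for $0\le i\le m$ and $h(m+1)=n+1$. Since $h$ preserves the largest element, its left adjoint exists, and $\gamma^\wedge = h^\ell\colon[n+1]\rightarrow[m+1]$ is given by $\gamma^\wedge(k)=\min\{\,i\in[m+1]\SuchThat k\le h(i)\,\}$.

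First I would invoke the adjunction $h^\ell\dashv h$ in $\CatName{Ord}$, which supplies the defining equivalence $\gamma^\wedge(k)\le i \iff k\le h(i)$ for all $k\in[n+1]$ and $i\in[m+1]$. Because $[m+1]$ and $[n+1]$ are finite chains, this says that $\gamma^\wedge(k)=j$ precisely when $j$ is the least index with $k\le h(j)$, that is, when $h(j-1)<k\le h(j)$ (the left inequality being vacuous at $j=0$). Consequently the fiber $(\gamma^\wedge)^{-1}(j)$ is the integer interval $\{\,h(j-1)+1,\,h(j-1)+2,\,\ldots,\,h(j)\,\}$.

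It then remains only to substitute the values of $h$. For $j$ in the range where $h(j-1)=\gamma(j-1)$ and $h(j)=\gamma(j)$ (that is, away from the appended top point), this interval is exactly $\{\gamma(j-1)+1,\gamma(j-1)+2,\ldots,\gamma(j)\}$, which is the asserted fiber.

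The mathematical content here is entirely the monotone Galois-connection formula, so I do not expect a genuine obstacle. The only point demanding care will be the bookkeeping of the extra element contributed by the summand $[0]$ together with the boundary values of the index $j$; the degenerate cases (at $j=0$ and at the top of the chain) are checked by hand against the formula for $h^\ell$ to confirm that the substitution $h=\gamma$ is legitimate wherever $\gamma(j)$ is defined.
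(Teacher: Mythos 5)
Your proposal is correct: the paper states this as an observation with no proof at all, and your argument --- unwinding $\gamma^\wedge = (\gamma+[0])^\ell$ and reading the fiber off the Galois-connection formula $\gamma^\wedge(k)\le i \iff k\le(\gamma+[0])(i)$ --- is exactly the justification the paper's preceding definitions intend. Your attention to the boundary cases ($j=0$ and the appended top point $m+1$, where the fiber is $\{0,\ldots,\gamma(0)\}$ and $\{\gamma(m)+1,\ldots,n+1\}$ respectively) is a welcome precision that the paper's informal statement glosses over.
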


	\begin{observation}

	\label{obs: outside sent to endpoints}	
	
For an ordinal map $\gamma\colon[m]\rightarrow[n]$ then
$\gamma^{\wedge}(i)$ is an endpoint when either $
	i\le\min\text{im}\gamma
$ or $
	i>\max\text{im}\gamma
$. %
This fact is also used in Theorem \ref{thm: 222}.

\end{observation}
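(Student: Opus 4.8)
The plan is to unwind the definition $\gamma^\wedge=(\gamma+[0])^\ell$ and feed it into the explicit left-adjoint formula recorded above. First I would describe the map $\gamma+[0]\colon[m+1]\rightarrow[n+1]$ concretely: it restricts to $\gamma$ on the subset $\{0,\ldots,m\}\subseteq[m+1]$ and sends the adjoined top element $m+1$ to the top element $n+1$. Since $\gamma$ is order preserving we have $\min\operatorname{im}\gamma=\gamma(0)$ and $\max\operatorname{im}\gamma=\gamma(m)$, so the two hypotheses become $i\le\gamma(0)$ and $i>\gamma(m)$ respectively. Applying the formula, for $i\in[n+1]$ one has
\[
	\gamma^\wedge(i)
=	\min\{\,k\in[m+1]\SuchThat i\le(\gamma+[0])(k)\,\}.
\]

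Next I would split into the two cases. If $i\le\gamma(0)$, then $i\le\gamma(0)=(\gamma+[0])(0)$, so $k=0$ already lies in the set being minimised; as $0$ is the least element of $[m+1]$ this forces $\gamma^\wedge(i)=0$, the least endpoint. If instead $i>\gamma(m)$, then for every $k\le m$ we have $(\gamma+[0])(k)=\gamma(k)\le\gamma(m)<i$, so no such $k$ qualifies; the only remaining candidate is $k=m+1$, and indeed $(\gamma+[0])(m+1)=n+1\ge i$ because $i\in[n+1]$. Hence the minimum is attained at $m+1$, giving $\gamma^\wedge(i)=m+1$, the greatest endpoint. In either case $\gamma^\wedge(i)$ is an endpoint.

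I do not expect any genuine obstacle here; the only points needing a little care are the treatment of the top element $m+1$ adjoined by $\Slot+[0]$ (this is exactly what keeps the minimised set non-empty and places the second value at the upper endpoint) and the degenerate case $m=-1$, where $[m+1]=[0]$ has its two endpoints coinciding and $\gamma^\wedge$ is forced to be constant at $0$, so the statement holds vacuously. As a sanity check, both values can alternatively be read straight off the fiber computation of Observation \ref{obs: the pin}, since $\gamma^\wedge(i)=0$ precisely on the fiber $\{0,\ldots,\gamma(0)\}$ and $\gamma^\wedge(i)=m+1$ precisely on the fiber $\{\gamma(m)+1,\ldots,n+1\}$.
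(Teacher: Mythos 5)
Your proposal is correct and is precisely the direct verification that the paper leaves implicit: the paper states this observation without proof, since it follows immediately from the left-adjoint formula applied to $\gamma^\wedge=(\gamma+[0])^\ell$. Your two cases (the minimum is attained at $k=0$ when $i\le\gamma(0)$, and the admissible set collapses to $\{m+1\}$ when $i>\gamma(m)$), together with your attention to the adjoined top element and the degenerate case $[m]=[-1]$, match exactly the computation the paper intends and its cross-check against Observation~\ref{obs: the pin}.
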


%
	\section{Induction on intervals and ordinals}
	
	\label{sec: induction}


In this section we introduce inductively defined augmented 
categories ~$\iDisc$ and ~$\iPidi$ and demonstrate 
that they are dual. %
	
	\begin{definition}

We define the category ~$\iDisc$ inductively. %
The object of \emph{height} 0 (zero) is the 
interval ~$[0]$ and is \emph{trivial}. %
An object ~$H$ of \emph{height} ~$n$ is an 
interval ~$\Root{H}$ and for each ~$i$ in $\Root{H}$ 
an object ~$H(i)$
of height strictly less than ~$n$ which is trivial if and only if ~$i$ is an endpoint of ~$\Root{H}$. %

For every object ~$H$ there is a unique morphism ~${H}\rightarrow[0]$. %
Hence~ ~$[0]$ is terminal. %
A morphism ~$
	{g}\colon{H}\rightarrow{K}$ 
consists of an interval map ~$
	{g}\colon{\Root{H}}\rightarrow{\Root{K}}
$ and for all ~$i$ in ~$\Root{H}$ a  morphism $
	{g(i)}\colon{H(i)}\rightarrow{K(g i)}
$. %

Define composition using induction as follows. %
Let ~ $f\colon{H}\rightarrow{K}$ and ~ $
	g\colon{K}\rightarrow{L}
$ be composable morphisms. %
The object map of ~ $g\circ{f}$ is the composite of the object maps of ~$g$ and ~$f$. %
For ~$i\in\Root{H}$ then the composite ~
$	(g\circ{f})(i)
	\colon H(i) \rightarrow L(gfi)
$ is ~$ g(fi) \circ f(i)$. %
We have the category ~$\iDisc$. %
The category ~$\iDiscNT$ is the full subcategory 
of ~$\iDisc$ containing the non-trivial objects. %
\end{definition}
	
	\begin{definition}
	
We define the category ~$\iPidi$ inductively. %
The object 
of \emph{height} 0 (zero) is the ordinal ~$[-1]$ and is \emph{trivial}. %
An object 
~$K$ of \emph{height} ~$n$ is an ordinal ~$\Root{K}$ and for each ~$
	j\in (\Root{K})^{\wedge}
$ an object ~$K(j)$
of height strictly less than ~$n$ which is trivial if and only if ~$j$ is an endpoint of ~$
	(\Root{K})^{\wedge}
$. %

For every object 
~$K$ there is a unique morphism ~$[-1]\rightarrow{K}$. %
Hence~ ~$[-1]$ is initial. %
A morphism 
$	{g}\colon{H}\rightarrow{K}$ 
consists of an ordinal map~
$	{g}\colon\Root{H}\rightarrow\Root{K}$ 
and for all ~$j\in (\Root{K})^{\wedge}$ 
a morphism 
~$	{g(j)}
	\colon{H(g^{\wedge}j)}
	\rightarrow{K(j)}
$. %

Define composition using induction as follows. %
Let ~ $f\colon{H}\rightarrow{K}$ and ~ $g\colon{K}\rightarrow{L}$ be composable morphisms. 
The object map of ~ $g\circ{f}$ is the composite of the object maps of ~$g$ and ~$f$. %
For ~$i\in\Root{L}$ then the composite ~
$	(g\circ{f})(i)
	\colon H((g\circ f)^\wedge i) 
	\rightarrow L(i)
$ is ~$ g(i) \circ f(g^\wedge i)$. %
We have the category ~$\iPidi$.
The category ~$\iPidiNT$ is the full subcategory 
of ~$\iPidi$ containing the non-trivial objects. %
\end{definition}

	\begin{definition}
	
We define functors 
\[	\vee\colon\iDisc^\op\rightarrow\iPidi
\]
and
\[	\wedge\colon\iPidi^\op\rightarrow\iDisc
\] 
using the functors $(\Slot)^{\vee}$ and $(\Slot)^{\wedge}$ of the equivalence between ordinals and intervals. %
	
Define $\vee$ on objects using induction on their height. %
Send the trivial object 
$[0]$ of $\iDisc$ to the trivial object 
$[-1]$ of $\iPidi$. %
Assume ~$\vee$ is defined for objects of height ~$n$ and let ~$H$ 
be an object of height~ $n+1$. %
Define ~ $\vee H$ as ~ 
$	((\Root{H})^{\vee}, \vee H(i) )
$. %

Define $\vee$ on morphisms using induction on the height 
of their codomain. Send each unique morphism of $\iDisc$
into the trivial object $[0]$ to the corresponding 
unique morphism of $\iPidi$ 
out of the trivial object ~$[-1]$. %
Assume ~$\vee$ is defined on morphisms 
with codomain
of height ~$n$ and let ~$g$ be a morphism of $\iDisc$
with codomain of height~ $n+1$. 
Define ~ $\vee g$ as ~ 
$	(g^{\vee}, \vee g(i) )
$ 
as each morphism ~$g(i)$ has codomain of height~ $n$. 

Similarly, define $\wedge$ on objects (respectively morphisms) using induction on their height (respectively on the height of their domains). %
Send the trivial object of $\iPidi$ 
to the trivial object of $\iDisc$. 
Send morphisms of $\iPidi$ 
out of the trivial object to morphisms of $\iDisc$
into the trivial object. %
Let~
$	g
$
be a morphism of $\iPidi$. 
Define ~ $\wedge g$ as ~ 
$	(g^{\wedge},\wedge g(i))
$. 

\end{definition}

	\begin{theorem} 
	
	\label{thm: 122 iDisc}
	
The functors $\vee$ and $\wedge$ are mutually inverse isomorphisms. 
\end{theorem}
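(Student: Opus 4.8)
The plan is to establish Theorem~\ref{thm: 122 iDisc} by a single induction on height, proving all four required facts at once: that $\vee$ and $\wedge$ are well-defined functors and that $\wedge\circ\vee=\Id_{\iDisc}$ and $\vee\circ\wedge=\Id_{\iPidi}$. At each stage the statement splits into a claim about \emph{roots}, which I reduce directly to the ordinal/interval isomorphism of Theorem~\ref{theorem: ord/int iso}, and a claim about \emph{components}, which I hand off to the inductive hypothesis.

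First I would record the bookkeeping identities that make the induction close up. Because $(\Slot)^\vee$ and $(\Slot)^\wedge$ are mutually inverse (Theorem~\ref{theorem: ord/int iso}), for every object $H$ of $\iDisc$ we have $((\Root{H})^\vee)^\wedge=\Root{H}$, so the index set $(\Root{\vee H})^\wedge$ over which the components of $\vee H$ must be listed is literally $\Root{H}$; and for a morphism $g$ with underlying interval map $g\colon\Root{H}\to\Root{K}$ we have $(g^\vee)^\wedge=g$. These two identifications are exactly what let me match the components $(\vee H)(j)=\vee(H(j))$ and the component maps $(\vee g)(j)=\vee(g(j))$ with the original data of $H$ and $g$. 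The symmetric identities $((\Root{K})^\wedge)^\vee=\Root{K}$ and $(g^\wedge)^\vee=g$ do the same job for $\wedge$.

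Next I would verify that $\vee$ and $\wedge$ are well-defined functors. For objects, the only thing to check is the ``trivial if and only if endpoint'' condition: by induction $\vee$ preserves height and hence sends trivial objects to trivial ones and non-trivial to non-trivial, so $(\vee H)(j)=\vee(H(j))$ is trivial precisely when $H(j)$ is, that is, precisely when $j$ is an endpoint of $\Root{H}=(\Root{\vee H})^\wedge$, which is exactly the requirement for $\vee H$ to be a legitimate object of $\iPidi$. Preservation of identities and of the inductively defined composites then follows from the inductive hypothesis together with the facts that $(\Slot)^\vee$ and $(\Slot)^\wedge$ preserve composition and that the component-wise composition formulas in the two definitions match up under the identifications above. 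I expect this part to be routine but notation-heavy.

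The heart of the argument is the two inductions for the composites. For $\wedge\circ\vee=\Id_{\iDisc}$ the base case is $\wedge(\vee[0])=\wedge[-1]=[0]$; for an object $H$ of height $n+1$ the root of $\wedge(\vee H)$ is $((\Root{H})^\vee)^\wedge=\Root{H}$ and, for each $i\in\Root{H}$, its $i$-th component is $\wedge((\vee H)(i))=\wedge(\vee(H(i)))=H(i)$ by the inductive hypothesis, so $\wedge(\vee H)=H$; for a morphism $g\colon H\rightarrow K$ the underlying interval map of $\wedge(\vee g)$ is $(g^\vee)^\wedge=g$ and its $i$-th component is $\wedge(\vee(g(i)))=g(i)$ by induction, so $\wedge(\vee g)=g$. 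The proof of $\vee\circ\wedge=\Id_{\iPidi}$ is identical after exchanging the roles of $\vee$ and $\wedge$ and of intervals and ordinals. The only real obstacle is organisational: because both functors are contravariant, one must keep the reindexing between $i\in\Root{H}$ and $j\in(\Root{\vee H})^\wedge$ consistent and make sure the direction of each component morphism lines up with its (co)domain, and it is exactly at these matchings that Theorem~\ref{theorem: ord/int iso} is invoked.
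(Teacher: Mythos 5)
Your proposal is correct and takes exactly the approach the paper intends: the paper's proof is the single remark that the result ``uses induction and follows directly from the mutually inverse functors of the duality between ordinals and intervals,'' and your argument is precisely that induction on height, with the root claims reduced to Theorem~\ref{theorem: ord/int iso} via the identities $((\Root{H})^\vee)^\wedge=\Root{H}$ and $(g^\vee)^\wedge=g$, and the component claims handled by the inductive hypothesis. You have simply supplied the bookkeeping the paper leaves implicit.
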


The proof uses induction and follows directly from the mutually inverse functors of the duality between ordinals and intervals. 

	\begin{corollary}
The categories ~$\iDiscNT$ and ~$\iPidiNT$ dual. 	
\end{corollary}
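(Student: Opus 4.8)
The plan is to deduce this directly from Theorem~\ref{thm: 122 iDisc} by restricting the mutually inverse isomorphisms $\vee$ and $\wedge$ to the full subcategories of non-trivial objects. First I would record that, by the inductive definitions, each of $\iDisc$ and $\iPidi$ has exactly one trivial object, namely the height-$0$ object ($[0]$ in $\iDisc$ and $[-1]$ in $\iPidi$); consequently $\iDiscNT$ and $\iPidiNT$ are obtained from $\iDisc$ and $\iPidi$ by deleting precisely this one object together with the morphisms to and from it. Since $\vee\colon\iDisc^\op\rightarrow\iPidi$ is an isomorphism of categories it is in particular a bijection on objects, and by construction it sends the trivial object $[0]$ to the trivial object $[-1]$.

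Next I would argue that $\vee$ therefore carries non-trivial objects to non-trivial objects: as a bijection on objects that sends the unique trivial object to the unique trivial object, it must restrict to a bijection from the non-trivial objects of $\iDisc$ onto the non-trivial objects of $\iPidi$. Because $\iDiscNT$ and $\iPidiNT$ are \emph{full} subcategories, every morphism between non-trivial objects already lies in them, and $\vee$ carries such a morphism to a morphism between non-trivial objects; the inverse $\wedge$ restricts in exactly the same way. Hence $\vee$ and $\wedge$ descend to mutually inverse isomorphisms $\iDiscNT^\op\rightarrow\iPidiNT$ and $\iPidiNT^\op\rightarrow\iDiscNT$, which is precisely the assertion that $\iDiscNT$ and $\iPidiNT$ are dual.

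The only point that requires care — and the step I expect to be the main (if minor) obstacle — is checking that the property of being non-trivial is genuinely preserved, i.e. that the image of the deleted object on each side is exactly the deleted object on the other side. This reduces to the bookkeeping already built into the definition of $\vee$ and $\wedge$, where the trivial object is sent to the trivial object by fiat; since an isomorphism of categories is a bijection on objects, the complementary classes of non-trivial objects are then forced to correspond. No additional argument about morphisms is needed beyond invoking fullness of the two subcategories, so the corollary follows immediately from the theorem.
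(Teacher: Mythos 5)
Your proposal is correct and is essentially the argument the paper leaves implicit: Theorem~\ref{thm: 122 iDisc} gives mutually inverse isomorphisms $\vee$ and $\wedge$ that by construction exchange the unique trivial objects $[0]$ and $[-1]$, so they restrict to the full subcategories of non-trivial objects and exhibit the duality. The one point you flag as needing care (preservation of non-triviality) is indeed settled exactly as you say, by the bijectivity on objects together with the fiat that trivial goes to trivial, so nothing is missing.
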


	\section{Equivalence between $\iDiscNT$ and $\Disks$}
	
	\label{sec: equivalences}

In Section \ref{sec: induction} we showed  that the categories ~$\iDiscNT$ and ~$\iPidiNT$ are dual. %
Here we demonstrate that the category ~$\iDiscNT$ 
and Joyal's category $\Disks$ are equivalent. %
To do so we construct the category ~$\DisksA$, the
augmented counterpart to ~$\Disks$, and show that
it is equivalent to the category ~$\iDisc$. %
We begin by defining \emph{forests} and \emph{trees},
and then define operations of restriction and suspension
on trees in order to work with them inductively. %

	\begin{definition} 
	
	\label{def: forest}

A \emph{forest} is a functor	
~$	{A}\colon\omega^\op\rightarrow\CatName{Set}
$ %
\[
\cdots \to/->/<500>^{p_2} 
A_2 \to/->/<500>^{p_1} 
A_1 \to/->/<500>^{p_0}
A_0.
\]
A \emph{tree} ~$A$ is a forest such that 
~$	A_{0}\cong\{\ast\}
$. %
The vertices of \emph{height} ~$n$ are the elements of ~$A_n$. 
The unique vertex of height \TextZero\ of a tree is called the \emph{root}. 
We often denote the root of a tree ~ $A$ as ~ $\ast$. %
We can regard a tree as a directed graph. There is an edge from vertex ~$x$ to vertex ~$y$ when ~$p_n(x)=y$ for some~$n\in\mathbb{N}$. 
We denote the above forest by ~$(A, p)$ or $A$. %
Define ~$p_{n,m}\colon{A_{n+m}}\rightarrow{A_{n}}$ as the composite~$\circ_{i=n}^{n+m-1} p_i$ which is
\[	
	A_{n+m} \to/->/<500>^{p_{n+m-1}} 
	\cdots \to/->/<500>^{p_{n}}
	A_{n}. %
\]
A forest has \emph{degree} ~$n$ when ~$p_{n,m}$ is a bijection for all ~$m\ge{1}$ and has \emph{finite degree} when it has degree some $n\in\mathbb{N}$. %
\end{definition}

	\begin{definition} 
	
	\label{def: tree map}
	
A \emph{forest map} is a natural transformation of forests and so is a sequence of set maps 
$$\bfig
\square|ammb|/->``->`->/[\cdots`A_2`\cdots`B_2;p_2`\cdots`f_2`q_2]
\square(500,0)|ammb|/->`->`->`->/[A_2`A_1`B_2`B_1;p_1`f_2`f_1`q_1]
\square(1000,0)|ammb|/->`->`->`->/[A_1`A_0`B_1`B_0;p_0`f_1`f_0`q_0]
\efig$$
such that the squares commute. %
This map is denoted by 
~$	f\colon(A,p)\rightarrow(B,q)
$ or 
~$	f\colon{A}\rightarrow{B}
$. %
A \emph{tree map} is a natural transformation between trees. %
We have the category $\CatName{Forest}$ and its full subcategory ~$\Tree$. %
\end{definition}

	\begin{definition}
	
	\label{def: subtree}

We define a \emph{restriction} operation on trees
in order to work with trees as inductive or recursive objects. %
Let 
~$	{u_n}\colon\omega\rightarrow\omega
$
be the functor defined by 
~$	{u_n}(i)=i+n
$. %
Let ~$A$ be a forest and ~$x$ an element of ~$A_{n}$. %
The \emph{\SubThingText{A}{x}} denoted 
~$	\SubThing{A}{x}
$ 
is the largest subfunctor of 
~$	A\circ u_{n}
	\colon\omega^\op
	\rightarrow\CatName{Set}
$
such that 
~$	\SubThing{A}{x}_{0}=\{x\}
$. %
We sometimes refer to ~$\SubThing{A}{x}$ as a \emph{subtree} of ~$A$. %

The \emph{\SubThingText{f}{x}} denoted ~ 
$	\SubThing{f}{x}
$, where
~$	f\colon{A}\rightarrow{B}
$ 
is a forest map, 
is the lifting in 
\[\bfig
\square(0,0)|alrb|/-->`->`->`->/<700,300>%
	[\SubThing{A}{x}
	`\SubThing{B}{f_n x}
	`A\circ u_n`B\circ u_n
	;\SubThing{f}{x}
	`\mathrm{incl}
	`\mathrm{incl}
	`f\cdot u_n]
\efig
\] 
of ~$f\cdot u_n$ along the inclusions 
~$	\SubThing{A}{x} \rightarrow A\circ u_n
$ 
and 
~$	\SubThing{B}{f_n x} 
	\rightarrow B\circ u_n
$. %

\end{definition}

	\begin{definition}
	
	\label{def: suspension tree}

Define a \emph{suspension} functor 
\[	\operatorname{su}
	\colon\CatName{Forest}
	\rightarrow\Tree
\]
as follows. %
Given a forest ~$A$ then its \emph{suspension} ~$\operatorname{su}A$ is 
\[
\cdots \to/->/<500>^{p_1} 
A_1 \to/->/<500>^{p_0} 
A_0 \to/->/<500>
\{\ast\}.
\] %
Given a map 
~$	f\colon{A}\rightarrow{B}
$
of forests then its \emph{suspension} ~$\Suspend f$ is 
$$\bfig
\square(500,0)|amma|
	/{->}`{}`{->}`{->}/
	[\cdots`A_1`\cdots`B_1
	;\phantom{s}
		`\cdots
		`f_{1}
		`\phantom{s}
	]
\square(1000,0)|amma|
	/{->}`{}`{->}`{->}/
	[A_1`A_0`B_1`B_0
	;\phantom{s}
		`f_{1}
		`f_{0}
		`\phantom{s}
	]
\square(1500,0)|amma|
	/{->}`{}`{->}`{->}/
	[A_0
	`\phantom{.}\{\ast\}\phantom{.}
	`B_0
	`\phantom{.}\{\ast\}.
	;\phantom{s}
		`f_{0}
		`
		`\phantom{s}
	]
\efig$$

\end{definition}

%

	\begin{observation}
	
 	\label{obs: id = sub suspend}
	
The coproduct of a collection of trees is a forest
and its suspension is a tree. The subtrees of the 
suspension are isomorphic to the trees of the original collection. %
We provide the details below. %

Let $(A(i),p(i))$ be a tree with ~$A(i)_{0}=\{x_{i}\}$ for each ~$i$ in a set $I$. %
Let
~$	A'=\Suspend \sum A(i)
$ and let
~$	\mathrm{copr}(i)
	\colon A(i)
	\rightarrow\sum{A(i)}
$ 
be coprojections for each ~$i\in{I}$. %
The fiber of ~$p(i)_{0,n}$ over ~$x_i$ is ~$A(i)_n$. %
The fiber of ~$[\sum p(i)]_{0,n}$ over ~$x_i$ lying in ~$
	\sum A(i)_0$ is ~$\SubThing{A'}{x_i}_n
$. %
The coproduct in ~$\CatName{Set}$ requires that the 
former is sent by the left coprojection of
$$\bfig
\Square/->`->`->`-->/
	[A(i)_n`A(i)_0`\sum A(i)_n`\sum A(i)_0.
	;p(i)_0^n
	`\mathrm{copr}
	`\mathrm{copr}
	`\sum p(i)_{0,n}
	]
\efig$$
onto the latter. %
As the coprojections are monomorphisms then ~$A(i)$ and ~$\SubThing{A'}{x_i}$ are isomorphic by ~$\mathrm{copr}(i)$. %
\end{observation}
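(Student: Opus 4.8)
The plan is to dispose of the two framing claims directly from the definitions and then concentrate on the third, which identifies the subtrees of the suspension.

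First I would note that the coproduct $\sum A(i)$, formed levelwise by $(\sum A(i))_n = \sum_i A(i)_n$ with induced maps $\sum_i p(i)_n$, is again a functor $\omega^\op\rightarrow\CatName{Set}$ and hence a forest; and that, by the definition of $\Suspend$, the forest $A'=\Suspend\sum A(i)$ carries $\{\ast\}$ in degree $0$, so $A'_0\cong\{\ast\}$ and $A'$ is a tree. Both facts are formal and require no more than unwinding the definitions.

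For the isomorphism $\SubThing{A'}{x_i}\cong A(i)$ I would argue levelwise. Each $x_i\in A(i)_0$ sits, via $\mathrm{copr}(i)$, inside $(\sum A(i))_0 = A'_1$, so $\SubThing{A'}{x_i}$ is the subtree of $A'$ at a height-$1$ vertex; by the description of restriction as the largest subfunctor with prescribed base, $\SubThing{A'}{x_i}_n$ is precisely the fiber of $[\sum p(i)]_{0,n}\colon\sum_i A(i)_n\rightarrow\sum_i A(i)_0$ over $x_i$. The governing structural fact is that this composite respects the coproduct decomposition, carrying the $i$-th summand $A(i)_n$ into the $i$-th summand $A(i)_0$ by $p(i)_{0,n}$; hence, because coproducts in $\CatName{Set}$ are disjoint with monic coprojections, the fiber over $x_i$ lies entirely within the $i$-th summand and coincides with the fiber of $p(i)_{0,n}$ over $x_i$. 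Since $A(i)$ is a tree we have $A(i)_0=\{x_i\}$, so that fiber is all of $A(i)_n$, and $\mathrm{copr}(i)$ restricts to a bijection $A(i)_n\rightarrow\SubThing{A'}{x_i}_n$ in each degree.

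It then remains to check that these degreewise bijections assemble into a forest isomorphism, i.e.\ that they commute with the structure maps $p(i)_n$ and $\sum_i p(i)_n$; this is immediate from the naturality of the coprojections. The main obstacle is not any single calculation but the bookkeeping that localizes the fibers to a single summand: the disjointness of the set-level coproduct — equivalently, that the coprojections are monomorphisms with pairwise disjoint images — is exactly what forces the fiber of $[\sum p(i)]_{0,n}$ over $x_i$ to stay inside $A(i)$, and it is the hinge on which the whole argument turns.
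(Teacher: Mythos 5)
Your proposal is correct and follows essentially the same route as the paper's own argument: identify $\SubThing{A'}{x_i}_n$ as the fiber of $[\sum p(i)]_{0,n}$ over $x_i$, use the compatibility of the levelwise projections with the coproduct (the paper's commuting square, your disjointness of set-level coproducts) to localize that fiber to the $i$-th summand, and conclude via monic coprojections that $\mathrm{copr}(i)$ restricts to an isomorphism. Your explicit closing check that the degreewise bijections commute with the structure maps is a detail the paper leaves implicit, but it is the same proof.
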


	\begin{definition}

	\label{def: disk}
	
A \emph{disk} as defined by Joyal in \cite{AJ_DDTC} is a tree $(A,p)$ of finite degree
\begin{enumerate}
\item \label{enum: fibers are intervals}
such that the fibers of $p_n\colon A_{n+1} \rightarrow A_{n}$ have interval structure (for $n\in\mathbb{N}$)
\item \label{enum: sections give endpoints} 
with sections 
$d_0,d_1\colon A_n\rightarrow A_{n+1}$ of $p_n$ 
where $p_n^{\FiberOver}(x)=[d_0(x),..,d_1(x)]$
\item \label{enum: equalizer}
such that the equalizer of $d_0,d_1\colon A_n\rightarrow A_{n+1}$ is $d_0(A_{n-1})\cup d_1(A_{n-1})$.
\end{enumerate}
The equalizer of condition \ref{enum: equalizer} is the \emph{singular set} of $A_n$. All fibers are non-empty by condition \ref{enum: sections give endpoints} and the interval $p_0^{\FiberOver}(x)$ is strict by condition \ref{enum: equalizer}, where ~$x$ is the single element of ~$A_0$. %

A \emph{morphism of disks} $f\colon(A,p)\rightarrow(B,q)$ as defined by Joyal in \cite{AJ_DDTC} as a sequence of set maps ~$f_n\colon A_n\rightarrow B_n$ which commute with the projections $p_n$ and $q_n$, respect the order of the interval fibers and preserve the endpoints (first and last elements) of the interval fibers. This defines the category ~$\Disks$. %

In addition, we define the augmented category $\DisksA$. %
An object of ~$\DisksA$ is a tree 
of finite 
degree satisfying \ref{enum: fibers are intervals} and \ref{enum: sections give endpoints} above, 
but we relax \ref{enum: equalizer} to allow the fiber 
over the root to be the unique non-strict 
interval ~$[0]$. %
Hence ~$\DisksA$ contains, in addition to the objects
of ~$\Disks$, trees of degree \TextZero. %
These additional objects are terminal. %
The morphisms of ~$\DisksA$ are defined identically 
to those of ~$\Disks$. %
\end{definition}

	\begin{definition} \label{def: disks}

We define a functor \[
	\Phi\colon\DisksA\rightarrow\iDisc.
\]	
Define ~$\Phi$ on objects using induction on the degree of disks. %
Send disks of degree \TextZero\ to ~$[0]$ the trivial object of $\iDisc$. %
Assume that ~$\Phi$ is defined on disks of degree ~$n$ 
and let ~$(A,p)$ be a disk of degree ~$n+1$. %
We define an object ~$H$ of ~$\iDisc$ from the 
data of ~$(A,p)$. %
Let ~$
	\Root{H}=p_{0}^{\FiberOver}(x)
$, the fiber over  the unique element ~$x$ of ~$A_{0}$, 
and let ~$
	H(i) = \Phi A(i)
$ for each ~$i\in\Root{H}$ where ~$\SubThing{A}{i}$ 
is the \SubThingText{A}{i}. %
Note that ~$\Root{H}$ is an interval as fibers have
interval structure, that ~$A(i)$ is a disk of 
degree ~$n$ for each ~$i\in \Root{H}$ and that ~$A(i)$ 
is trivial when ~$i$ is an endpoint by 
condition \ref{enum: equalizer}. %
Define ~ $\Phi A$ as ~ $H$. %

We define ~$\Phi$ on disk morphisms using induction on the degree of their codomain. %
The disks of degree \TextZero\ are terminal objects. %
As ~$\Phi$ preserves the terminal object then disk morphisms with codomain of degree \TextZero\ are sent to the unique morphism into the trivial object of $\iDisc$. 
Assume ~$\Phi$ is defined on morphisms with 
codomain of degree ~$n$ and let ~ 
$	{f}\colon(A,p)\rightarrow(B,q)
$ be a disk morphism with codomain
of degree ~$n+1$. %
We define a morphism $g$ of $\iDisc$ from the 
data of ~ $f$. %
Let ~ 
$	g=f_1
$ 
which is an interval morphism as ~ $f$ preserves order and endpoints. %
Let
~$	g(i) = \Phi f(i)
$
where ~$f(i)$ is the \SubThingText{f}{i} for each 
~$i\in A_1$. %
Define ~$\Phi{f}$ as $g$. %
\end{definition}

	\begin{theorem} 

	\label{thm: 121 iDisc}
		
The category $\DisksA$ is equivalent to the category $\iDisc$ by
\[	\Phi\colon\DisksA\rightarrow\iDisc
\]
which is surjective on objects.
\end{theorem}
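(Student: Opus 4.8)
The plan is to show that $\Phi$ is fully faithful and surjective on objects; since surjectivity on objects implies essential surjectivity, this yields the asserted equivalence. Both properties are proved by induction, matching the inductive definitions of $\Phi$ and of $\iDisc$: surjectivity on objects by induction on the height of the object of $\iDisc$, and fullness and faithfulness by induction on the degree of the codomain disk.

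For surjectivity on objects, the base case sends any disk of degree \TextZero\ to the trivial object $[0]$. For the inductive step, let $H$ be an object of $\iDisc$ of height $n+1$, with root interval $R = \Root{H}$ and, for each $i \in R$, a subobject $H(i)$ of height at most $n$ that is trivial exactly when $i$ is an endpoint of $R$. By the inductive hypothesis each $H(i)$ equals $\Phi A(i)$ for some disk $A(i)$, of degree \TextZero\ precisely when $i$ is an endpoint. Form the coproduct $\sum_{i\in R} A(i)$ in $\CatName{Forest}$ and set $A' = \Suspend \sum_{i\in R} A(i)$. By Observation \ref{obs: id = sub suspend} this is a tree whose subtrees $\SubThing{A'}{x_i}$ recover the $A(i)$ and whose fiber over the root is in bijection with $R$; endowing that fiber with the interval structure of $R$ makes $A'$ an object of $\DisksA$ of degree $n+1$. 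By construction $\Root{\Phi A'} = R$ and $(\Phi A')(i) = \Phi A(i) = H(i)$, so $\Phi A' = H$.

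For fullness and faithfulness, the base case is immediate because a disk of degree \TextZero\ is terminal in $\DisksA$, its image $[0]$ is terminal in $\iDisc$, and both hom-sets in question are singletons. For the inductive step, let $(A,p)$ and $(B,q)$ be disks with $B$ of degree $n+1$. The crucial point is that a disk morphism $f\colon A\to B$ is determined by, and freely assembled from, its component $f_1\colon A_1\to B_1$ on the root fibers together with its restrictions to subtrees: since $f$ commutes with the projections, any vertex of $A$ lying over $i\in A_1$ is sent into the subtree over $f_1(i)$, so $f$ restricts to a map $\SubThing{f}{i}\colon\SubThing{A}{i}\to\SubThing{B}{f_1 i}$, and conversely $f_1$ together with a compatible family of subtree maps determines $f$ uniquely. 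This is exactly the data of an $\iDisc$ morphism $\Phi A\to\Phi B$, namely an interval map on roots together with a family of morphisms on the subobjects. Moreover, the conditions defining a disk morphism (commuting with projections, respecting the order of the interval fibers, and preserving endpoints) hold for $f$ if and only if $f_1$ is an interval map and each $\SubThing{f}{i}$ is a disk morphism; invoking the inductive hypothesis on the subtree maps then yields the desired bijection between hom-sets.

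The main obstacle is not the categorical bookkeeping but the verification that the reconstructed tree $A'$ in the surjectivity step genuinely satisfies the disk axioms of Definition \ref{def: disk}: that all of its fibers inherit interval structure, that the sections $d_0,d_1$ of the projections assemble coherently across levels, and that the relaxed equalizer (singular-set) condition holds, with triviality of each $A(i)$ matching the endpoints of $R$ exactly as required. Likewise, in the faithfulness step one must check that the order- and endpoint-preservation conditions localize correctly to the root fiber and to each subtree. These verifications are routine but delicate; once they are in place, fully faithful together with surjective on objects gives the equivalence.
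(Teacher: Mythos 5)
Your proposal is correct and follows essentially the same route as the paper: surjectivity on objects by induction on height, constructing the preimage as $\Suspend\sum A(i)$ with the root fiber given the interval structure of $\Root{H}$ and invoking Observation \ref{obs: id = sub suspend}; and fullness/faithfulness by induction, using the fact that a disk morphism is determined by (and can be assembled from) its root-fiber component $f_1$ together with its restrictions to subtrees, which is exactly the paper's jointly-surjective-inclusions and coproduct-diagram argument. The only cosmetic difference is that you index the morphism induction by the degree of the codomain disk rather than the height of the codomain object of $\iDisc$, which agree under $\Phi$.
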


\begin{proof}

	\textbf{Surjective.} 
We show that ~$\Phi$ is surjective on objects using induction on the height of objects of $\iDisc$. 
The trivial disks are sent to the trivial object ~
$[0]$ and so ~$\Phi$ is surjective on objects of height \TextZero. %
Assume ~$\Phi$ is surjective on objects
of height ~$n$ and let ~$H$ be an object
of height ~$n+1$. %
By induction there exists a disk ~$A(i)$ with ~$\Phi A(i)=H(i)$ for each ~${i}$ in ~ $\Root{H}$. %
Let 
~$	A'
=	\Suspend \sum A(i)
$ where the coproduct is indexed over 
the elements of ~ $\Root{H}$. %
Give the fiber over ~ $\ast$ an interval structure
from that of ~$\Root{H}$. %
For each element ~$x$ in ~$A'_n$ with ~$n\in\mathbb{N}_+$ give the fiber over ~$x$ an interval structure by pulling back along the coprojections. %
By Observation ~\ref{obs: id = sub suspend} then ~$A'(i)\cong A(i)$. %
Define ~$\Phi A'$ as ~ $H'$. %
As ~$\Phi$ is constant on isomorphism classes then 
we have ~$
	\Phi A'(i)=\Phi A(i)
$, equivalently ~$H'(i)=H(i)$. %
By construction ~$
	\Root{H'}=(p'_0)^\FiberOver(\ast)
$ is isomorphic to ~$\Root{H}$ and is in fact identically ~$\Root{H}$ as ~$\Delta$ is skeletal. %
Hence ~$\Phi$ is surjective on objects.

\textbf{Faithful.} We show that ~$\Phi$ is faithful using induction on the height of the codomain of 
morphisms of $\iDisc$. %
As ~$\Phi$ reflects the terminal object it is 
faithful on morphisms with codomain of 
height \TextZero. %
Assume that ~$\Phi$ is faithful on morphisms with
codomain of height ~$n$ and let ~$
	{f},{f'}\colon(A,p)\rightarrow(B,q)
$
be parallel disk morphisms such that ~$
	\Phi f = \Phi f'
$
has codomain of height ~$n+1$. %
Then, by induction, ~$
	f(i) = f'(i)
$ for each ~$
	i\in A_1
$. %
As ~$\SubThing{f}{i}$ and ~$\SubThing{f'}{i}$ are defined by liftings on inclusions which are jointly surjective it follows that ~$f=f'$ and ~$\Phi$ is faithful. %

\textbf{Full.} We show ~$\Phi$ is full using induction on the height of the codomain of morphisms of $\iDisc$. %
As ~ $\Phi$ preserves the terminal object it is
full on morphisms of height \TextZero. %
Assume that ~$\Phi$ is full on 
morphisms with codomain of height ~$n$ and let ~$
	{g}\colon\Phi(A,p)\rightarrow\Phi(B,q)
$ have codomain of height ~$n+1$. %
We have an interval morphism  ~$g$ and a 
morphism ~$g(i)$ of $\iDisc$ for each ~$i\in\Phi(A,p)$. %
By induction there exist disk morphisms ~$
	f(i)\colon{A(i)}\rightarrow{B(g i)}
$ such that ~$\Phi f(i) = g(i)$. %
In the following diagram
$$\bfig
\node m1(0,0)[A_{n+1}]
\node m2(700,0)[\sum \SubThing{A}{i}_n]
\node m3(1700,0)[\sum \SubThing{B}{g i}_n]
\node m4(2400,0)[B_{n+1}]
\node u2(700,500)[\SubThing{A}{i}_n]
\node u3(1700,500)[\SubThing{B}{g i}_n]
\arrow/->/[u2`m1;\mathrm{incl}]
\arrow|m|/->/[u2`m2;\mathrm{copr}]
\arrow/->/[u2`u3;f(i)_n]
\arrow|m|/->/[u3`m3;\mathrm{copr}]
\arrow/->/[u3`m4;\mathrm{incl}]
\arrow/-->/[m1`m2;]
\arrow/-->/[m2`m3;]
\arrow/-->/[m3`m4;]
\efig$$
the left triangle commutes as the inclusions into ~ $A_{n+1}$ are jointly surjective and the other regions commute by coproduct. %
Define ~$f'$ by setting ~$f'_{n+1}$ to the lower composite for each ~$n\in\mathbb{N}$. %
Then ~$\SubThing{f'}{i}$ is (by definition) the upper horizontal morphism and so 
~$	f(i)=\SubThing{f'}{i}$. %
The inclusions and commutativity of the diagram imply that ~$f_1$ as defined is identically ~$g$ as required. %
Hence ~$\Phi f'=g$. %
\end{proof}

	\begin{corollary}
	
The category $\Disks$ is equivalent to the 
category $\iDiscNT$. %
\end{corollary}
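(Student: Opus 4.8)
The plan is to deduce this corollary directly from Theorem \ref{thm: 121 iDisc}, which has just established that $\Phi\colon\DisksA\rightarrow\iDisc$ is an equivalence, surjective on objects. The strategy is to show that $\Phi$ carries the pair of full subcategories $\Disks\subseteq\DisksA$ and $\iDiscNT\subseteq\iDisc$ onto one another, and then to invoke the general principle that an equivalence which restricts to an essentially surjective functor between full subcategories is itself an equivalence on those subcategories.

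First I would pin down how $\Phi$ matches the ``trivial versus non-trivial'' dichotomy on the two sides. By definition $\Phi$ sends the degree \TextZero\ trees---which are exactly the objects of $\DisksA$ that do not lie in $\Disks$---to the trivial object $[0]$. Conversely, for a genuine disk $(A,p)$ the root $\Root{(\Phi A)}=p_0^{\FiberOver}(x)$ is a strict interval, so $\Phi A$ has height at least one and is therefore non-trivial. Since $[0]$ is the unique trivial object of $\iDisc$, this shows that $\Phi A$ is non-trivial if and only if $A\in\Disks$. In particular $\Phi$ restricts to a functor $\Phi\colon\Disks\rightarrow\iDiscNT$.

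Next I would observe that this restriction inherits fullness and faithfulness for free: both $\Disks\subseteq\DisksA$ and $\iDiscNT\subseteq\iDisc$ are full subcategories, and $\Phi$ is full and faithful by Theorem \ref{thm: 121 iDisc}, whence so is its restriction. It then remains only to check essential surjectivity. Given an object $H$ of $\iDiscNT$, surjectivity of $\Phi$ on objects yields some $A$ in $\DisksA$ with $\Phi A=H$; since $H$ is non-trivial, the dichotomy just established forces $A$ to be a genuine disk, so $A\in\Disks$ and $H$ lies in the image. Hence the restricted functor is full, faithful and essentially surjective, i.e. an equivalence.

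The argument is almost entirely formal once Theorem \ref{thm: 121 iDisc} is in hand, so the only point requiring genuine care---and hence what I would regard as the crux---is the object-level bookkeeping identifying the objects of $\Disks$ with the non-trivial objects and the extra degree \TextZero\ trees with the trivial object. Both halves of this follow from the definitions: the former because condition \ref{enum: equalizer} forces the root fiber $p_0^{\FiberOver}(x)$ of a genuine disk to be strict, and the latter because the relaxation of condition \ref{enum: equalizer} built into $\DisksA$ introduces precisely the degree \TextZero\ trees, all of which $\Phi$ collapses to $[0]$.
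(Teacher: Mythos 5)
Your proof is correct and follows exactly the route the paper intends: the corollary is stated as an immediate consequence of Theorem \ref{thm: 121 iDisc}, obtained by restricting the equivalence $\Phi$ to the full subcategories $\Disks\subseteq\DisksA$ and $\iDiscNT\subseteq\iDisc$. The paper leaves the details implicit, and your object-level bookkeeping (degree \TextZero\ trees $\leftrightarrow$ the trivial object $[0]$, genuine disks $\leftrightarrow$ non-trivial objects, via strictness of the root fiber) is precisely the verification needed to make that restriction argument rigorous.
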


	\section{Objects of $\Theta$}

From \cite{MB_UPMT}, an object of ~ $\Theta$ is defined as the free \TextCategory{\omega} on a non-empty globular cardinal. %
We define the augmented counterpart 
to ~$\Theta$, named ~$\Theta_+$, as containing objects 
which are the free \TextCategory{\omega} on a 
(possibly empty) globular cardinal. %

In Section \ref{sec: globular cardinals} we recall the
definition of globular cardinal and define 
restriction and suspension operations on them. %
In Section \ref{sec: ordinal graphs} we define 
ordinal graphs, which are inductively defined 
counterparts of globular cardinals. %
An equivalence is demonstrated between the categories of
globular cardinals and ordinal graphs. %
The section closes with the definition of a map that 
sends objects of $\iPidi$ to ordinal graphs. 
In Section \ref{sec: omega cats} we recall the 
definition of \TextCategory{\omega} and adapt Michael 
Batanin's construction of the free \TextCategory{\omega} 
on a globular set. %
In addition we define the free \TextCategory{\omega} 
on an ordinal graph and show that it is isomorphic 
to the free \TextCategory{\omega} of the corresponding
globular cardinal.

In this and following sections we will use the following shorthand with the intention of providing an uncluttered presentation. %
In the sequel often indices are given over all elements of a known finite linearly ordered set except the first. %
In some cases the previous element to the index is also used. 
Our shorthand is designed to simplify the presentation in these instances. %

	\begin{notation}

Let ~ $\CatName{FinOrd}$ denote the full subcategory of ~ $\CatName{Ord}$ with objects finite linearly ordered sets. %
Define 
\begin{align*}
\ExceptFirst{(\Slot)}
	& \colon	\Obj\CatName{FinOrd}
			\rightarrow\Obj\CatName{FinOrd}
\\	& \colon	\{x_0,x_1,\ldots,x_p\}
			\mapsto\{x_1, x_2, \ldots,x_p\}.
\end{align*}
which returns its argument without the first element. The ``$\backslash\mathrm{f}$'' is intended to indicate 
that the first element is removed. %
The operator $\Pred{(\Slot)}$ takes arguments which are elements of finite linearly ordered sets and returns their predecessor.  %
For example given a finite linearly ordered set
~ $I=\{x_0,x_1,\ldots,x_p\}$ then ~ 
$\Pred(x_i)=x_{i-1}
$ for all 
$	x_i\in\ExceptFirst{I}
$. %
\end{notation}

	\subsection{Globular cardinals}
	
	\label{sec: globular cardinals}

Globular cardinals were defined by Ross Street in \cite{RS_PTGS} Section 1. %
We collect here the definitions and notation related to this concept, as relevant for our purposes. %
In addition, we define restriction and suspension 
operations. 

	\begin{definition} \label{globular sets}

We begin by quoting the definition of globular object from \cite{RS_PTGS}. %
Let ~$\GlobDomain$ be the category with objects the natural numbers and non-identity arrows
\[	\sigma_{m},\tau_{m}\colon m\rightarrow n
	\quad\text{for }m<n
\]
such that 
\[\bfig	\Vtriangle[k`m`n;\beta_{k}`\beta_{k}`\alpha_{k}]
\efig\]
commutes for all ~$k<m<n$ and all ~$\alpha,\beta\in\{\sigma,\tau\}$. %
A \emph{globular object} in ~$\mathcal{C}$ is a functor ~$X\colon\GlobDomain^\op\rightarrow\mathcal{C}$. %
A \emph{morphism of globular objects} is a natural transformation between globular objects. %
Hence a \emph{globular set} is a pair of sequences of set maps
$$ \cdots 
\two/->`->/<500>^{s_2}_{t_2} X_2 \two/->`->/<500>^{s_1}_{t_1} X_1 \two/->`->/<500>^{s_0}_{t_0}  X_0$$
such that ~$s_n s_{n+1} = s_n t_{n+1}$ and ~$t_n s_{n+1} = t_n t_{n+1}$ for all ~$n\in\mathbb{N}$. %
The maps of the sequence ~$s$ are \emph{source maps} and the maps of the sequence ~$t$ are \emph{target maps}. %
The set ~$X_n$ contains the \emph{$n\text{-vertices}$} of ~$X$. %

A globular set $X$ has \emph{dimension} $n$ 
when ~$X_m$ is empty for all ~$m>n$. %
The empty globular set has dimension \TextMinusOne\ 
(minus one).
A globular set has \emph{finite dimension}
when it has dimension $n$
for some ~$n\in\mathbb{N}\cup\{-1\}$. %

A \emph{morphism ~$f\colon X\rightarrow Y$ of globular sets} is a sequence ~$f$ of set maps
$$\bfig
\square(500,0)|amma|
		/{@{>}@<3pt>}`{}`{->}`{@{>}@<3pt>}/
		[\cdots`X_2`\cdots`Y_2;s`\cdots`f_2`s]
\square(1000,0)|amma|
		/{@{>}@<3pt>}`{}`{->}`{@{>}@<3pt>}/
		[X_2`X_1`Y_2`Y_1;s`f_2`f_1`s]
\square(1500,0)|amma|
		/{@{>}@<3pt>}`{}`{->}`{@{>}@<3pt>}/
		[X_1`X_0`Y_1`Y_0;s`f_1`f_0`s]

\square(500,0)|bmmb|
		/{@{>}@<-3pt>}`{}`{->}`{@{>}@<-3pt>}/
		[\cdots`X_2`\cdots`Y_2;t`\cdots`f_2`t]
\square(1000,0)|bmmb|
		/{@{>}@<-3pt>}`{}`{->}`{@{>}@<-3pt>}/
		[X_2`X_1`Y_2`Y_1;t`f_2`f_1`t]
\square(1500,0)|bmmb|
		/{@{>}@<-3pt>}`{}`{->}`{@{>}@<-3pt>}/
		[X_1`X_0`Y_1`Y_0;t`f_1`f_0`t]

\efig$$
which commute with the source and target maps. %
We have the category 
of globular sets. %

A globular set ~$X$ has a partial order ~$\blacktriangleleft$ generated from the relation
\begin{eqnarray*}
	x \prec y 
	& \quad\text{when}\quad & 	x=s(y)
\\	& \quad\text{or}\quad &		y=t(x)
\end{eqnarray*}
for ~$x\in X_n$ and either 
~$	y\in X_{n+1}$ or ~$y\in X_{n-1}
$ 
(respectively). %
When ~$x\blacktriangleleft y$ there exists a (possibly trivial) sequence ~$x_0, ..., x_n$ of vertices in ~$X$ with 
~$	x_0=x
$, 
~$x_n=y
$ 
and 
~$x_{i-1}\prec x_i
$ for all 
~$	i\in\{1,..,n\}
$. %
A \emph{globular cardinal} is a globular set with a finite set of vertices and where the order given above is linear. We have ~$\GlobCard$ the category of globular cardinals. %
\end{definition}

	\begin{definition}

Vertices ~$x$ and ~$y$ in a globular cardinal ~$X$ are \emph{consecutive in ~$Y$} (with ~$Y$ a
subset of $X$) when 
~$	\{ z\in Y \SuchThat x \GCrel z 
						\GCrel y \}
=	\{x,y\}
$. %
	
\end{definition}

	\begin{observation}

	\label{obs: globcard incremental}

Let ~$f\colon{X}\rightarrow{Y}$ be a morphism of globular cardinals and let ~$x$ and ~$y$ be consecutive in ~$X_n$. %
There exists an element $z$ either in $X_{n+1}$
or ~$X_{n-1}$ such that ~
$s(z)=x$ and ~$t(z)=y$
or such that $s(y)=z$ and ~$t(x)=z$. 
In either case ~$f_n x$ and ~$f_n y$ are consecutive in ~ $Y_n$ as ~$f$ preserves source 
and target. %
Hence ~$f_n$ is injective and the image of ~$f_n$ is an interval. %
\end{observation}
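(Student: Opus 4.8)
The plan is to prove the underlying structural fact about $X$ first, then transport it across $f$, and finally read off injectivity and the interval condition as formal consequences.

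First I would produce the connecting element $z$ by exploiting that $\GCrel$ is a \emph{linear} order on the \emph{finite} set $X$. Since $x,y$ are consecutive in $X_n$ we have $x\GCrel y$ with $x\neq y$, so $x$ has an immediate successor $z$ in the global order, and $x\GCrel z\GCrel y$. Because $\GCrel$ is the reflexive--transitive closure of $\prec$ and $z$ covers $x$, any chain witnessing $x\GCrel z$ must be a single step, so $x\prec z$ directly; by the definition of $\prec$ this means either $z\in X_{n+1}$ with $s(z)=x$, or $z\in X_{n-1}$ with $t(x)=z$. I would then pin down the remaining boundary of $z$. Take the first case $z\in X_{n+1}$, $s(z)=x$ (the case $z\in X_{n-1}$ is symmetric, exchanging $s$ with $t$ and successor with predecessor). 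From $z\prec t(z)$ we get $x\GCrel t(z)$ with $t(z)\in X_n$, and antisymmetry of $\GCrel$ forbids $s(z)=t(z)$, so $t(z)\neq x$. It then suffices to show no $n$-cell lies strictly between $x$ and $t(z)$, for then $t(z)$ is the immediate $X_n$-successor of $x$, i.e.\ $t(z)=y$.

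This last point is where the combinatorics of globular cardinals is really needed, and I expect it to be the main obstacle. Any intermediate $u\in X_n$ would satisfy $z\GCrel u\GCrel t(z)$ (as $z$ is the immediate successor of $x$), so it is enough to show that the immediate successor $z'$ of $z$ is already $t(z)$. Since $z\prec z'$, either $z'=t(z)$ (done) or $z'\in X_{n+2}$ with $s(z')=z$; in the latter case the globular identities $s\,s=s\,t$ and $t\,s=t\,t$ show that $t(z')$ is an $(n+1)$-cell with the same pair of $n$-dimensional boundaries $x=s(z)$ and $t(z)$, so the argument ascends to $z'$ with the boundary pair unchanged. Because the vertex set is finite the dimension is bounded, so this ascent terminates at a cell whose immediate successor is its own target, and descending from there reaches $t(z)$ without passing through any cell of dimension $n$; hence $t(z)=y$.

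With $z$ in hand the behaviour under $f$ is immediate: $f$ commutes with $s$ and $t$, so $f_{n+1}(z)$ (respectively $f_{n-1}(z)$) connects $f_n x$ and $f_n y$ by exactly the same source/target pattern. Running the same immediate-successor argument inside the globular cardinal $Y$ in the easy ``connected $\Rightarrow$ consecutive'' direction then shows that $f_n x$ and $f_n y$ are consecutive in $Y_n$; in particular $f_n x\neq f_n y$. Finally I would deduce the numerical conclusions formally. Since $f$ preserves $\prec$ it preserves $\GCrel$, so $f_n\colon X_n\to Y_n$ is monotone for the induced linear orders; writing $X_n$ as a chain $x_1\GCrel\cdots\GCrel x_k$, each consecutive pair maps to a consecutive (hence distinct) pair, so $f_n x_1\GCrel\cdots\GCrel f_n x_k$ is strictly increasing, giving injectivity. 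Moreover, since consecutive elements map to consecutive elements, no element of $Y_n$ lies strictly between $f_n x_i$ and $f_n x_{i+1}$, so the image is a convex subset of the linearly ordered $Y_n$, that is, an interval.
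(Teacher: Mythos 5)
Your overall architecture is the same as the paper's (the paper simply asserts the existence of the connecting cell $z$ and then reads off consecutiveness of images, injectivity, and the interval condition), and your opening move is correct and worth having: the immediate successor $z$ of $x$ in the global linear order must satisfy $x\prec z$, since a covering relation in the order generated by $\prec$ collapses to a single generating step. The genuine gap is exactly at the step you flag as the main obstacle, namely $t(z)=y$ in the case $z\in X_{n+1}$. Your ascent $z=z_0\prec z_1\prec\cdots\prec z_k$ consists of covering steps, so it certifies that nothing lies between consecutive $z_i$; but your descent $t(z_k)\prec t(z_{k-1})\prec\cdots\prec t(z_0)=t(z)$ consists of bare $\prec$-steps, which certify nothing about what sits in the order between $t(z_{i+1})$ and $t(z_i)$. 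The inference ``there is a chain from $z$ to $t(z)$ avoiding dimension $n$, hence no $n$-cell lies strictly between them in $\GCrel$'' is invalid: with $1$-cells $f,g\colon a\to b$, $2$-cells $\alpha,\beta\colon f\Rightarrow g$ and a $3$-cell $m\colon\alpha\to\beta$, the chain $f\prec\alpha\prec g$ avoids $m$ and $\beta$, yet both lie strictly between $\alpha$ and $g$ in the linear order. If instead you intend the whole walk to proceed by immediate successors (which \emph{would} enumerate the order-interval), then it need not have your ascend-then-descend shape --- with parallel $2$-cells $\alpha,\beta,\gamma$ and $3$-cells $\alpha\to\beta$, $\beta\to\gamma$ the walk is $\alpha,m,\beta,m',\gamma,g$, re-ascending after each descent --- and proving that this walk meets $t(z)$ before any $n$-cell is precisely the claim at issue, not something your finiteness remark supplies.

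The repair is a \emph{first-passage} lemma, which your globularity computation almost proves: by induction along any $\prec$-chain from $z$ whose members all have dimension $\ge n+1$, every reachable cell $w$ satisfies $t^{(\dim w-n)}(w)=t(z)$ (the two cases of a $\prec$-step are handled by $t\circ t=t\circ s$); hence the first element of dimension $\le n$ on \emph{any} chain from $z$ is $t(z)$. This gives: every $u\in X_n$ with $z\GCrel u$ satisfies $t(z)\GCrel u$, so no $n$-cell lies strictly between $x$ and $t(z)$, and consecutiveness forces $t(z)=y$. Two further points then need attention. First, your case $z=t(x)\in X_{n-1}$ is not a mirror image of the first case: there you must show $s(y)=t(x)$, i.e.\ exclude \emph{two distinct} $(n-1)$-cells between $x$ and $y$, which requires analysing the immediate predecessor of $y$ as well and invoking the dual first-passage lemma. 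Second, your ``easy direction'' in $Y$ (connected $\Rightarrow$ consecutive) is the same lemma yet again, not something weaker. The statement is true and your outline is repairable along these lines, but as written the central combinatorial step is asserted rather than proved.
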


	\begin{definition}

	\label{def: incremental GC}

A map 
~$	g\colon{A}
	\rightarrow{B}
$
of finite linearly ordered sets that is injective and whose image is an interval is called \emph{incremental}.
\end{definition}

	\begin{definition}

	\label{def: restriction GC}

We define a \emph{restriction} operation on globular cardinals. %
Given two consecutive ~\MathText{n}{-vertices} of a globular cardinal ~$X$ the operation returns the globular cardinal which we might call the hom-set determined by the two ~\MathText{n}{-vertices}. %
Let ~
$	{u_{n}}\colon\GlobDomain\rightarrow\GlobDomain
$
be the functor defined by ~${u_{n}}(i)=i+n$. %
Let ~$X$ be a globular cardinal with ~$y$ and ~$z$ consecutive vertices of ~$X_{n}$. %
Define the 
\emph{\SubThingText{X}{y,z}}
denoted ~$\SubThing{X}{y,z}$ as the largest subfunctor of 
~$	X\circ u_{n+1}
$ 
such that ~
$	y\GCrel x\GCrel z
$ for all ~ $x\in\SubThing{X}{y,z}$. %
Notice that ~$	X(\sigma)x=y
$ 
and 
~$	X(\tau)x=z
$ 
for all 
~$	x\in\SubThing{X}{y,z}_{0}\subseteq{X_{n+1}}
$. %

We have an inclusion
~$	\iota_{y,z}
	\colon\SubThing{X}{y,z}
	\rightarrow{X}\circ u_{n+1}^\X
$ 
of functors for every pair of consecutive vertices ~$y,z\in X_n$. %
The 
\emph{\SubThingText{f\colon{X}\rightarrow{Y}}{y,z}}
denoted
$	\SubThing{f}{y,z}	
$
is the lifting in 
\[\bfig
\square(0,0)|ammb|/-->`->`->`->/
	<900,300>[\SubThing{X}{y,z}
			`\SubThing{Y}{f_n^\X y,f_n^\X z}
			`X\circ u_{n+1}^\X`Y\circ u_{n+1}^\X
		;\SubThing{f}{y,z}```f\cdot u_{n+1}^\X]
\efig
\] 
of ~$f\cdot u_{n+1}^\X$ along the inclusions 
~$	\iota_{y,z}^\X
$ 
and 
~$	\iota_{f_{n}y,f_{n}z}^\X
$. %
\end{definition}

	\begin{definition}

	\label{def: suspension GC}

We define a \emph{suspension} operation on collections of globular cardinals. 
Let ${A}=\{x_0,x_1,\ldots,x_p\}$ 
be a finite linearly ordered set
and let $X(i)$ be a globular cardinal for each ~
$	i\in
	\ExceptFirst{{A}}$. %
We refer to ~${A}$ and the ~$X(i)$ collectively as a \emph{matched set} below. %
Define the \emph{\SuspendThingText{X(i)}{A}} denoted ~$\SuspendThing{X(i)}{A}$ as follows. %
Define set maps 
~$	{s}(i),{t}(i)
	\colon X(i)_{0}
	\rightarrow {A}
$ by 
~$	{s}(i)(y)=\Pred{i} 
$ 
and 
~$	{t}(i)(y)=i 
$ 
for  
~$	y\in X(i)_{0}
$ and for each ~$i\in\ExceptFirst{A}$. %
Then the suspension ~$\SuspendThing{X(i)}{A}$ is 
$$ \cdots 
\two/->`->/<500>^{\sum{s}_{1}(i)}
				_{{\sum{t}_{1}(i)}} 
\sum
	X(i)_{1}
\two/->`->/<500>^{\sum{s}_{0}(i)}
				_{\sum{t}_{0}(i)} 
\sum
	X(i)_{0}
\two/->`->/<500>^{\sum{s}(i)}
				_{\sum{t}(i)}
{A}
$$ 
where the coproducts are indexed over all ~
$i\in \ExceptFirst{A}$. 
Notice that 
$	{s}(i) s_{0}(i)
=	{s}(i) t_{0}(i)
$
as ~${s}(i)$ is constant. %
In addition as the required identities hold for each ~$X(i)$ then the universal property of  coproduct in ~$\CatName{Set}$ implies that \[
	\sum{s}(i)_{n} 
	\sum{s}(i)_{n+1}
	= 
	\sum{s}(i)_{n} 
	\sum{t}(i)_{n+1}
\] for ~$n\in\mathbb{N}_{+}$. %
Likewise for the target maps. %
Hence the source and target identities required of globular cardinals are satisfied by the suspension. %
The linear order given by the source and target maps is 
\[	\{
	x_{0}, X(1), x_{1}, X(2), x_{2}, 
	\ldots, x_{n-1}, X(n), x_{n}
	\}.
\]
Hence ~$\SuspendThing{X(i)}{A}$ is a globular cardinal. %

Similarly, we define a suspension operation on collections of morphisms of globular cardinals.
Let 
~$	X(i),A
$ 
and ~$Y(j),B$ be matched sets and let ~$X$ and ~$Y$ be their suspensions as defined above. %
Let 
~$	{f}\colon{A}\rightarrow{B}
$
be an incremental morphism of ordered sets and let 
~$	f(i) 
	\colon X(i) 
	\rightarrow Y({f} i) 
$ 
be a morphism of globular cardinals for each 
~$	i\in\ExceptFirst{A} 
$. %
Define the \emph{\SuspendThingText{f(i)}{f}} denoted ~$\SuspendThing{f(i)}{f}$ as 
$$\bfig
\square(500,0)|amma|
		/{@{>}@<3pt>}`{}`{->}`{@{>}@<3pt>}/
		[\cdots`X_2`\cdots`Y_2
		;\phantom{s}
			`\cdots
			`\sum f(i)_{1}
			`\phantom{s}
		]
\square(1000,0)|amma|
		/{@{>}@<3pt>}`{}`{->}`{@{>}@<3pt>}/
		[X_2`X_1`Y_2`Y_1
		;\phantom{s}
			`\sum f(i)_{1}
			`\sum f(i)_{0}
			`\phantom{s}
		]
\square(1500,0)|amma|
		/{@{>}@<3pt>}`{}`{->}`{@{>}@<3pt>}/
		[X_1`A`Y_1`\phantom{.}B
		;\phantom{s}
			`\sum f(i)_{0}
			`{f}
			`\phantom{s}
		]

\square(500,0)|bmmb|
		/{@{>}@<-3pt>}`{}`{->}`{@{>}@<-3pt>}/
		[\cdots`X_2`\cdots`Y_2
		;\phantom{t}
			`\cdots
			`\sum f(i)_{1}
			`\phantom{t}
		]
\square(1000,0)|bmmb|
		/{@{>}@<-3pt>}`{}`{->}`{@{>}@<-3pt>}/
		[X_2`X_1`Y_2`Y_1
		;\phantom{t}
			`\sum f(i)_{1}
			`\sum f(i)_{0}
			`\phantom{t}
		]
\square(1500,0)|bmmb|
		/{@{>}@<-3pt>}`{}`{->}`{@{>}@<-3pt>}/
		[X_1`A`Y_1`\phantom{.}B
		;\phantom{t}
			`\sum f(i)_{0}
			`{f}
			`\phantom{t}
		]
\efig$$
where the coproducts are indexed over ~
$\ExceptFirst{A}$. %
The squares commute by universal property of  coproduct in ~$\CatName{Set}$ and the suspension
~$	\SuspendThing{f(i)}{f}
$ 
is a morphism of globular cardinals. %
\end{definition}

%



	\subsection{Ordinal graphs}
	
	\label{sec: ordinal graphs}
	
The purpose of this section is to define the concept of an enriched graph called a  ~$\mathcal{V}\text{-graph}$ to allow an inductive definition equivalent to that of globular cardinals. %
An equivalence is demonstrated between the categories of
globular cardinals and ordinal graphs. %
The section closes with the definition of a map that 
sends objects of $\iPidi$ to ordinal graphs. 

	\begin{definition} 
	
A \emph{$\mathcal{V}\text{-graph}$} ~$\mathcal{G}$, for a category ~$\mathcal{V}$, consists of a set of vertices ~$\Root{\mathcal{G}}$ and an \emph{edge-object} ~$\mathcal{G}(x,y)$ in ~$\Obj\mathcal{V}$ for every pair of vertices ~$x,y$. %
A \emph{morphism 
of 
~$	\mathcal{V}\text{-graphs}
$} 
~$	f\colon\mathcal{G}\rightarrow\mathcal{H}
$ 
is a set map 
~$	f\colon\Root{\mathcal{G}}
	\rightarrow\Root{\mathcal{H}}
$
and for every edge-object 
~$	\mathcal{G}(x,y)
$ 
of ~$\mathcal{G}$ a morphism 
\[	f(x,y)
	\colon\mathcal{G}(x,y)
	\rightarrow\mathcal{H}(f x,f y)
\] 
of ~$\mathcal{V}$. %
We have the category ~$\mathcal{V}\text{-Gph}$ of ~$\mathcal{V}\text{-graphs}$, graphs enriched over ~$\mathcal{V}$.
\end{definition}

	\begin{definition} \label{def: ordinal graphs} 
	
Suppose ~$\mathcal{V}$ has initial object 0 and ~$\mathcal{U}$ is a subset of ~$\Obj\mathcal{V}$ containing 0. %
A \emph{~$\mathcal{U}$ ordinal ~$\mathcal{V}\text{-graph}$} ~$\mathcal{G}$ consists of a finite linearly ordered set ~$\Obj\mathcal{G}$, objects ~$\mathcal{G}(x,y)$ in ~$\mathcal{U}$ of ~$\mathcal{V}$ for all pairs $x, y$ in ~$\Obj\mathcal{G}$, and such that ~
$	\mathcal{G}(x,y) \not= 0$ if and only if
~$y$ is the successor of ~$x$. %
We have the category ~$(\mathcal{U},\mathcal{V})\text{-Gph}_\mathrm{ord}$ of ~$\mathcal{U}$ ordinal ~$\mathcal{V}\text{-graphs}$. %
Notice that the object maps are incremental. %
\end{definition}

	\begin{definition} 
	
We define two categories ~$\Graph_\mathbb{N}$ and ~$\OGraph$ of enriched graphs. %
Let ~$\Graph_0$ denote the category
~$	\{ \emptyset \}\text{-Gph}
$ 
and let 
~$	\Graph_{n+1}
$ 
denote the category
~$	\Graph_n\text{-Gph}
$ 
for each ~$n\in\mathbb{N}$. %
Define ~$\Graph_\mathbb{N}$ as the colimit of the diagram 
\[	\Graph_0
	\rightarrow \Graph_1
	\rightarrow \cdots 
	\rightarrow \Graph_n
	\rightarrow \cdots
\]
of inclusions. %
The empty graph ~$\emptyset$ has \emph{dimension} \TextMinusOne\ (minus one). %
A graph  of ~$\Graph_n$ has \emph{dimension} ~$n$.

We define ~$\OGraph$ the category of \emph{ordinal graphs}, a subcategory of ~$\Graph_\mathbb{N}$, which we demonstrate in Theorem \ref{thm: globcard to ograph} is equivalent to the category of globular cardinals. %
Let ~$\OGraph_0$ denote the category ~$( \emptyset,\emptyset ) \text{-Gph}_\mathrm{ord}$ and let ~$\OGraph_{n+1}$ denote the category ~$(\OGraph_n,\Graph_n)\text{-Gph}$ for ~$n\in\mathbb{N}$. %
Define ~$\OGraph$ as the colimit of the diagram
\[	\OGraph_{0}
	\rightarrow \OGraph_{1}
	\rightarrow \cdots 
	\rightarrow \OGraph_{n}
	\rightarrow \cdots
\]
of inclusions. %
\end{definition}

	\begin{definition}
	
	\label{def: functor globcard to ograph}
	
We define a functor 
\begin{align*}
	\FunctorGlobCardToOGraph
	\colon\GlobCard
	\rightarrow\OGraph
\end{align*} 
using induction on the dimension of globular cardinals. %
Let ~$X$ be a globular cardinal of dimension -1 (minus one). %
Then ~$X$ is empty and so an initial object. %
Define $\FunctorGlobCardToOGraph X$ as the empty ordinal graph. %
Assume ~$\FunctorGlobCardToOGraph$ is defined on globular cardinals of dimension $n$ and let ~$X$ have dimension ~$n+1$. %
Define 
~$	\FunctorGlobCardToOGraph X
$ as ~ $\mathcal{G}
$
where 
~$	\Root{\mathcal{G}}
=	X_0
$
and 
~$	\mathcal{G}(\Pred{x},x)
=	\FunctorGlobCardToOGraph 
		\SubThing{X}{\Pred{x},x}
$
for each $x\in\ExceptFirst{X_0}$. %

Define  
~$	\FunctorGlobCardToOGraph
$ 
on morphisms using induction on the dimension of their domain. %
Let 
~$	f\colon{X}\rightarrow{Y}
$ 
be a morphism with domain of dimension \TextMinusOne\ (minus one). %
Define 
~$	\FunctorGlobCardToOGraph f
$ as
~$	\emptyset 
	\rightarrow\FunctorGlobCardToOGraph{Y}
$ the unique morphism out of the empty graph. %
Assume ~$\FunctorGlobCardToOGraph$ is defined on morphisms with domain of dimension ~$n$ and let 
~$	f\colon{X}\rightarrow{Y}
$ 
be a morphism with domain of dimension ~$n+1$. %
Define 
~$	\FunctorGlobCardToOGraph f
$ as ~ $g$ with object map
~$	f_0
	\colon{X_0}
	\rightarrow{Y_0}
$ 
and 
~$	g(\Pred{x},x)
=	\FunctorGlobCardToOGraph f(\Pred{x},x)
$ for each ~$x \in\ExceptFirst{X_0}$. %
\end{definition}

	\begin{definition}
	
	\label{def: functor ograph to globcard}
	
We define a functor 
\begin{align*}
	\FunctorOGraphToGlobCard
	\colon\OGraph
	\rightarrow\GlobCard
\end{align*} 
using induction on the dimension of ordinal graphs. %
Let ~$\mathcal{G}$ be an ordinal graph of dimension \TextMinusOne\ (minus one). %
Then ~$\mathcal{G}$ is empty and so is initial. %
Define $\FunctorOGraphToGlobCard \mathcal{G}$ as the initial globular cardinal. %
Assume ~$\FunctorOGraphToGlobCard$ is defined on ordinal graphs of dimension $n$ and let $\mathcal{G}$ have dimension ~$n+1$. %
Define 
~$	\FunctorOGraphToGlobCard\mathcal{G}
$ as ~ 
$	\SuspendThing
			{\FunctorOGraphToGlobCard
				\mathcal{G}(\Pred{x},x)}
			{\Root{\mathcal{G}}}
$ where ~$\mathcal{G}(\Pred{x},x)$ is the collection of non-empty ordinal graphs of ~$\mathcal{G}$ indexed by ~$x\in\ExceptFirst{\Root{\mathcal{G}}}$. %

Define
~$	\FunctorOGraphToGlobCard
$ 
on morphisms using induction on the dimension of their domain. %
Let 
~$	g\colon{\mathcal{G}}\rightarrow{\mathcal{H}}
$ 
be a morphism with domain of dimension \TextMinusOne\ (minus one). %
Define 
~$	\FunctorOGraphToGlobCard g
$ as ~
$	
	\emptyset 
	\rightarrow\FunctorOGraphToGlobCard{\mathcal{H}}
$ the unique morphism out of the empty ordinal graph. %
Assume ~$\FunctorOGraphToGlobCard$ is defined on morphisms with domain of dimension ~$n$ and let 
~$	g
$ 
be a morphism with domain of dimension ~$n+1$. %
Define 
~$	\FunctorOGraphToGlobCard g
$ as the suspension
~$\SuspendThing
			{\FunctorOGraphToGlobCard 
				g(\Pred{x},x)
			}
			{g}
$ where ~$g(\Pred{x},x)$ is the collection of morphisms of ordinal graphs of ~$g$ indexed by ~
$x\in\ExceptFirst{\Root{\mathcal{G}}}$. %

\end{definition}

	\begin{theorem}
	
	\label{thm: globcard to ograph}
	
The category ~$\GlobCard$ is equivalent to the category ~$\OGraph$ by
\[	\FunctorGlobCardToOGraph \colon
		\GlobCard\rightarrow\OGraph
\]
and its equivalence inverse 
~$	\FunctorOGraphToGlobCard$. %
\end{theorem}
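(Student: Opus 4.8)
The plan is to prove that $\FunctorGlobCardToOGraph$ and $\FunctorOGraphToGlobCard$ are mutually inverse equivalences by induction on dimension, the overall architecture mirroring the proof of Theorem~\ref{thm: 121 iDisc}. The conceptual core is that the restriction operation of Definition~\ref{def: restriction GC} and the suspension operation of Definition~\ref{def: suspension GC} are mutually inverse on globular cardinals, exactly as the subtree and suspension operations were for trees. I would therefore first isolate two lemmas and then assemble them into natural isomorphisms $\FunctorOGraphToGlobCard\FunctorGlobCardToOGraph\cong\Id$ and $\FunctorGlobCardToOGraph\FunctorOGraphToGlobCard\cong\Id$.

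The first lemma is the globular-cardinal analogue of Observation~\ref{obs: id = sub suspend}: for a matched set $X(i),A$ and any $x\in\ExceptFirst{A}$, the restriction $\SubThing{\SuspendThing{X(i)}{A}}{\Pred x, x}$ of the suspension over the consecutive pair $\Pred x, x$ of $A$ is isomorphic to the summand $X(x)$, via the coprojection. Its proof should be essentially the one already given for trees: the source and target maps of the suspension send the summand $X(i)_0$ to $\Pred i$ and to $i$, so $X(x)$ is precisely the part lying strictly between $\Pred x$ and $x$, and since coprojections into a coproduct in $\CatName{Set}$ are monomorphisms this identifies the largest such subfunctor with $X(x)$. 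The second lemma is the reverse reconstruction: for a globular cardinal $X$ there is an isomorphism $X\cong\SuspendThing{\SubThing{X}{\Pred x, x}}{X_0}$, natural in $X$, recovering $X$ from its hom-restrictions over consecutive $0$-vertices. Here I would use that the order on $X$ is linear and that, by Observation~\ref{obs: globcard incremental} applied to the structure maps, each vertex of positive height sits strictly between a unique consecutive pair of $0$-vertices; this partitions the vertices of positive height exactly as the coproduct in the suspension, and one checks that the resulting source and target maps agree with those prescribed by Definition~\ref{def: suspension GC}.

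With these lemmas in place I would run the induction. In dimension $-1$ both functors exchange the unique initial objects, so the composites are the identity. Assuming the composites are naturally isomorphic to the identity through dimension $n$, take $X$ of dimension $n+1$; then $\FunctorOGraphToGlobCard\FunctorGlobCardToOGraph X=\SuspendThing{\FunctorOGraphToGlobCard\FunctorGlobCardToOGraph\SubThing{X}{\Pred x, x}}{X_0}$, the inductive hypothesis gives $\FunctorOGraphToGlobCard\FunctorGlobCardToOGraph\SubThing{X}{\Pred x, x}\cong\SubThing{X}{\Pred x, x}$, and the second lemma yields $\FunctorOGraphToGlobCard\FunctorGlobCardToOGraph X\cong X$. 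Dually, for an ordinal graph $\mathcal{G}$ of dimension $n+1$ one has $(\FunctorOGraphToGlobCard\mathcal{G})_0=\Root\mathcal{G}$ by construction of the suspension, so $\Root(\FunctorGlobCardToOGraph\FunctorOGraphToGlobCard\mathcal{G})=\Root\mathcal{G}$; the first lemma identifies $\SubThing{\FunctorOGraphToGlobCard\mathcal{G}}{\Pred x, x}$ with $\FunctorOGraphToGlobCard\mathcal{G}(\Pred x, x)$, the inductive hypothesis gives $\FunctorGlobCardToOGraph\FunctorOGraphToGlobCard\mathcal{G}(\Pred x, x)\cong\mathcal{G}(\Pred x, x)$, and assembling these edge-wise produces $\FunctorGlobCardToOGraph\FunctorOGraphToGlobCard\mathcal{G}\cong\mathcal{G}$.

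Finally I would verify that the object-level isomorphisms just produced are natural, again by induction on dimension. Since $\FunctorGlobCardToOGraph$ sends a morphism $f$ to the pair consisting of $f_0$ together with the restricted maps $\SubThing{f}{\Pred x, x}$, and $\FunctorOGraphToGlobCard$ sends a morphism $g$ to the suspension $\SuspendThing{\FunctorOGraphToGlobCard g(\Pred x, x)}{g}$, naturality reduces to the compatibility of the restriction and suspension of a morphism with the isomorphisms of the two lemmas, which is the same bookkeeping underlying the full and faithful parts of Theorem~\ref{thm: 121 iDisc}. The hard part will be the second lemma: one must check that reassembling the hom-restrictions recovers not merely the vertex sets but the entire globular structure of $X$ — all source and target maps, and hence the linear order — and that the reconstruction is natural. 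Tracking the coproduct choices in the suspension is also what forces the conclusion to be an equivalence rather than a strict isomorphism of categories.
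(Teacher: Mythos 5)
Your proposal is correct and takes essentially the same route as the paper: the paper's proof also proceeds by induction on dimension, constructing natural isomorphisms $\Id \Rightarrow \FunctorOGraphToGlobCard\FunctorGlobCardToOGraph$ and $\FunctorGlobCardToOGraph\FunctorOGraphToGlobCard \Rightarrow \Id$ from exactly your two key facts, namely that the restriction of a suspension over a consecutive pair of $0$-vertices is (identically) the corresponding coprojection summand, and that a globular cardinal is recovered from the suspension of its hom-restrictions because the coprojections are monomorphisms and the inclusions of the restrictions are jointly epimorphic, with naturality coming from the universal property of the coproduct. The only difference is organizational: the paper embeds these facts inside the inductive construction of the unit and counit rather than isolating them as standalone lemmas.
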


\begin{proof} 
We construct natural isomorphisms 
~$	\eta
	\colon\Id
	\Rightarrow
			\FunctorOGraphToGlobCard
			\FunctorGlobCardToOGraph
$ 
and
~$	\epsilon
	\colon
		\FunctorGlobCardToOGraph
		\FunctorOGraphToGlobCard
	\Rightarrow\Id
$ using induction on the dimension of globular cardinals (respectively ordinal graphs). %
Let ~$X$ be a globular cardinal of 
dimension \TextMinusOne\ (minus one). %
Then ~$X$ and ~$
	\FunctorOGraphToGlobCard
	\FunctorGlobCardToOGraph
	{X}
$ are both empty globular cardinals. %
Assume ~$\eta$ is a natural isomorphism for globular 
cardinals of dimension ~$n$ and let $X$ have 
dimension ~$n+1$. %
Let ~$
	X'
=	\FunctorOGraphToGlobCard
	\FunctorGlobCardToOGraph{X}
$ which is ~$
	\SuspendThing{\FunctorOGraphToGlobCard
					\FunctorGlobCardToOGraph
					 \SubThing{X}{\Pred{x},x}} {X_0}
$. %
We construct an isomorphism ~$
	f
	\colon{X}
	\rightarrow 
		\FunctorOGraphToGlobCard
		\FunctorGlobCardToOGraph{X}
$. %
Define ~$f_0$ as ~ $\Id_{X_0}$ and ~$f_n$ as the unique map out of the coproduct in 
$$\bfig
\square(0,0)|aaa|
	/->`->`->`-->/
	<1000,500>%
	[\FunctorOGraphToGlobCard
		\FunctorGlobCardToOGraph 
		X(\Pred{x},x)_n
	`X(\Pred{x},x)_n
	`\sum \FunctorOGraphToGlobCard
		\FunctorGlobCardToOGraph 
		X(\Pred{x},x)_n
	`X_{n+1}
	;\cong
	`\mathrm{copr}
	`\mathrm{incl}
	`f_n
	] 
\efig$$
for each ~$n\in\mathbb{N}_+$. %
Then ~$f$ is a bijection as the coprojections are monomorphisms and the composites with the inclusions are monomorphisms and are jointly epi. %
Naturality arises from the universal property of  coproduct. %

Let ~$\mathcal{G}$ be an ordinal graph of dimension -1 (minus one). %
Then ~$\mathcal{G}$ and ~$
	\FunctorGlobCardToOGraph
	\FunctorOGraphToGlobCard
	\mathcal{G}
$ 
are both empty ordinal graphs. %
Assume ~$\epsilon$ is a natural isomorphism for ordinal graphs of dimension ~$n$ and let $\mathcal{G}$ have dimension ~$n+1$. %
Define ~ $X$ as \[
	\FunctorOGraphToGlobCard
	{\mathcal{G}}
=	\SuspendThing{\FunctorOGraphToGlobCard
					\mathcal{G}(x_{i-1},x_i)
				}
				{\Root{\mathcal{G}}
				}. %
\]
The globular cardinal ~$\SubThing{X}{\Pred{x},x}$ is the largest subfunctor of ~$X$ satisfying the requirements of Definition ~\ref{def: restriction GC}. 
The coprojection ~\[
	\FunctorOGraphToGlobCard
					\mathcal{G}(\Pred{x},x)
	\rightarrow
	\sum\FunctorOGraphToGlobCard
					\mathcal{G}(\Pred{x},x)
\] is also such a functor and so is identically ~$\SubThing{X}{\Pred{x},x}$. %
Then 
~$	\mathcal{G}'
=	\FunctorGlobCardToOGraph X
=	\FunctorGlobCardToOGraph 
	\FunctorOGraphToGlobCard \mathcal{G}
$
has ~$\Root{\mathcal{G}'} = \Root{\mathcal{G}}$ 
and ~
$	\mathcal{G}'(\Pred{x},x) 
=
	\FunctorGlobCardToOGraph 
	\FunctorOGraphToGlobCard
	\mathcal{G}(\Pred{x},x)
$
which by induction is naturally isomorphic to 
~$	\mathcal{G}(\Pred{x},x)
$. %
Hence ~$\epsilon$ is a natural isomorphism since for any ordinal graph ~$\mathcal{G}$ then ~$\epsilon_\mathcal{G}$ consists of an identity and components which are natural isomorphisms.
\end{proof}

	\begin{definition}
	
	\label{def: pidi to ograph}
		
We define a map 
\[	\ObjMapiPidiToOGraph
	\colon\Obj\iPidi
	\rightarrow\Obj\OGraph
\]
by induction on the height of objects of $\iPidi$. %
Send the initial object ~$[-1]$ to the empty ordinal
graph. %
Assume ~$\ObjMapiPidiToOGraph$ is defined on objects
of height ~$n$ and let ~ $H$ have height ~$n+1$. %
Let ~$\ObjMapiPidiToOGraph H$ be the ordinal 
graph ~$	\mathcal{G}$ with ~$
	\Root{\mathcal{G}}=\Root{H}
$ and \[
	\mathcal{G}({i-1},{i})
=	\ObjMapiPidiToOGraph H(i)
\] for ~$i$ which are not endpoints of ~$
	(\Root{H})^{\wedge}
$. %
Recall ~$H(i)$ is trivial when ~$i$ is an endpoint of ~$(\Root{H})^\wedge$. %
Notice that objects of $\iPidi$ 
of height ~$n$ are sent to ordinal graphs of dimension ~$n-1$. %
	
Define 
~$	\ObjMapiPidiToOGraph'
	\colon\Obj\OGraph
	\rightarrow\Obj\iPidi
$ 
by induction on the dimension of ordinal graphs. 
Send the initial ordinal graph to the trivial object ~$[-1]$. 
Assume ~$\ObjMapiPidiToOGraph'$ is defined on ordinal graphs  of dimension ~$n$ and let ~$\mathcal{G}$ have dimension ~$n+1$. %
Suppose ~ 
$	\Root{\mathcal{G}} = \{x_0,x_1,\ldots,x_p\}
$. %
Define
~$	\ObjMapiPidiToOGraph'\mathcal{G}
$ 
as the object 
~$H$ with~ 
$	\Root{H}=[p]
$, 
with ~$H(0)$ and ~$H(p+1)$ the trivial object of ~$\iPidi$ 
and with 
~$	H(i) = \ObjMapiPidiToOGraph' 
			\mathcal{G}(x_{i-1},x_i))
$
for each ~$x\in\ExceptFirst{[p]}$. %
Notice that the ordinal graphs of dimension ~$n$ are sent to objects of $\iPidi$ 
of height ~$n+1$. %

\end{definition}

	\begin{theorem}
	
	\label{theorem: iPidi to OGraph}

The object map
~$	\ObjMapiPidiToOGraph
$ 
is surjective. 
\end{theorem}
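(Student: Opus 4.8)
The plan is to show that the object map $\ObjMapiPidiToOGraph'$ of Definition~\ref{def: pidi to ograph} is a section of $\ObjMapiPidiToOGraph$; that is, that $\ObjMapiPidiToOGraph\circ\ObjMapiPidiToOGraph'=\Id_{\Obj\OGraph}$. Surjectivity of $\ObjMapiPidiToOGraph$ is then immediate, since every ordinal graph $\mathcal{G}$ equals $\ObjMapiPidiToOGraph(\ObjMapiPidiToOGraph'\mathcal{G})$. I would prove this identity by induction on the dimension of ordinal graphs, paralleling the inductive constructions of $\ObjMapiPidiToOGraph$ and $\ObjMapiPidiToOGraph'$.

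For the base case, the empty ordinal graph (dimension \TextMinusOne) is sent by $\ObjMapiPidiToOGraph'$ to the trivial object $[-1]$ of $\iPidi$, which $\ObjMapiPidiToOGraph$ returns to the empty ordinal graph. For the inductive step, suppose the identity holds for all ordinal graphs of dimension less than $n+1$, and let $\mathcal{G}$ have dimension $n+1$ with $\Root{\mathcal{G}}=\{x_0,x_1,\ldots,x_p\}$. By Definition~\ref{def: pidi to ograph} the object $H:=\ObjMapiPidiToOGraph'\mathcal{G}$ has $\Root{H}=[p]$, has $H(0)$ and $H(p+1)$ trivial, and has $H(i)=\ObjMapiPidiToOGraph'\mathcal{G}(x_{i-1},x_i)$ for each non-endpoint $i$ of $(\Root{H})^{\wedge}=[p+1]$.

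I would then compute $\mathcal{G}':=\ObjMapiPidiToOGraph H$ and compare it with $\mathcal{G}$. By the definition of $\ObjMapiPidiToOGraph$ we have $\Root{\mathcal{G}'}=\Root{H}=[p]$, and for each non-endpoint $i$ of $[p+1]$ the edge-object is
\[
	\mathcal{G}'(i-1,i)
	=\ObjMapiPidiToOGraph H(i)
	=\ObjMapiPidiToOGraph\bigl(\ObjMapiPidiToOGraph'\mathcal{G}(x_{i-1},x_i)\bigr).
\]
Each $\mathcal{G}(x_{i-1},x_i)$ has dimension strictly less than $n+1$, so the inductive hypothesis gives $\ObjMapiPidiToOGraph(\ObjMapiPidiToOGraph'\mathcal{G}(x_{i-1},x_i))=\mathcal{G}(x_{i-1},x_i)$; the remaining edge-objects of both $\mathcal{G}$ and $\mathcal{G}'$ are the initial object, so $\mathcal{G}$ and $\mathcal{G}'$ agree on every edge-object.

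The one point needing care is the bookkeeping of vertex sets. The map $\ObjMapiPidiToOGraph'$ rebuilds the vertex set as the standard ordinal $[p]$, so $\Root{\mathcal{G}'}=[p]$, whereas $\Root{\mathcal{G}}=\{x_0,\ldots,x_p\}$ was presented as an arbitrary finite linearly ordered set on $p+1$ elements; the two are matched by the unique order-isomorphism $x_j\mapsto j$, under which the successor pair $(x_{i-1},x_i)$ corresponds to $(i-1,i)$. Adopting the skeletality convention that the vertex set of an ordinal graph is a standard ordinal (as was already used in the proof of Theorem~\ref{thm: 121 iDisc}) makes this a literal equality, yielding $\mathcal{G}'=\mathcal{G}$. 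The main obstacle is thus not conceptual but precisely this vertex-set identification; once it is fixed by the convention, $\ObjMapiPidiToOGraph\circ\ObjMapiPidiToOGraph'=\Id$ follows and $\ObjMapiPidiToOGraph$ is surjective.
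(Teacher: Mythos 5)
Your proposal is correct and follows essentially the same route as the paper: the paper's proof also proceeds by induction on dimension to show that $\ObjMapiPidiToOGraph\ObjMapiPidiToOGraph'\mathcal{G}=\mathcal{G}$ (i.e.\ that $\ObjMapiPidiToOGraph'$ is a section of $\ObjMapiPidiToOGraph$), concluding surjectivity. Your explicit treatment of the vertex-set identification $\{x_0,\ldots,x_p\}\leftrightarrow[p]$ is a detail the paper passes over silently when it asserts $\Obj\mathcal{G}=\Obj\mathcal{G}'$, so your version is if anything slightly more careful.
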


	\begin{proof}
	
Clearly $\ObjMapiPidiToOGraph$ is surjective on 
ordinal graphs of dimension \TextMinusOne\ (minus one). %
Assume that ~$\ObjMapiPidiToOGraph$ is surjective on 
ordinal graphs of dimension ~$n$, let $\mathcal{G}$ 
be an ordinal graph of dimension ~$n+1$ and let $
	\mathcal{G}{'} 
=	\ObjMapiPidiToOGraph
	\ObjMapiPidiToOGraph{'}
	\mathcal{G}
$. %
Then $
	\Obj\mathcal{G} 
=	\Obj\mathcal{G}{'}
$ and, by induction, 
for each $
	i\in\ExceptFirst{\Obj\mathcal{G}}
$ we have
$ 
	\SubThing{\mathcal{G}}{\Pred{i},i}
=	\SubThing{\mathcal{G}{'}}{\Pred{i},i}
$. %
Hence $
	\mathcal{G}=\mathcal{G}{'}
$ and $\ObjMapiPidiToOGraph$ is surjective.
\end{proof}

	\subsection{$\omega\text{-categories}$}
	
	\label{sec: omega cats}

We start by recalling the definition 
of \TextCategory{\omega}. %
Definition \ref{def: glob free} is adapted from Michael Batanin's construction in \cite{MB_MGC} of the free \TextCategory{\omega} on a globular set.
In addition we define the free \TextCategory{\omega} 
on an ordinal graph and show that it is isomorphic 
to the free \TextCategory{\omega} of the corresponding
globular cardinal. 

	\begin{definition} \label{def: omega category}
	
A \emph{\TextCategory{1}} is an ordinary category and a \emph{\TextCategory{0}} is a discrete \TextCategory{1}. %
An \emph{\TextCategory{n}} is a category enriched 
over 
\TextCategories{(n-1)} 
for ~$n\in\mathbb{N}_+$. %
Then ~$\wCat$, the category of \emph{\TextCategories{\omega}}, is the colimit of the diagram 
\[	\nCat{0} 
	\rightarrow \nCat{1} 
	\rightarrow \nCat{2} 
	\rightarrow \cdots 
	\rightarrow \nCat{n} 
	\rightarrow \cdots
\]
of inclusions. %
The initial \TextCategory{\omega} has \emph{dimension} \TextMinusOne\ (minus one). %
The \TextCategories{\omega} of ~$\nCat{n}$ have \emph{dimension} ~$n$. %
\end{definition}

	\begin{definition}
	
Let ~$Y$ be a globular cardinal with  ~$x$ and ~$y$ consecutive \MathText{n}{-vertices}. %
Define
$	\CompThing{Y}{x,y}
$ as the largest subfunctor of ~$Y$ such that ~
$	\CompThing{Y}{x,y}_n=\{x,y\}
$. %
We have the inclusion 
~$	\CompThing{\iota}{x,y}
	\colon
	\CompThing{Y}{x,y}
	\rightarrow{Y}
$. %
Let ~ $\gamma$ be a morphism of globular cardinals. %
Define
$	\CompThing{\gamma}{x,y}
$ as the composite ~
$	\gamma\circ\CompThing{\iota}{x,y}
$. 
\end{definition}

	\begin{definition} \label{def: glob free}
	
We define a functor
\begin{align*}
	\FreeFunctor
	\colon\GlobCard
	\rightarrow\wCat. 
\end{align*} %
Let ~$X$ be a globular cardinal. %
The ~\MathText{n}{-cells} of ~$\FreeFunctor{X}$ 
are isomorphism classes of objects of ~ 
$	\GlobCard/X
$ which are globular morphisms ~
$	\gamma\colon Y\rightarrow{X}
$ where ~ $Y$ has dimension ~$n$. %

Let ~$Y$ be a globular cardinal. %
We define the ~\MathText{m}{-source} of ~$Y$ denoted ~$\GCsrc{m} Y$ (respectively the ~\MathText{m}{-target} of ~$Y$ denoted ~$\GCtgt{m} Y$) for ~$m\in\mathbb{N}$. %
Define ~$\GCsrc{0} Y$ (respectively ~$\GCtgt{0} Y$) as the smallest subfunctor of ~$Y$ containing the least (respectively greatest) element of ~$Y$. %
For ~$m\ge 1$ define ~$\GCsrc{m} Y$ 
(respectively ~$\GCtgt{m} Y$) as the smallest subfunctor of ~$Y$ containing ~$Y_\ell$ for all 
~$\ell<m$ and containing the least (respectively greatest) element of ~
$	\CompThing{Y}{\Pred{y},y}_m
$ for all ~$y$ in ~
$	\ExceptFirst{Y_{m-1}}
$. %
Given an \MathText{n}{-cell} ~ $\gamma$
(representing an isomorphism class) 
then ~$\GCdom{m}(\gamma)$ is (the isomorphism
class of) the composite
\[
\GCsrc{m} Y
\to/->/<500>^{\mathrm{incl}}
Y
\to/->/<500>^{\gamma}
X.
\] %
Likewise for ~$\GCcod{n}(\gamma)$. %

Composition in ~$\FreeFunctor{X}$ is given by pushout. %
Let ~ 
$\alpha\colon{Y}\rightarrow{X}$ 
and ~
$\beta\colon{Z}\rightarrow{X}$ 
be
\MathText{n}{-cells} with ~ 
$	\GCdom{m}(\beta)=\GCcod{m}(\alpha)
$. %
There is a unique isomorphism ~ 
$	\delta
	\colon\GCsrc{m}Z\rightarrow\GCtgt{m}Y
$ such that ~ 
$	\GCdom{m}\beta=\GCcod{m}\alpha\circ\delta
$ where composition and equality is of globular morphisms. %
Define  ~ 
$	\beta\nComp{m}{}\alpha$ 
as the unique morphism 
in $$\bfig
\node TL(0,0)[\GCsrc{m}Z]
\node TR(600,0)[Z]
\node BL(0,-600)[Y]
\node BR(600,-600)[P]
\node X(850,-850)[X]
\arrow/->/[TL`TR;\mathrm{incl}]
\arrow/->/[TL`BL;\delta]
\arrow/->/[TR`BR;]
\arrow/->/[BL`BR;]
\arrow|b|/{@{->}@/_5pt/}/[BL`X;\alpha]
\arrow/{@{->}@/^5pt/}/[TR`X;\beta]
\arrow/-->/[BR`X;]
\efig
$$
out of the pushout ~$P$ which is defined as ~ 
$	Z\backslash\GCsrc{m}Z + Y
$. %
Specifically, we have ~ $P_\ell=Y_\ell=Z_\ell$ 
for ~ $\ell<m$, ~ 
$	P_m=Z_m\backslash(\GCsrc{m}Z)_m + Y_m
$ and ~ 
$	P_\ell=Y_\ell + Z_\ell
$ for ~ $\ell>m$ where the ~$+$ operation is the 
ordered union of linearly ordered sets. %
Let ~ $\circ$ denote \MathText{0}{-composition}. %

An \MathText{n}{-cell} $\gamma$ is 
\emph{indecomposable} when the cardinality of ~
$	\CompThing{Y}{\Pred{y},y}_m
$ is 2 (two) for all ~ $m<n$ and is 1 (one) 
when ~ $m=n$.
An \MathText{n}{-cell} $\gamma$ is 
\emph{\MathText{0}{-indecomposable}} when the
cardinality of ~ 
$	Y_0
$ is less than or equal to 2 (two). %
An \MathText{n}{-cell} $\gamma$ is 
\emph{\MathText{m}{-indecomposable}} 
(for ~ $m\ge 1$) when the
cardinality of ~ 
$	\CompThing{Y}{\Pred{y},y}_m
$ is less than or equal to 2 (two) for all ~
$	y\in \ExceptFirst{Y_{m-1}}
$. %
\end{definition}

Observations \ref{obs: decomp}, \ref{obs: dom sub} and \ref{obs: comp sub} are referred to in Definition \ref{def: two free functors}. 

	\begin{observation} \label{obs: decomp}

We identify an arbitrary 
\MathText{n}{-cell}  of ~ $\FreeFunctor{X}$ 
with a canonical \MathText{0}{-composition} 
of \MathText{0}{-indecomposable} 
\MathText{n}{-cells}. %
Let ~$\gamma\colon{Y}\rightarrow{X}$ be an  
\MathText{n}{-cell} where ~$Y_0$ is ~$
	\{y_0, y_1,\ldots,y_p\}
$. %
We have
\[	\gamma
=	\CompThing{\gamma}{y_{p-1},y_p}
	\circ \ldots \circ
	\CompThing{\gamma}{y_0,y_1}. %
\]
We denote 
this composite as ~ $\DeComp{\gamma}{y}$. %
and understand that ~ $y$
is an index over ~ $\ExceptFirst{Y_0}
$. %
\end{observation}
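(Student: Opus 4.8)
The plan is to verify that the right-hand side $\DeComp{\gamma}{y}$ is a legitimate iterated $0$-composite and that evaluating it returns $\gamma$. I would argue in three stages, using the pushout description of composition in $\FreeFunctor{X}$ together with the structure of globular cardinals. Throughout, write the linear order on the $0$-vertices as $y_0 \GCrel y_1 \GCrel \cdots \GCrel y_p$, so that the consecutive pairs $(y_{i-1},y_i)$ for $i\in\{1,\ldots,p\}$ are exactly those indexed by $\ExceptFirst{Y_0}$, and each $\CompThing{Y}{y_{i-1},y_i}$ is well defined.

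First I would check composability. Since the least and greatest $0$-vertices of $\CompThing{Y}{y_{i-1},y_i}$ are $y_{i-1}$ and $y_i$, we have $\GCsrc{0}\CompThing{Y}{y_{i-1},y_i}=\{y_{i-1}\}$ and $\GCtgt{0}\CompThing{Y}{y_{i-1},y_i}=\{y_i\}$, each a single-vertex $0$-dimensional subfunctor. Hence $\GCcod{0}\CompThing{\gamma}{y_{i-1},y_i}$ and $\GCdom{0}\CompThing{\gamma}{y_i,y_{i+1}}$ both pick out the vertex $\gamma(y_i)\in X_0$, the connecting isomorphism $\delta$ is the identity on $\{y_i\}$, and every consecutive pair is $0$-composable. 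Thus $\DeComp{\gamma}{y}$ is defined.

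Second I would identify the underlying globular cardinal of the composite with $Y$. By the pushout formula, $0$-composition glues domains along the shared $0$-vertex and takes the ordered union in every dimension $\ell\ge 1$; iterating, the domain of $\DeComp{\gamma}{y}$ has $0$-vertices $\{y_0,\ldots,y_p\}=Y_0$ and, in each higher dimension, the ordered union $\sum_i \CompThing{Y}{y_{i-1},y_i}_\ell$. The crucial structural point is that in a globular cardinal every vertex $z$ of positive dimension has iterated $0$-source and $0$-target that are \emph{consecutive} $0$-vertices, so that $\CompThing{Y}{y_{i-1},y_i}$ consists precisely of those vertices lying (in the order $\GCrel$) strictly between $y_{i-1}$ and $y_i$, together with $y_{i-1},y_i$ themselves. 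Consequently the pieces partition the positive-dimensional vertices of $Y$ and overlap only in the shared $0$-vertices, so the ordered union reconstructs $Y$ with its source/target maps and its linear order intact.

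Finally I would treat the morphism. Each $\CompThing{\gamma}{y_{i-1},y_i}$ is by definition $\gamma\circ\CompThing{\iota}{y_{i-1},y_i}$, i.e. the restriction of $\gamma$ to $\CompThing{Y}{y_{i-1},y_i}$; so the universal map out of the iterated pushout induced by the $\CompThing{\gamma}{y_{i-1},y_i}$ agrees with $\gamma$ on each piece, and since the pieces cover $Y$ and agree on the overlapping $0$-vertices, this induced map is $\gamma$ itself. As $\MathText{n}{-cells}$ are isomorphism classes of such globular morphisms into $X$, this yields the asserted equality $\gamma=\DeComp{\gamma}{y}$. The main obstacle is the structural claim in the second stage: one must use that the order $\GCrel$ on a globular cardinal is linear to see that every positive-dimensional vertex has consecutive $0$-source and $0$-target, so that the $\CompThing{Y}{y_{i-1},y_i}$ genuinely partition the higher cells and glue back to $Y$ — this is exactly where linearity (rather than mere acyclicity) is indispensable, and where one must confirm that the reconstructed structure and order coincide with those of $Y$.
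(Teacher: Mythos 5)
Your proposal is correct, and it in fact supplies a proof where the paper gives none: Observation \ref{obs: decomp} is stated there without argument, evidently taken as clear from the pasting-diagram picture of globular cardinals. Your three stages are exactly what a complete verification requires --- $0$-composability of adjacent pieces, identification of the iterated pushout with $Y$, and identification of the induced map with $\gamma$ --- and the step you flag as the main obstacle is indeed the only nontrivial point, and it does hold. To confirm it: write $\sigma(z)$ and $\tau(z)$ for the iterated $0$-source and $0$-target of a cell $z$; along each generating instance of $\prec$ the pair $(\sigma,\tau)$ is weakly increasing for $\GCrel$ (it is unchanged except at dimension~$0/1$ transitions, where one endpoint moves up), so $u\GCrel v$ implies $\sigma(u)\GCrel\sigma(v)$ and $\tau(u)\GCrel\tau(v)$. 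If a positive-dimensional $z$ had $\sigma(z)=y_a$, $\tau(z)=y_b$ with some $y_c$ strictly between them, linearity would force $z\GCrel y_c$ or $y_c\GCrel z$; the first gives $y_b\GCrel y_c$ and the second gives $y_c\GCrel y_a$, either way contradicting the position of $y_c$. Antisymmetry likewise rules out $\sigma(z)=\tau(z)$, since $y_a\GCrel z\GCrel y_a$ would give $z=y_a$. Hence every positive-dimensional cell lies in exactly one $\CompThing{Y}{\Pred{y},y}$, these pieces partition the higher cells of $Y$ compatibly with the linear order, and the paper's pushout formula (for $m=0$: ordered union in dimensions $\ell\ge 1$, gluing along the shared $0$-vertex in dimension $0$) reassembles $Y$ and recovers $\gamma$ as the induced map, exactly as you argue. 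The only other point of care is the one your last sentence already handles: cells of $\FreeFunctor{X}$ are isomorphism classes in $\GlobCard/X$, so the asserted identity is an equality of classes, and producing an isomorphism over $X$ between $Y$ and the iterated pushout suffices.
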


	\begin{observation} \label{obs: dom sub} 

We show 
$	
		\SubThing
			{(\GCdom{m}\gamma)}
			{\Pred{y},y}
=	\GCdom{m-1}
		\SubThing{\gamma}{\Pred{y},y}
$
for an \MathText{n}{-cell} ~ $\gamma$ of ~
$	\FreeFunctor{X}$ where ~
$	m<n$ and ~ $y\in\ExceptFirst{Y_0}$. %
The lifting in 
\[\bfig
\hSquares|aaaaaab|/-->`-->`>`>`>`>`>/%
[\SubThing{(\GCsrc{m} Y)}{\Pred{y},y}
`\SubThing{Y}{\Pred{y},y}
`\SubThing{X}{\Pred{\gamma y},\gamma y}
`\GCsrc{m} {Y}\circ u_1
`Y\circ u_1
`X\circ u_1
;
`\SubThing{\gamma}{\Pred{y},y}
`
`
`
`
`\gamma\cdot u_1
]
\efig\]
along the vertical arrows is ~ 
$	\SubThing{(\GCdom{m}\gamma)}{\Pred{y},y}
$
where the inclusions are described in Definition 
\ref{def: restriction GC}. %
The composite 
\[
\GCsrc{m-1} \SubThing{Y}{\Pred{y},y}
\to/->/<500>^{\mathrm{incl}}
\SubThing{Y}{\Pred{y},y}
\to/->/<500>^{\SubThing{\gamma}{\Pred{y},y}}
\SubThing{X}{\Pred{y},y}
\] 
is ~ $\GCdom{m-1}
		\SubThing{\gamma}{\Pred{y},y}
$. %
It remains to show that 
$	\SubThing	{\left( \GCsrc{m} {Y}\right)}
				{\Pred{y},y}
$ and ~ 
$	\GCsrc{m-1} \left(
				\SubThing{Y}{\Pred{y},y}
				\right)
$ are identical. %
Let ~ $x$ be an element of ~ 
$	\SubThing	{\left( \GCsrc{m} {Y}\right)}
				{\Pred{y},y}
$. %
Then ~ $\Pred{y} \GCrel x \GCrel y$ and either 
\[
	x\in Y_\ell 
	\text{ for } 1<\ell<m
\]
or 
\[
	x \text{ is the least element of }
	\CompThing{Y}{\Pred{z},z}_{m}
	\text{ for some }
	z\in \ExceptFirst{Y_{m-1}}. %
\] 
Given the first condition then they can
be rewritten as 
\[
	x\in\SubThing{Y}{\Pred{y},y}_\ell 
	\text{ for } \ell<m-1
\]
or 
\[
	x \text{ is the least element of }
	\CompThing{\SubThing{Y}{\Pred{y},y}}
	{\Pred{z},z}_{\ell}
	\text{ for some }
	z\in\ExceptFirst{
		\SubThing{Y}{\Pred{y},y}_{m-2}
		}. %
\] 
Jointly these rewritten conditions state that ~$x$ 
is in $
	\GCsrc{m-1} \left(
				\SubThing{Y}{\Pred{y},y}
				\right)
$. %
The converse is also true. %
We have that ~$	
		\SubThing
			{(\GCdom{m}\gamma)}
			{\Pred{y},y}
$ and ~$
	\GCdom{m-1}
		\SubThing{\gamma}{\Pred{y},y}
$ are identical. 
\end{observation}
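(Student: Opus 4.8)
The plan is to establish the identity by exhibiting both sides as morphisms of globular cardinals that share the same underlying map $\SubThing{\gamma}{\Pred{y},y}$, and then reducing the whole statement to a single combinatorial identity between two subfunctors of $\SubThing{Y}{\Pred{y},y}$. The point is that neither $m$-domain carries data beyond an inclusion followed by (a restriction of) $\gamma$, so once the inclusions are matched the equality follows formally.

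First I would unwind the left-hand side. By definition $\GCdom{m}\gamma$ is the composite of the inclusion $\GCsrc{m}Y\hookrightarrow Y$ with $\gamma$. Since $m\ge 1$ we have $(\GCsrc{m}Y)_0=Y_0$, so $\Pred{y},y$ are consecutive $0$-vertices of $\GCsrc{m}Y$ and the restriction $\SubThing{(\GCsrc{m}Y)}{\Pred{y},y}$ is defined. Restriction is defined as a lifting along inclusions (Definition \ref{def: restriction GC}) and therefore respects composition, so restricting this composite yields the restricted inclusion followed by $\SubThing{\gamma}{\Pred{y},y}$. On the right-hand side, $\GCdom{m-1}\SubThing{\gamma}{\Pred{y},y}$ is by definition the inclusion $\GCsrc{m-1}\SubThing{Y}{\Pred{y},y}\hookrightarrow\SubThing{Y}{\Pred{y},y}$ followed by the same map $\SubThing{\gamma}{\Pred{y},y}$. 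Hence the two sides coincide as soon as the two inclusions agree, i.e. as soon as $\SubThing{(\GCsrc{m}Y)}{\Pred{y},y}=\GCsrc{m-1}\SubThing{Y}{\Pred{y},y}$ as subfunctors of $\SubThing{Y}{\Pred{y},y}$.

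I would prove this remaining set identity by a double-inclusion element chase against the explicit description of the $m$-source. An element $x$ of $\SubThing{(\GCsrc{m}Y)}{\Pred{y},y}$ is an element of $\GCsrc{m}Y$ with $\Pred{y}\GCrel x\GCrel y$, hence either lies in some $Y_\ell$ with $\ell<m$ or is the least element of $\CompThing{Y}{\Pred{z},z}_m$ for some $z\in\ExceptFirst{Y_{m-1}}$. Passing through the dimension shift by $u_1$ built into restriction, the first alternative becomes membership in $\SubThing{Y}{\Pred{y},y}_\ell$ for $\ell<m-1$, and the second becomes being the least element of $\CompThing{\SubThing{Y}{\Pred{y},y}}{\Pred{z},z}_{\ell}$ for a suitable $z\in\ExceptFirst{\SubThing{Y}{\Pred{y},y}_{m-2}}$; together these are precisely the generating conditions for $\GCsrc{m-1}\SubThing{Y}{\Pred{y},y}$. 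Reading the rewriting backwards gives the reverse inclusion.

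The step I expect to be the main obstacle is this last translation of conditions, where the bookkeeping of the index shift induced by restriction must be handled with care: restricting by consecutive $0$-vertices precomposes with $u_1$ and so lowers every dimension by one, and one must check that the composition cells $\CompThing{Y}{\Pred{z},z}$ together with their least elements transport compatibly under this shift and under passage to $\SubThing{Y}{\Pred{y},y}$. Matching the two generating clauses term by term after this off-by-one shift is the delicate part; by contrast the functoriality of restriction and the identification of the two outer maps with $\SubThing{\gamma}{\Pred{y},y}$ are routine.
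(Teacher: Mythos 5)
Your proposal is correct and follows essentially the same route as the paper: both reduce the identity to the equality of subfunctors $\SubThing{(\GCsrc{m}Y)}{\Pred{y},y}=\GCsrc{m-1}\SubThing{Y}{\Pred{y},y}$ (after identifying each side as an inclusion composed with $\SubThing{\gamma}{\Pred{y},y}$, the paper via the explicit lifting diagram, you via functoriality of restriction-as-lifting), and both settle that equality by the same element chase through the two generating clauses of the $m$-source with the off-by-one dimension shift. The bookkeeping step you flag as delicate is exactly the rewriting the paper carries out.
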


	\begin{observation} \label{obs: comp sub} 
	
We show 
$	\SubThing
	{(\beta\nComp{m}{}\alpha)}
	{\Pred{y},y}
=	\SubThing{\beta}{\Pred{y},y}
	\nComp{m-1}{}
	\SubThing{\alpha}{\Pred{y},y}
$
for \MathText{m}{-composable} \MathText{n}{-cells} ~ $\alpha$ and ~ $\beta$ of ~
$	\FreeFunctor{X}$ where ~
$	m<n$ and ~ $y\in\ExceptFirst{Y_0}$. %
Let ~ 
$\alpha\colon{Y}\rightarrow{X}$ 
and ~
$\beta\colon{Z}\rightarrow{X}$ 
be \MathText{m}{-composable} 
\MathText{n}{-cells}. %
Their composition ~
$	\beta\nComp{m}{}\alpha$ 
is given the unique morphism out of the pushout in
$$\bfig
\node TL(0,0)[\GCsrc{m}Z]
\node TR(600,0)[Z]
\node BL(0,-600)[Y]
\node BR(600,-600)[P]
\node X(850,-850)[X]
\arrow/->/[TL`TR;\mathrm{incl}]
\arrow/->/[TL`BL;\delta]
\arrow/->/[TR`BR;]
\arrow/->/[BL`BR;]
\arrow|b|/{@{->}@/_5pt/}/[BL`X;\alpha]
\arrow/{@{->}@/^5pt/}/[TR`X;\beta]
\arrow/-->/[BR`X;]
\efig
$$
where ~ 
$	P=Z\backslash\GCsrc{m}Z + Y
$. %
Let ~ $y$ be an element of ~ $P$. %
By Observation \ref{obs: dom sub} then ~ 
$	\SubThing{(\GCsrc{m} Z)}{\Pred{y},y}
$ and ~
$	\GCsrc{m-1} \SubThing{Z}{\Pred{y},y}
$ are identical. %
Note that ~ 
$	\SubThing{Z}{{\Pred{y},y}}
	\backslash\SubThing{(\GCsrc{m}Z)}{\Pred{y},y}
$ is
$	\SubThing{Z}{{\Pred{y},y}}
	\backslash\GCsrc{m}Z
$
and further that
$	\SubThing{Z}{\Pred{y},y}
				\backslash\GCsrc{m}Z 
				+ \SubThing{Y}{\Pred{y},y}
$ is
$	\SubThing
		{(Z\backslash\GCsrc{m}Z + Y)}
		{\Pred{y},y}
$. %
Then ~ $\SubThing{P}{\Pred{y},y}$ is the 
pushout of 
$$\bfig
\node TL(0,0)[\SubThing{\GCsrc{m-1}Z}{\Pred{y},y}]
\node TR(600,0)[\SubThing{Z}{\Pred{y},y}]
\node BL(0,-600)[\SubThing{Y}{\Pred{y},y}]
\node BR(600,-600)[\SubThing{P}{\Pred{y},y}]
\node X(850,-850)[\SubThing{X}{\Pred{y},y}]
\arrow/->/[TL`TR;]
\arrow/->/[TL`BL;
					]
\arrow/->/[TR`BR;]
\arrow/->/[BL`BR;]
\arrow|b|/{@{->}@/_5pt/}/[BL`X;\SubThing{\alpha}{\Pred{y},y}]
\arrow/{@{->}@/^5pt/}/[TR`X;\SubThing{\beta}{\Pred{y},y}]
\arrow/-->/[BR`X;]
\efig
$$
which is the lifting of the above diagram 
determined by the element ~ $y\in P$. %
As all horizontal and vertical maps above are 
monomorphisms we avoid unnecessary notation
by labeling all restrictions with ~ 
$(\Pred{y},y)$. %
Hence the unique map of the second pushout is ~ 
$	\SubThing{(\beta\nComp{m}{}\alpha)}{\Pred{y},y}
$ from the lifting and 
$	\SubThing{\beta}{\Pred{y},y}
	\nComp{m}{}
	\SubThing{\alpha}{\Pred{y},y}
$ by definition. 
\end{observation}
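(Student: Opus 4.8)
The plan is to apply the restriction operation $\SubThing{(\Slot)}{\Pred{y},y}$ directly to the pushout square that defines $\beta\nComp{m}{}\alpha$ and to recognise the resulting square as the pushout defining $\SubThing{\beta}{\Pred{y},y}\nComp{m-1}{}\SubThing{\alpha}{\Pred{y},y}$. Recall that $\beta\nComp{m}{}\alpha$ is the map induced on the pushout $P=Z\backslash\GCsrc{m}Z+Y$ of the span $Y\leftarrow\GCsrc{m}Z\rightarrow Z$ over $X$, where $\GCsrc{m}Z$ is glued to $\GCtgt{m}Y$ through the comparison isomorphism $\delta$. First I would fix a vertex $y\in\ExceptFirst{P_0}$ and restrict each of the four corners, together with the apex $X$, to the subfunctor determined by the consecutive pair $(\Pred{y},y)$.

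The first input is the source version of Observation~\ref{obs: dom sub}, applied to $Z$, which gives $\SubThing{(\GCsrc{m}Z)}{\Pred{y},y}=\GCsrc{m-1}\SubThing{Z}{\Pred{y},y}$; this identifies the restriction of the shared object $\GCsrc{m}Z$ with the shared object of the span defining the $(m-1)$-composite of the restricted cells. The crux of the argument is then to show that restriction commutes with the explicit set-level formation of $P$, that is,
\[
	\SubThing{(Z\backslash\GCsrc{m}Z+Y)}{\Pred{y},y}
	=
	\SubThing{Z}{\Pred{y},y}\backslash\GCsrc{m-1}\SubThing{Z}{\Pred{y},y}
	+\SubThing{Y}{\Pred{y},y}.
\]
I would verify this dimensionwise, using that $\SubThing{(\Slot)}{\Pred{y},y}$ is built from the dimension-shifting functor $u_{n+1}$ and that both the removal $\backslash$ and the ordered-union $+$ are computed fibrewise on the underlying finite linearly ordered sets; the index shift $m\mapsto m-1$ on the right is precisely the shift produced by $u_{n+1}$. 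Because all the structure maps in sight are monomorphisms, restriction preserves the pushout, so the restricted square is again a pushout, now with apex $\SubThing{P}{\Pred{y},y}$.

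It then remains to check that the two universally induced maps into the corresponding restriction of $X$ agree. This is automatic: the restricted legs $\SubThing{\alpha}{\Pred{y},y}$ and $\SubThing{\beta}{\Pred{y},y}$ are, by the very definition of restriction as a lifting along jointly monic inclusions, the cocone that induces $\SubThing{(\beta\nComp{m}{}\alpha)}{\Pred{y},y}$, so uniqueness of the map out of the restricted pushout forces it to equal $\SubThing{\beta}{\Pred{y},y}\nComp{m-1}{}\SubThing{\alpha}{\Pred{y},y}$.

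I expect the main obstacle to be the fibrewise verification of the displayed identity, in particular the boundary cases $\ell<m$, $\ell=m$ and $\ell>m$ in the decomposition of $P_\ell$, and confirming that deleting $\GCsrc{m}Z$ restricts to deleting $\GCsrc{m-1}\SubThing{Z}{\Pred{y},y}$. Keeping accurate track of which vertices survive under $u_{n+1}$ and lie strictly between $\Pred{y}$ and $y$ in the order $\GCrel$ is where the bookkeeping is delicate, although no genuine coherence difficulty arises since everything reduces to an elementary computation on finite linearly ordered sets.
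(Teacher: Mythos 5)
Your proposal matches the paper's own argument essentially step for step: both restrict the defining pushout square at the consecutive pair $(\Pred{y},y)$, invoke Observation \ref{obs: dom sub} to identify $\SubThing{(\GCsrc{m}Z)}{\Pred{y},y}$ with $\GCsrc{m-1}\SubThing{Z}{\Pred{y},y}$, verify the set-level identity $\SubThing{(Z\backslash\GCsrc{m}Z+Y)}{\Pred{y},y}=\SubThing{Z}{\Pred{y},y}\backslash\GCsrc{m}Z+\SubThing{Y}{\Pred{y},y}$ so that the restricted square is again a pushout (all maps in sight being monomorphisms), and conclude by uniqueness of the induced map out of that pushout. The only differences are cosmetic: you make explicit the dimensionwise bookkeeping (the cases $\ell<m$, $\ell=m$, $\ell>m$) that the paper leaves implicit, and you correctly write the composite of the restrictions as an $(m-1)$-composition where the paper's final line has a typographical slip writing $\nComp{m}{}$.
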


	\begin{definition} \label{def: ordinal free}
	
We define a functor 
\begin{align*}
	\mathfrak{F}
	\colon\Graph_\mathbb{N}
	\rightarrow\wCat
\end{align*}
inductively on the dimension of ordinal graphs. %
Let ~$\mathcal{G}$ be the enriched 
graph of dimension \TextMinusOne\ (minus one) which is the empty graph. %
Define ~$\FreeFunctor\mathcal{G}$ as the empty \TextCategory{\omega}. %
Assume ~$\FreeFunctor$ is defined on enriched graphs of dimension ~$n$ and let ~$\mathcal{G}$ have dimension ~$n+1$.  
Define the object set of ~$\mathfrak{F}\mathcal{G}$ as the object set of ~$\mathcal{G}$.
Its hom-sets are defined by induction for distinct objects ~$x$ and ~$y$ as
\[	\left(\mathfrak{F}\mathcal{G}\right)\left(x,y\right) = 
	\sum_{x_0, ..., x_n \in \Obj \mathcal{G}}
	\mathfrak{F}\left(
				\mathcal{G}
				\left(x_{n-1},x_n
				\right)\right)
	\times ... \times
	\mathfrak{F}\left(\mathcal{G}\left(x_{0},x_1\right)\right)
\]
where ~$x=x_0$ and ~$y=x_n$. 
For all ~$x\in\Obj\mathcal{G}$ then ~$(\mathfrak{F}\mathcal{G})(x,x)$ is defined as 
\[	\mathcal{C}_T 
	+ \sum_{x_0, ..., x_n \in \Obj\mathcal{G}}
	\mathfrak{F}(\mathcal{G}(x_{n-1},x_n))
	\times ... \times
	\mathfrak{F}(\mathcal{G}(x_{0},x_1))
\]
where ~$\mathcal{C}_T$ is the terminal \TextCategory{\omega}. %

Define ~$\FreeFunctor$ on morphisms as follows. %
Let ~
$	g\colon\mathcal{G}\rightarrow\mathcal{H}
$ be a morphism of enriched graphs with domain of dimension \TextMinusOne\ (minus one). %
Define ~$\FreeFunctor{g}$ as the unique \TextFunctor{\omega}  ~
$	\emptyset\rightarrow\FreeFunctor\mathcal{H}
$. %
Assume ~$\FreeFunctor$ is defined for morphisms 
with domain of dimension ~$n$ and let ~
$	g\colon\mathcal{G}\rightarrow\mathcal{H}
$ be a morphism of enriched graphs with domain of dimension ~$n+1$. %
Let ~$x_0, x_1,\ldots,x_p$ denote the objects of ~$\mathcal{G}$. %
The morphisms of hom-objects are defined for distinct objects ~$x$ and ~$y$ as
\[	\left(\mathfrak{F}{g}\right)
		_{x,y} = 
	\sum_{x_0, ..., x_n \in \Obj\mathcal{G}}
	\mathfrak{F}\left(g\left(x_{n-1},x_n\right)\right)
	\times ... \times
	\mathfrak{F}\left(g\left(x_{0},x_1\right)\right)
\]
where ~$x=x_0$ and ~$y=x_n$. 
For all ~$x\in\Obj\mathcal{G}$ then ~
$	(\mathfrak{F}{g})_{x,x} 
$ is defined as 
\[	{g}_T^{\phantom{X}} 
	+ \sum_{x_0, ..., x_n \in \Obj\mathcal{G}}
	\mathfrak{F}(g(x_{n-1},x_n))
	\times ... \times
	\mathfrak{F}(g(x_{0},x_1))
\]
where ~
$	g^{\phantom{X}}_T
	\colon\mathcal{C}^{\phantom{X}}_T
	\rightarrow\mathcal{C}^{\phantom{X}}_T
$. %

\end{definition}

	\begin{definition}

We define a forgetful functor 
\[	\ForgetfulFunctor
	\colon\wCat
	\rightarrow\Graph_\mathbb{N}
\] 
using induction on the dimension of \TextCategories{\omega}. %
Let ~ $\mathcal{C}$ be the \TextCategory{\omega} of dimension \TextMinusOne\ (minus one) which is the empty \TextCategory{\omega}. %
Define ~ $\ForgetfulFunctor\mathcal{C}$ as the empty ordinal graph. %
Assume ~ $\ForgetfulFunctor$ is defined on \TextCategories{\omega} of dimension ~ $n$ and let ~ $\mathcal{C}$ have dimension ~ $n+1$. %
Define ~ $\ForgetfulFunctor\mathcal{C}$ as the ordinal graph with object set ~ $\Obj\mathcal{C}$ and with edge-object ~ 
$	(\ForgetfulFunctor\mathcal{C})(x,y)
=	\ForgetfulFunctor(\mathcal{C}(x,y))
$ determined by induction for each pair of objects ~ $x$ and ~$y$. %

Define ~ $\ForgetfulFunctor$ on morphisms as follows. %
Let ~ 
$	\mathcal{F}\colon\mathcal{C}\rightarrow\mathcal{A}
$ be an \TextFunctor{\omega} with domain of dimension \TextMinusOne\ (minus one). %
Define ~ $\ForgetfulFunctor\mathcal{F}$ as the unique morphism ~
$	\emptyset\rightarrow
	\ForgetfulFunctor\mathcal{A}
$ of enriched graphs. %
Assume ~ $\ForgetfulFunctor$ is defined on \TextFunctors{\omega} with domain of dimension ~ $n$ and let ~ $\mathcal{F}$ have domain of dimension ~ $n+1$. %
Define ~ $\ForgetfulFunctor\mathcal{F}$ as the ordinal graph with object set morphism identical with that of ~ $\mathcal{F}$ and with edge-object morphism ~ 
$	(\ForgetfulFunctor\mathcal{F})(x,y)
=	\ForgetfulFunctor(\mathcal{F}(x,y))
$ determined by induction for each pair of objects ~ $x$ and ~$y$. %
\end{definition}

	\begin{theorem}
	
The functor ~ $\FreeFunctor$ is left adjoint to ~ 
$\ForgetfulFunctor$. 
\end{theorem}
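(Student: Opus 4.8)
The plan is to prove the adjunction one dimension at a time and then pass to the colimit. Since $\Graph_\mathbb{N}$ and $\wCat$ are the colimits of the chains $\Graph_0\to\Graph_1\to\cdots$ and $\nCat{0}\to\nCat{1}\to\cdots$ of full inclusions, and since both $\FreeFunctor$ and $\ForgetfulFunctor$ restrict to functors $\FreeFunctor_n\colon\Graph_n\to\nCat{n}$ and $\ForgetfulFunctor_n\colon\nCat{n}\to\Graph_n$ compatible with these inclusions, it suffices to exhibit a natural isomorphism
\[
	\nCat{n}(\FreeFunctor_n\mathcal{G},\mathcal{C})
	\cong
	\Graph_n(\mathcal{G},\ForgetfulFunctor_n\mathcal{C})
\]
for every $n$, and to check that these agree under the inclusions so that they assemble into the desired adjunction on the colimits. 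I would prove the level-$n$ isomorphism by induction on $n$. The base case ($n=-1$, or equivalently $n=0$) is immediate: at the bottom both categories reduce to $\CatName{Set}$ and both functors are the identity, so the bijection is trivial.

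For the inductive step, assume $\FreeFunctor_n\dashv\ForgetfulFunctor_n$ and write $\mathcal{V}=\nCat{n}$, so that $\nCat{(n+1)}$ is the category of $\mathcal{V}$-enriched categories and $\Graph_{n+1}=\Graph_n\text{-Gph}$. The essential observation is that the level-$(n+1)$ functors factor through a change of base followed by a free/forgetful pair over the fixed base $\mathcal{V}$. Indeed, by Definition \ref{def: ordinal free} and the definition of $\ForgetfulFunctor$, one checks directly that $\FreeFunctor_{n+1}$ is the composite of the change-of-base functor $\Graph_n\text{-Gph}\to\mathcal{V}\text{-Gph}$ that applies $\FreeFunctor_n$ to each edge-object, followed by the free $\mathcal{V}$-category functor $\mathcal{V}\text{-Gph}\to\nCat{(n+1)}$ given by the path formula; symmetrically $\ForgetfulFunctor_{n+1}$ is the forgetful functor $\nCat{(n+1)}\to\mathcal{V}\text{-Gph}$ followed by the change-of-base functor $\mathcal{V}\text{-Gph}\to\Graph_n\text{-Gph}$ that applies $\ForgetfulFunctor_n$ to each edge-object.

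Adjunctions compose, so it remains to check that each factor is one half of an adjunction. The free $\mathcal{V}$-category functor is left adjoint to the forgetful functor by the universal property of free enriched categories: a $\mathcal{V}$-functor out of the free $\mathcal{V}$-category on a $\mathcal{V}$-graph is determined by an object map together with, for each edge, a single morphism in $\mathcal{V}$ into the corresponding hom-object, since composition forces the value on a path to be the iterated composite and the identity-adjoining summand $\mathcal{C}_T$ forces the value on the empty path; this uses that $\mathcal{V}=\nCat{n}$ carries the finite products and coproducts appearing in the path formula. For the change-of-base factors, the inductive hypothesis gives, edge-object by edge-object, a bijection between morphisms $\FreeFunctor_n(\mathcal{G}(x,y))\to\mathcal{D}(\phi x,\phi y)$ in $\mathcal{V}$ and morphisms $\mathcal{G}(x,y)\to\ForgetfulFunctor_n(\mathcal{D}(\phi x,\phi y))$ in $\Graph_n$, natural in both arguments; since a morphism of enriched graphs is exactly an object map $\phi$ together with such edgewise morphisms, this is precisely the statement that the two change-of-base functors are adjoint. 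Composing the two adjunctions yields $\FreeFunctor_{n+1}\dashv\ForgetfulFunctor_{n+1}$, and unwinding the composite reproduces the hom-bijection displayed above.

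The main obstacle is bookkeeping rather than conceptual: one must verify that the explicit inductive definitions really do factor as (change of base) $\circ$ (free), in particular that the per-path products and the extra terminal summand $\mathcal{C}_T$ in the self-homs correspond precisely to the free enriched-category construction and to the adjoined identities. Care is also needed to confirm that the level-$n$ hom-bijections are compatible with the inclusions $\Graph_n\hookrightarrow\Graph_{n+1}$ and $\nCat{n}\hookrightarrow\nCat{(n+1)}$, so that a morphism between finite-dimensional objects, which lives at some finite level, is transported consistently, and hence that the family of level-$n$ adjunctions genuinely assembles into a single adjunction $\FreeFunctor\dashv\ForgetfulFunctor$ on the colimit categories.
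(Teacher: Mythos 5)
Your proof is correct, but it takes a genuinely different route from the paper's. The paper proves the adjunction by a single induction on the dimension of the domain: it directly defines the bijection $\phi\colon\Graph_\mathbb{N}(\mathcal{G},\ForgetfulFunctor\mathcal{C})\rightarrow\wCat(\FreeFunctor\mathcal{G},\mathcal{C})$ by keeping the object map and producing the hom-data edge-object by edge-object from the inductive hypothesis, then checks injectivity and surjectivity, again inductively; it never invokes the filtration by dimension as a colimit, nor any intermediate category of enriched graphs. Your argument instead isolates two reusable lemmas --- the change-of-base adjunction (if $\FreeFunctor_n\dashv\ForgetfulFunctor_n$, then applying these edge-wise gives an adjunction between $\Graph_n\text{-Gph}$ and $\nCat{n}\text{-Gph}$) and the free/forgetful adjunction for categories enriched over the fixed base $\nCat{n}$ --- and observes that Definition \ref{def: ordinal free} literally factors $\FreeFunctor_{n+1}$ as the composite of these two left adjoints. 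This modularity buys you exactly the two points the paper elides: first, the paper's $(\FreeFunctor\mathcal{G})(x,y)$ is a sum over paths of products, yet its proof specifies $\mathcal{F}(x,y)$ only on the generating edges $\FreeFunctor(\mathcal{G}(x,y))$, and the extension to arbitrary paths via composition in $\mathcal{C}$, and to the identity summand $\mathcal{C}_T$, is precisely the universal property of the free enriched category that you state as a separate lemma; second, the paper never verifies naturality of $\phi$ in $\mathcal{G}$ and $\mathcal{C}$, while you get it automatically because composites of adjunctions are adjunctions. The cost is the bookkeeping you yourself flag: checking the factorization against the definitions, the distributivity of finite products over coproducts in $\nCat{n}$ that makes the path formula a genuine free enriched category, and the compatibility of the level-wise bijections with the inclusions so that they assemble over the colimits $\Graph_\mathbb{N}$ and $\wCat$ --- a step the paper's single induction on dimension avoids entirely because it quantifies over morphisms one at a time rather than over levels.
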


	\begin{proof}

We define, given an ordinal graph ~ $\mathcal{G}$ and an \TextCategory{\omega} ~ $\mathcal{C}$, a bijection 
\[	\phi
	\colon\Graph_\mathbb{N}
		(\mathcal{G},\ForgetfulFunctor\mathcal{C})
	\rightarrow\wCat(\FreeFunctor\mathcal{G},\mathcal{C})
\]
using induction on the dimension of ordinal graphs. %
Let ~ 
$	g	\colon\mathcal{G}
		\rightarrow\ForgetfulFunctor\mathcal{C}
$ be a morphism of ordinal graphs with domain of dimension \TextMinusOne\ (minus one). %
Define ~ $\phi g$ as the unique \TextFunctor{\omega} ~ 
$	\FreeFunctor\mathcal{G}\rightarrow\mathcal{C}
$. %
Assume ~ $\phi$ is defined and is a bijection on morphisms of ordinal graphs with domain of dimension  ~ $n$ and let ~ $g$ be such a morphism with domain of dimension  ~ $n+1$. %
Define ~ 
$	\phi g 
= 	\mathcal{F}
	\colon\FreeFunctor\mathcal{G}
	\rightarrow\mathcal{C}
$ as follows. %
The object morphism of ~ $\mathcal{F}$ is that of ~ $g$. %
Define, using the induction assumption, the morphism ~ $\mathcal{F}(x,y)$ of hom-sets as  ~ $\phi g(x,y)$ for each pair of objects ~ $x,y$ of ~ $\mathcal{G}$. %

Suppose that ~ $g$ and ~ $g'$ are morphisms of ordinal graphs such that ~ $\phi g = \phi g'$. %
Then the object maps of ~$g$ and ~$g'$ are identical and by the induction assumption the morphisms of edge-objects are identical. %
Hence ~ $\phi$ is injective. %

Let ~
$	\mathcal{F}
	\colon\FreeFunctor\mathcal{G}
	\rightarrow\mathcal{C}
$ be an \TextFunctor{\omega}. %
Define a morphism ~ $g$ of ordinal graphs as follows. %
The object map of ~ $g$ is that of ~$F$. The morphisms of edge-objects are defined using the induction assumption. %
Hence  ~$\phi$ is surjective and ~ $\FreeFunctor$ is left adjoint to ~ $\ForgetfulFunctor$. %
\end{proof}

	\begin{observation}

	\label{obs: restrict free}
	
In the sequel we restrict ~$\mathfrak{F}$ of 
Definition \ref{def: ordinal free} to the 
category of ordinal graphs. %
Recall that an ordinal graph ~$\mathcal{G}$ has ~$\mathcal{G}(x,y)$ non-empty if and only if ~$y$ is the successor of ~$x$. %
Let ~$\mathcal{G}$ be an ordinal graph. %
For all objects ~$x$ of ~$\mathcal{G}$ we have the hom-object ~
$	\left(\mathfrak{F}\mathcal{G}\right)
		\left(x,x\right) = 
	\mathcal{C}_T
$. %
For every pair ~$x,y$ of distinct objects then
\[	\left(\mathfrak{F}\mathcal{G}\right)
		_{x,y} = 
	\mathfrak{F}\left(\mathcal{G}
			\left(x_{n-1},x_n\right)\right)
	\times ... \times
	\mathfrak{F}\left(\mathcal{G}
			\left(x_{0},x_1\right)\right).
\]
Given a morphism
~$	g\colon\mathcal{G}\rightarrow\mathcal{H}
$
we have 
\[	\left(\mathfrak{F}g\right)
		_{x,y} = 
	\mathfrak{F}
		\left(g\left(x_{n-1},x_n\right)\right)
	\times ... \times
	\mathfrak{F}
		\left(g\left(x_{0},x_1\right)\right)
\]
where ~$x=x_0$, ~$y=x_n$ and ~$x_{i}$ is the successor of ~$x_{i-1}$ for ~$i=1,..,n$. %
\end{observation}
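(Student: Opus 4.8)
The plan is to read off both displayed formulas directly from Definition~\ref{def: ordinal free} by showing that, when $\mathcal{G}$ is an ordinal graph, the coproduct defining $\FreeFunctor$ on a hom-object has a single non-vanishing summand. First I would recall the general description: for an enriched graph of dimension $n+1$ and distinct objects $x,y$,
\[
	(\FreeFunctor\mathcal{G})(x,y)
	= \sum_{x_0,\ldots,x_n\in\Obj\mathcal{G}}
		\FreeFunctor(\mathcal{G}(x_{n-1},x_n))
		\times\cdots\times
		\FreeFunctor(\mathcal{G}(x_0,x_1)),
\]
the coproduct taken over all chains with $x_0=x$ and $x_n=y$, and the same expression augmented by a summand $\mathcal{C}_T$ in the case $x=y$.

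The key point is that for an ordinal graph $\mathcal{G}$ the edge-object $\mathcal{G}(u,v)$ is the initial object $0$ unless $v$ is the successor of $u$ (Definition~\ref{def: ordinal graphs}). Since $\FreeFunctor$ is a left adjoint by the theorem just established, it preserves the initial object, so $\FreeFunctor(0)$ is the empty $\TextCategory{\omega}$. A finite product in $\wCat$ one of whose factors has empty object set is itself empty, and a coproduct is unchanged by empty summands; therefore every chain $x_0,\ldots,x_n$ in which some $x_i$ fails to be the successor of $x_{i-1}$ contributes the empty $\omega$-category and can be dropped.

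It then remains to enumerate the surviving chains. Because $\Obj\mathcal{G}$ is finite and linearly ordered, a chain $x=x_0,x_1,\ldots,x_n=y$ with each $x_i$ the successor of $x_{i-1}$ exists and is unique precisely when $x\le y$, and has positive length exactly when $x<y$. For distinct $x<y$ this unique chain yields the stated product, while for $x=y$ no positive-length chain survives and the hom-object collapses to $\mathcal{C}_T$. Running the identical bookkeeping through the morphism clause of Definition~\ref{def: ordinal free} produces the displayed formula for $(\FreeFunctor g)_{x,y}$.

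I expect the only substantive thing to check is the interaction of the finite products and coproducts of $\wCat$ with the initial object: that a product with the empty $\omega$-category is empty and that coproducts absorb empty summands. Granting these, and the preservation of the initial object by the left adjoint $\FreeFunctor$, the collapse of the sum is a purely combinatorial consequence of the linearity of $\Obj\mathcal{G}$ together with the successor condition that characterises the non-initial edge-objects of an ordinal graph.
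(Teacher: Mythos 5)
Your argument is correct and is exactly the justification the paper leaves implicit: the Observation is stated without proof, the intended reasoning being precisely that initial edge-objects yield the empty $\omega$-category (in the paper this is immediate from the base case of Definition~\ref{def: ordinal free}, so the left-adjoint argument is not even needed), that empty factors annihilate products while empty summands are absorbed by coproducts, and that linearity of $\operatorname{Ob}\mathcal{G}$ leaves a unique surviving successor chain. Your bookkeeping, including the collapse to $\mathcal{C}_T$ when $x=y$ and the morphism clause, matches the paper's intent, so there is nothing to add.
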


	\begin{observation}
	
	\label{obs: enriched one type}
	
We describe here the ~\MathText{n}{-cells}, domain, codomain and composition operations of ~$\FreeFunctor\mathcal{G}$ for an ordinal graph ~$\mathcal{G}$ with objects ~$x_0,x_1,\ldots,x_p$. %
A \MathText{0}{-cell} of ~
$	\FreeFunctor\mathcal{G}
$ is an object of ~$\mathcal{G}$. %
An \MathText{n}{-cell} ~$y$ of 
~$	\FreeFunctor\mathcal{G}
$ is a sequence 
~$	(y_i)_{i=h+1}^{k}
$ 
of ~$(n-1)\text{-cells}$, one from each factor, of the product
~$	\prod_{j=h+1}^{k}
		\FreeFunctor
			\left(\mathcal{G}(x_{i-1},x_i) 
			\right)
$ 
for ~$h\le k$ in ~$\{0,1,\ldots,p\}$. %
The ~\MathText{0}{-domain} of ~$y$ is ~$x_h$ and the ~\MathText{0}{-codomain} of ~$y$ is ~$x_k$. %
Composition (\MathText{0}{-composition}) is denoted by ~$\circ$ and is given by concatenation of sequences. %
Hence every \MathText{n}{-cell} is identified 
with a unique  \MathText{0}{-composition}. %

The \MathText{m}{-domain} 
and \MathText{m}{-codomain} 
of an \MathText{n}{-cell} ~$y$ denoted ~
$\GCdom{m}y$ and ~$\GCcod{m}y$ are the 
\MathText{0}{-compositions}
~$	\circ_{i=h+1}^{k} \GCdom{m-1} y_i
$ and 
~$	\circ_{i=h+1}^{k} \GCcod{m-1} y_i
$ (respectively) for ~$m<n$. %
The \MathText{m}{-composition} of \MathText{n}{-cells} ~$y$ and ~$z$ is defined
\[	\left(y_i\right)_{i=h+1}^{k}
	\nComp{m}{}
	\left(z_i\right)_{i=h+1}^{k}
\quad=\quad
	\circ_{i=h+1}^{k}
	\left(
		y_i \nComp{m-1}{} z_i
	\right)
\]
for ~$m<n$ where the \MathText{(m-1)}{-composition} is in ~
$	\FreeFunctor\mathcal{G}(x_{i-1},x_i)
$. %
\end{observation}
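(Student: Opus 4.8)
The plan is to verify each clause by unpacking Definition~\ref{def: ordinal free} in the streamlined form recorded in Observation~\ref{obs: restrict free}, running an induction on the dimension $n$ of the ordinal graph $\mathcal{G}$. The structural fact that organizes everything is that enrichment shifts cell dimension by one: for $n\ge 1$, an \MathText{n}{-cell} of $\FreeFunctor\mathcal{G}$ is exactly an \MathText{(n-1)}{-cell} of one of its hom-objects, and the operation $\GCdom{m}$ (respectively $\GCcod{m}$, $\nComp{m}{}$) on $\FreeFunctor\mathcal{G}$ becomes, inside a hom-object, $\GCdom{m-1}$ (respectively $\GCcod{m-1}$, $\nComp{m-1}{}$). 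Since the object set of $\FreeFunctor\mathcal{G}$ is $\Obj\mathcal{G}$ by construction, the identification of \MathText{0}{-cells} with objects of $\mathcal{G}$ is immediate and starts the induction.

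First I would pin down the \MathText{n}{-cells}. By Observation~\ref{obs: restrict free}, for distinct objects $x_h$ and $x_k$ the hom-object $(\FreeFunctor\mathcal{G})(x_h,x_k)$ is the product $\prod_{i=h+1}^{k}\FreeFunctor(\mathcal{G}(x_{i-1},x_i))$ along the unique increasing edge-path from $x_h$ to $x_k$---unique because $\mathcal{G}$ is an ordinal graph, so $\mathcal{G}(x,y)$ is nontrivial only when $y$ succeeds $x$---while $(\FreeFunctor\mathcal{G})(x,x)=\mathcal{C}_T$. Thus an \MathText{n}{-cell} with \MathText{0}{-domain} $x_h$ and \MathText{0}{-codomain} $x_k$ is an \MathText{(n-1)}{-cell} of this product, and an \MathText{(n-1)}{-cell} of a product of $\omega\text{-categories}$ is precisely a tuple of \MathText{(n-1)}{-cells}, one per factor. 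This delivers the sequence $(y_i)_{i=h+1}^{k}$, the prescribed \MathText{0}{-source} and \MathText{0}{-target} merely recording which hom-object hosts the cell.

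Next I would treat \MathText{0}{-composition} and the uniqueness of the \MathText{0}{-decomposition}. Composition along a \MathText{0}{-cell} is the composition of the enriched structure; for $x_h\le x_k\le x_l$ the edge-path from $x_h$ to $x_l$ is the concatenation of those from $x_h$ to $x_k$ and from $x_k$ to $x_l$, so the product hom-objects factor accordingly and the composition map is the evident pairing, namely concatenation of sequences, with identities furnished by $\mathcal{C}_T$. Reading $(y_i)_{i=h+1}^{k}$ as the iterated \MathText{0}{-composite} of its single-edge constituents then writes every \MathText{n}{-cell} as a \MathText{0}{-composition}; this is unique because the index range $h{+}1,\dots,k$ is fixed by the \MathText{0}{-source} and \MathText{0}{-target} and the factors are uniquely determined, exactly as in Observation~\ref{obs: decomp}.

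Finally I would obtain the formulas for $\GCdom{m}$, $\GCcod{m}$ and $\nComp{m}{}$ with $m\ge 1$, which is where the real work sits. For $1\le m<n$ the dimension shift turns $\GCdom{m} y$ into $\GCdom{m-1}$ of the \MathText{(n-1)}{-cell} $y=(y_i)_i$ evaluated inside the product hom-object; since source, target and composition in a product of $\omega\text{-categories}$ are computed coordinatewise, this returns the tuple $(\GCdom{m-1} y_i)_i$ (and likewise $(\GCcod{m-1} y_i)_i$ and $(y_i\nComp{m-1}{}z_i)_i$). Invoking the identification of tuples with \MathText{0}{-compositions} established above at dimension $n-1$ rewrites these as $\circ_{i=h+1}^{k}\GCdom{m-1} y_i$, and similarly for the codomain and composition. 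The hard part is keeping the two bookkeeping shifts in step---the enrichment shift lowering cell dimension and the decrement of the subscript when passing into a hom-object---and confirming that the coordinatewise answer in the product is on the nose the asserted \MathText{0}{-composition}, not merely isomorphic to it.
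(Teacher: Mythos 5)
Your verification is correct and takes the same route the paper intends: the paper states this result as an observation with no separate proof precisely because it follows by unpacking Definition~\ref{def: ordinal free} through Observation~\ref{obs: restrict free}, together with the facts that cells of an enriched category are cells of its hom-objects one dimension down and that domain, codomain and composition in a product of $\omega$-categories are computed coordinatewise --- which is exactly your argument. Your explicit handling of the two dimension shifts (the enrichment shift and the subscript decrement) and of the uniqueness of the $0$-decomposition just makes precise what the paper leaves implicit.
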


	\begin{definition}
	
	\label{def: two free functors}

We have two \emph{free} functors both denoted ~$\FreeFunctor$, one for globular cardinals and one for ordinal graphs and the functor $
	\FunctorGlobCardToOGraph
	\colon\GlobCard
	\rightarrow\OGraph
$ of Definition \ref{def: functor globcard to ograph}. %
Let ~ $X$ be a globular cardinal. %
We define an \TextFunctor{\omega}
\[	\FreeIsom\colon\FreeFunctor{X}
	\rightarrow\FreeFunctor
			\FunctorGlobCardToOGraph{X}
\]
using induction on the dimension of globular 
cardinals. %

Let ~$X$ be a globular cardinal of dimension \TextMinusOne\ (minus one). %
Then ~$\FreeFunctor{X}$ and ~$\FreeFunctor\FunctorGlobCardToOGraph{X}$ are both the empty \TextCategory{\omega}. %
Assume that ~$\FreeIsom$ is defined
for globular cardinals of dimension ~$n$ and let ~$X$ have dimension ~$n+1$. %
Define ~$\FreeIsom$ by induction on \MathText{m}{-cells}. %
Let 
~$	\gamma\colon{Y}\rightarrow{X}
$  be a \MathText{0}{-cell} of ~$\FreeFunctor{X}
$. %
Define ~$\FreeIsom$ on ~$\gamma$ as ~$\gamma_0(y)$
where ~$y$ is the unique element of ~$Y$ 
(and so of ~$Y_0$). %
Assume ~$L$ is defined on \MathText{m}{-cells} and let 
~$	\gamma
$ be an ~\MathText{(m+1)}{-cell} of 
~$	\FreeFunctor{X}$. %
By Observation \ref{obs: decomp} we have ~
$	\DeComp{\gamma}{y}
$ the unique \MathText{0}{-decomposition} of ~
$\gamma$. %
Using the induction assumption define ~
$\FreeIsom\gamma$ as ~ 
\[	\left(
		\FreeIsom\SubThing{\gamma}{\Pred{y},y}
	\right)_{y\in \ExceptFirst{Y_0}}
\quad\text{in}\quad
\prod_{y\in \gamma(\ExceptFirst{Y_0})}
	\FreeFunctor
	\left(
	\FunctorGlobCardToOGraph{X}
		(\gamma\Pred{y},\gamma y)
	\right).
\] %
Note that ~ $\gamma$ is an \MathText{m}{-cell}
if and only if ~ $\FreeIsom\gamma$ is. %

We show using induction that ~ $\FreeIsom$ preserves
the \MathText{\ell}{-domain} 
and \MathText{\ell}{-codomain} operations
for ~ $\ell<m$. %
The \MathText{0}{-domain} of ~ $\FreeIsom\gamma$
is ~ $\gamma y_0$ and the \MathText{0}{-codomain} 
is ~ $\gamma y_\ell$ 
where ~ $y_0$ is the first element, 
respectively ~ $y_\ell$ is the last element, 
of ~ $Y_0$. %
Hence ~ $\FreeIsom$ preserves ~ $\GCdom{0}$ 
and ~ $\GCcod{0}$. %

Assume that ~ $\FreeIsom$ preserves the
\MathText{\ell}{-domain} operation. %
The following derivation begins by replacing ~
$\gamma$ with its 
\MathText{0}{-decomposition}, uses 
a basic property of ~ \TextCategories{\omega} 
to proceed from line 1 to line 2, uses 
Observation \ref{obs: dom sub} to proceed from line 3 to line 4 and uses the induction assumption to proceed from line 4 to line 5. %
We have
\begin{align*}
\FreeIsom\GCdom{\ell+1}\gamma
	& =	\FreeIsom\GCdom{\ell+1}\DeComp{\gamma}{y}
\\	& =	\FreeIsom\DeComp{\GCdom{\ell+1}\gamma}{y} 
\\	& =	\left(
			\FreeIsom
			\SubThing
			 {\left(\GCdom{\ell+1}
			 	\CompThing{\gamma}{\Pred{y},y}
			 \right)}
			{py,y}
		\right)_y
\\	& =	\left(
			\FreeIsom
			\GCdom{\ell}\left(\SubThing
			 {\CompThing{\gamma}{\Pred{y},y}}
			{py,y}\right)
		\right)_y
\\	& =	\left(
			\GCdom{\ell}\FreeIsom
			\SubThing
			 {\CompThing{\gamma}{\Pred{y},y}}
			{py,y}
		\right)_y
\\	& =	\GCdom{\ell+1}
		\left(
			\FreeIsom
			\SubThing
			 {\gamma}
			{py,y}
		\right)_y
\\	& =	\GCdom{\ell+1}\FreeIsom
			\gamma
\end{align*}
and $\FreeIsom$ preserves the domain operations. %
Likewise for the codomain operations. %

We show by induction that ~ $\FreeIsom$ preserves
\MathText{\ell}{-composition} for ~ $\ell<m$. %
By construction ~ $\FreeIsom$
preserves \MathText{0}{-composition}. %
Assume that ~$\FreeIsom$ preserves 
\MathText{\ell}{-composition}. %
Let ~ $\alpha$ and ~ $\beta$ be ~
\MathText{(\ell+1)}{-composable} \MathText{n}{-cells}. %
Then each has 
\MathText{0}{-decomposition} ~$
	\ZeroComp{y}
	\CompThing{\alpha}{y}
$ and ~$
	\ZeroComp{y}
	\CompThing{\beta}{y}
$ (respectively). %
The following derivation begins by replacing ~
$\alpha$ and ~ $\beta$ with their 
\MathText{0}{-decompositions}, uses 
a basic property of ~ \TextCategories{\omega} 
to proceed from line 1 to line 2, uses
Observation \ref{obs: comp sub} to proceed from 
line 3 to line 4 and uses the induction assumption
to proceed from line 4 to line 5. %
We have 
\begin{align*}
\FreeIsom (\beta\nComp{\ell+1}{}\alpha)
	& =	\FreeIsom 
			(\ZeroComp{y}
			\CompThing{\beta}{\Pred{y},y}
			\nComp{\ell+1}{}
			\ZeroComp{y}
			\CompThing{\alpha}{\Pred{y},y}
			)
\\	& =	\FreeIsom 
			(\ZeroComp{y}
			(\CompThing{\beta}{\Pred{y},y}
			\nComp{\ell+1}{}
			\CompThing{\alpha}{\Pred{y},y}
			))
\\	& =	(
			\FreeIsom
				\SubThing
					{(\CompThing{\beta}{\Pred{y},y}
					\nComp{\ell+1}{}
					\CompThing{\alpha}{\Pred{y},y}
					)}
					{\Pred{y},y}
			)_y
\\	& =	(
			\FreeIsom
				(\SubThing
					{\CompThing{\beta}{\Pred{y},y}}
					{\Pred{y},y}
					\nComp{\ell}{}
				\SubThing{
					\CompThing{\alpha}{\Pred{y},y}}
					{\Pred{y},y}
			))_y
\\	& =	(
			\FreeIsom
				\SubThing{\beta}{\Pred{y},y}
					\nComp{\ell}{}
			\FreeIsom
				\SubThing{\alpha}
					{\Pred{y},y}
			)_y
\\	& =		(
			\FreeIsom
				\SubThing{\beta}{\Pred{y},y})_y
					\nComp{\ell+1}{}
			(\FreeIsom
				\SubThing{\alpha}{\Pred{y},y})_y
\\	& =		\FreeIsom\beta
			\nComp{\ell+1}{}
			\FreeIsom\alpha
\end{align*}
and ~ $L$ preserves composition. %
\end{definition}

	\begin{lemma}
	
	\label{lem: free globcard ograph}
	
The \TextFunctor{\omega} ~
$	L
	\colon\FreeFunctor{X}
	\rightarrow\FreeFunctor
		{\FunctorGlobCardToOGraph X}
$  is an isomorphism. %
\end{lemma}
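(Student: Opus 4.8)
The plan is to reduce the statement to a bijectivity count. Since Definition~\ref{def: two free functors} already establishes that $L$ is an $\omega$-functor preserving every $m$-domain, $m$-codomain and $m$-composition operation, $L$ will be an isomorphism as soon as it is shown to be bijective on $n$-cells for each $n$: the inverse bijections then automatically assemble into an $\omega$-functor, being inverse to structure-preserving maps. I would prove this bijectivity by induction on the dimension of the globular cardinal $X$. The base case $\dim X=-1$ is immediate, since both $\FreeFunctor X$ and $\FreeFunctor{\FunctorGlobCardToOGraph X}$ are then the empty $\omega$-category.

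For the inductive step let $X$ have dimension $n+1$. Every restriction $\SubThing{X}{\Pred{x},x}$ at a pair of consecutive $0$-vertices is a globular cardinal of dimension at most $n$, so by the induction hypothesis the corresponding lower map $L\colon\FreeFunctor{\SubThing{X}{\Pred{x},x}}\rightarrow\FreeFunctor{\FunctorGlobCardToOGraph\SubThing{X}{\Pred{x},x}}$ is an isomorphism, and in particular a bijection on cells of every dimension. On $0$-cells $L$ is the identification of the $0$-cells of $\FreeFunctor X$ with $X_0=\Obj\FunctorGlobCardToOGraph X$ (Definition~\ref{def: functor globcard to ograph}), hence bijective. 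The heart of the argument is the bijectivity on $m$-cells for $m\ge 1$, which I would obtain by comparing two descriptions of these cells.

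On the source side, Observation~\ref{obs: decomp} assigns to each $m$-cell $\gamma\colon Y\rightarrow X$ its unique $0$-decomposition $\DeComp{\gamma}{y}$; since $\gamma_0$ is incremental (Observation~\ref{obs: globcard incremental}) its image is a contiguous block $\{x_h,\dots,x_k\}$ of $X_0$ and $\gamma_0$ is an order isomorphism onto it, so the restrictions $\SubThing{\gamma}{\Pred{y},y}$ are indexed precisely by the consecutive pairs $(x_{i-1},x_i)$ with $h<i\le k$, each being an $(m-1)$-cell of $\FreeFunctor{\SubThing{X}{x_{i-1},x_i}}$. This identifies the $m$-cells of $\FreeFunctor X$ with $0$-domain $x_h$ and $0$-codomain $x_k$ with the tuples of $(m-1)$-cells, one drawn from each such factor (the degenerate case $h=k$ being the identity). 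On the target side, Observation~\ref{obs: enriched one type}, together with Observation~\ref{obs: restrict free} and the fact that by the definition of $\FunctorGlobCardToOGraph$ (Definition~\ref{def: functor globcard to ograph}) the edge-object of $\FunctorGlobCardToOGraph X$ between $x_{i-1}$ and $x_i$ is $\FunctorGlobCardToOGraph$ applied to the restriction $\SubThing{X}{x_{i-1},x_i}$, describes the $m$-cells of $\FreeFunctor{\FunctorGlobCardToOGraph X}$ as exactly the analogous tuples, drawn from the matching factors $\FreeFunctor{\FunctorGlobCardToOGraph\SubThing{X}{x_{i-1},x_i}}$ (with the terminal hom $\mathcal{C}_T$ accounting for the identities). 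Under these two identifications the defining formula $L\gamma=(L\SubThing{\gamma}{\Pred{y},y})_y$ exhibits $L$ as the componentwise application of the lower maps, so it is a bijection because each component is one by the induction hypothesis; this closes the induction and proves $L$ an isomorphism.

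I expect the only real obstacle to be bookkeeping rather than conceptual content: verifying that the $0$-decomposition of Observation~\ref{obs: decomp} is genuinely a bijection onto the product of cell-sets of the factors once the endpoints $x_h,x_k$ and the degenerate identity cells are accounted for, and checking that this bijection lines up index-for-index with the tuple description of Observation~\ref{obs: enriched one type}, so that $L$ becomes strictly componentwise. Once the two tuple descriptions are arranged to share the indexing by consecutive pairs of $X_0$, the conclusion follows formally from the bijectivity of the components supplied by the induction hypothesis.
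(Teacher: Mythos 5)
Your proof is correct, and in substance it is a reorganization of the paper's own argument rather than a genuinely different one: both hinge on the canonical $0$-decomposition (Observation \ref{obs: decomp}), on the fact that $\FreeIsom$ is defined componentwise on restrictions, and on the product description of the homs of $\FreeFunctor\FunctorGlobCardToOGraph{X}$ (Observations \ref{obs: restrict free} and \ref{obs: enriched one type}). The differences are organizational, and mostly to your advantage. The paper proves faithfulness and fullness separately, by induction on the dimension of \emph{cells}: in the faithful half it reconstructs an isomorphism $Y\cong Y'$ of globular cardinals by hand from the restriction data (using jointly epimorphic inclusions), and in the full half it lifts the components of a tuple and reassembles them using preservation of $0$-composition. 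You instead prove bijectivity on $n$-cells by induction on the dimension of $X$ and invert formally; note that the paper's induction on cell dimension only closes if its inductive hypothesis is quantified over \emph{all} globular cardinals, since that hypothesis is applied to restrictions such as $\SubThing{X}{\Pred{y},y}$, which live over a different, lower-dimensional base --- so the paper is implicitly running your induction anyway, and your version makes the well-foundedness transparent. Conversely, be aware that the ``bookkeeping'' you defer (that $\gamma\mapsto(\SubThing{\gamma}{\Pred{y},y})_y$ is a bijection onto tuples) is not entirely free: its injectivity is exactly the reconstruction step the paper carries out explicitly in its faithful half, and its surjectivity requires the suspension of globular cardinals and of their morphisms (Definition \ref{def: suspension GC}, in the manner of Observation \ref{obs: id = sub suspend}) to manufacture a cell of $\FreeFunctor{X}$ from a given tuple. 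Both ingredients are available in the paper, so this is detail to be written out rather than a gap.
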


\begin{proof}

The object set of ~ $\FreeFunctor{X}$ is isomorphic to ~ $X_0$ and the object set of ~
$	\FreeFunctor\FunctorGlobCardToOGraph{X}
$ is ~ $X_0$. %

\textbf{Faithful.} 
We show that ~$\FreeIsom$ is faithful using induction on the ~\MathText{n}{-cells} of 
~$	\FreeFunctor\FunctorGlobCardToOGraph{X}
$. %
Consider morphisms ~$\gamma\colon{Y}\rightarrow{X}
$ and  ~$\gamma'\colon{Y'}\rightarrow{X}
$ of ~ $\FreeFunctor{X}$. %
Suppose that 
~$\FreeIsom\gamma = \FreeIsom\gamma'
$ is a \MathText{0}{-cell} of ~
$	\FreeFunctor\FunctorGlobCardToOGraph{X}
$. %
Then ~$Y$ and ~$Y'$ are singletons, 
~$\gamma$ and ~$\gamma'$ have identical image
and so are isomorphic in ~$\GlobCard/X$. %
Hence they are identical in ~$\FreeFunctor{X}$. 

Assume that ~$\FreeIsom$ is faithful on \MathText{n}{-cells} and let ~
$	\FreeIsom\gamma = \FreeIsom\gamma'
$ be an \MathText{(n+1)}{-cell}. %
We construct an isomorphism ~
$	\alpha\colon{Y}\rightarrow{Y'}
$. %
By Observation ~ \ref{obs: decomp} then ~
$	\gamma
$ and ~
$	\gamma'
$ have canonical 
\MathText{0}{-decompositions} ~
$	\ZeroComp{y} \gamma(y)
$ and 
$	\ZeroComp{y'} \gamma'(y')
$ for ~ $y\in\ExceptFirst{Y_0}$ 
and ~ $y'\in\ExceptFirst{Y'_0}$ respectively. %
By the construction of these compositions and by definition of ~ $\FreeIsom$ then ~ $Y_0$ and ~ $Y'_0$ are isomorphic. %
Such an isomorphism is unique and we have ~
$	\alpha_0\colon Y_0\rightarrow Y'_0
$ as ~$Y_0$ and ~ $Y'_0$ are linear orders. %

Uniqueness of the \MathText{0}{-decompositions} 
in ~$
	\FreeFunctor\FunctorGlobCardToOGraph{X}
$ implies that $
	\FreeIsom \gamma(y)
=	\FreeIsom \gamma'(\delta_0 y)
$ for each ~$
	y\in \ExceptFirst{Y_0}
$. %
By definition of ~ $\FreeIsom$ then 
$	\left( \FreeIsom \gamma(y)(\Pred{y},y) 
	\right)
$ and ~
$	\left( \FreeIsom \gamma'(\delta_0 y)
	(\Pred{\delta_0 y'},\delta_0 y') 
	\right)
$ are identical and by induction the restrictions ~
$	 \gamma(y)(\Pred{y},y) 
$ and ~
$	\gamma'(\delta_0 y)
			(\Pred{\delta_0 y},\delta_0 y) 
$ are identical. %
Then there is a (unique) isomorphism ~ 
$		\SubThing{Y}{\Pred{y},y}
\cong
\SubThing{Y'}{\Pred{\delta_0 y}
					,\delta_0 y}
$ for each ~ $y\in \ExceptFirst{Y_0}$. %
As the corresponding inclusions into ~ $Y\circ u_1$
and ~ $Y'\circ u_1$ (respectively) are jointly epi then we have isomorphisms ~
$	\delta_n\colon Y_n\cong Y'_n
$ for $n\ge 1$. %
The source and target operations are preserved 
and we have ~ $\delta\colon{Y}\cong{Y'}$. %
Hence ~ $\gamma=\gamma'$ and ~ $\FreeIsom$ is 
faithful. %

\textbf{Full.} We show that ~$\FreeIsom$ is full using induction on the ~\MathText{n}{-cells} of 
~$	\FreeFunctor\FunctorGlobCardToOGraph{X}
$. %
Let ~$x$ be a ~\MathText{0}{-cell}. %
Then ~$x$ is an element of ~$X_0$. %
Let ~$Y$ be a globular cardinal with a single element ~$y$ and define a globular morphism ~
$	\gamma\colon{Y}\rightarrow{X}
$ by ~
$	\gamma_0^{\phantom{X}} y=x
$. %
Then ~$L\gamma=x$. 

Assume that ~$\FreeIsom$ is full on ~\MathText{n}{-cells} and let 
~$	x=(x_i)_{i=h+1}^{k}
$ 
be an ~\MathText{(n+1)}{-cell}. %
By induction we have ~\MathText{n}{-cells} ~$\gamma(i)$ such that ~$x_i=\FreeIsom \gamma(i)$.  %
Then as ~$\FreeIsom$ preserves composition we have
~$	\FreeIsom( \circ_{i=h+1}^{k} \gamma(i) ) 
=	\circ_{i=h+1}^{k} x_i
$ which is 
~$	(x_i)_{i=h+1}^{k}
$. %

Hence 
~$	\FreeIsom
	\colon\FreeFunctor{X}
	\rightarrow
		\FreeFunctor\FunctorGlobCardToOGraph{X}
$ is an isomorphism. %
\end{proof}
	\section{Equivalence between $\iPidiNT$ and $\Theta$}

	\label{section: iPidi and theta}

Michael Makkai and Marek Zawadowski demonstrate, in \cite{MZ_DSCD}, a duality between the category ~$\Disks$ as defined by Andr\'{e} Joyal in \cite{AJ_DDTC} and the category, denoted ~$\mathcal{S}$ in \cite{MZ_DSCD}, of \emph{simple} \TextCategories{\omega}. We refer the reader to \cite{MZ_DSCD} for the details, but quote their definition below. Note that ~$[G]$ is the free \TextCategory{\omega} on ~$G$ an ~$\omega\text{-graph}$ which we call a globular set. %

\begin{quote}
Let ~$G$ be an ~$\omega\text{-graph}$. Let us call an element (cell) ~$a$ of ~$[G]$ \emph{maximal} if it is \emph{proper}, that is, not an identity cell, and if the only monomorphisms ~$m \colon H \rightarrow G$ for which ~$a$ belongs to the image of ~$[m]$ are isomorphisms. Intuitively, an element is maximal if it is proper, and the whole graph ~$G$ is needed to generate it. We call ~$G$ \emph{composable} if ~$[G]$ has a \emph{unique} maximal element; in that case, the maximal element may be called the \emph{composite} of the graph.

\ldots

An \TextCategory{\omega} is \emph{simple} if it is of the form ~$[G]$ for a composable ~$\omega\text{-graph}$. The category ~$\mathcal{S}$ is defined as the full subcategory of ~$\wCat$ on the simple \TextCategory{\omega} as objects.
\end{quote}

In Proposition 4.8 of \cite{MZ_DSCD} they demonstrate that an ~$\omega\text{-graph}$ is composable if and only if it is a globular cardinal. %
Hence the objects of ~ $\mathcal{S}$ are \TextCategories{\omega} which are isomorphic to the free \TextCategory{\omega} ~ $\FreeFunctor X$
for some globular cardinal ~$X$. 

	\begin{definition}
	
We define a functor 
\[	\Psi\colon\iPidi\rightarrow\wCat.
\]
Define ~$\Psi$ on objects as the composite~ 
$	\mathfrak{F}\circ\ObjMapiPidiToOGraph
$. %
See Definitions \ref{def: pidi to ograph} and  \ref{def: ordinal free} and Observation \ref{obs: restrict free}, but we make it more explicit here. %
Let ~$H$ be an object of ~$\iPidi$. %
Let~ 
$	\mathcal{G} = \ObjMapiPidiToOGraph{H}
$. %
Then the objects of ~ $\mathcal{G}$ are those of 
~ $H$ 
and the edge-object
$	\mathcal{G}\left(\Pred{i},i\right)
$ 
is ~$
	\ObjMapiPidiToOGraph H(i)
$
for each
$	i\in\ExceptFirst{\Root{\mathcal{G}}}
$. %
The remaining edge-objects are empty ordinal graphs. %
Let 
$	\mathcal{A}=\mathfrak{F}\mathcal{G}
$. %
Then 
$	\mathcal{A}
$ 
is an \TextCategory{\omega} with object set ~
$\Root{\mathcal{G}}$ and hom-objects
\[	\mathcal{A}(i,j) = 
	\prod_{ k =i+1}^{j} 
	\mathfrak{F}\text{ } \ObjMapiPidiToOGraph H(k)
\]
when ~$i < j$ in ~$\Root{\mathcal{G}}$. %
For all ~$i\in\Root{\mathcal{G}}$ then ~$\mathcal{A}(x,x)$ is the terminal \TextCategory{\omega}. %
For ~$i>j$ then ~$\mathcal{A}(i,j)$ is the empty \TextCategory{\omega}. %
Notice that the trivial object 
~$[-1]$ of $\iPidi$ 
is sent to the empty \TextCategory{\omega} which is also initial. %

We define ~$\Psi$ on morphisms by induction on the height of their domain. 
Send morphisms with domain of dimension -1 (minus one) 
to the unique morphism out of the 
empty \TextCategory{\omega}. %
Assume that ~$\Psi$ is defined on morphisms with
domain of height ~$n$ and let
~$	{g}\colon{H}\rightarrow{K}
$ 
have domain of height ~$n+1$. %
We construct an \TextFunctor{\omega} ~$
	\mathcal{F}\colon\Psi{H}\rightarrow\Psi{K}
$ from the data of ~$g$. %
Let ~
$	\mathcal{C}=\Psi{H}
$ 
and ~
$	\mathcal{A}=\Psi{K}
$. %
Then~$\mathcal{C}$ has object set ~$	\Obj H$ and ~$\mathcal{A}$ has object set ~$\Obj K$. %
Define the object map
of ~$\mathcal{F}$ as the object map of ~ $g$. %
We construct an \TextFunctor{\omega} $
	\mathcal{F}_{\Pred{i},i}
	\colon
	\mathcal{C}\left(\Pred{i},i\right) 
	\rightarrow
	\mathcal{A}\left(\mathcal{F} (\Pred{i})
					,\mathcal{F}(i)\right)
$
for each ~ $i\in\ExceptFirst{\Obj\mathcal{C}}$. %
From the definition of ~$\Psi$ on objects we have ~$
	\mathcal{C}\left(\Pred{i},i\right) 
=	\mathfrak{F}\ObjMapiPidiToOGraph H(i)
$ and \[
	\mathcal{A}\left(\mathcal{F}\left(\Pred{i}\right)
				,\mathcal{F}\left(i\right)\right) 
	= \prod_{j=\mathcal{F}\left(\Pred{i}\right) + 1}
		^{\mathcal{F}\left(i\right)}
		\mathfrak{F}\ObjMapiPidiToOGraph K(j)
\]
where ~$
	J_{i}=\{\mathcal{F}\left(\Pred{i}\right) + 1, 
		..., \mathcal{F}\left(i\right)\}
$ is the index set of the product. %
From Observation ~\ref{obs: the pin} we have
~$
J_{i} = \left(g^\wedge\right)^\FiberOver(i)
$
and so 
~$
	g^{\wedge}(j)
=	i
$
for all ~$ j \in J_{i}$. %
We have a morphism ~$
	g(j)
	\colon{H(i)}
	\rightarrow{K(j)}
	\label{eqn: subtree maps}
$ of $\iPidi$
with domain of height ~$n$ for each ~ $j\in J_i$ by definition of ~ $g$. %
By induction, there are \TextFunctors{\omega}
~$	\label{eqn: into the productands}
	\Psi g(j)
	\colon \mathfrak{F}\ObjMapiPidiToOGraph {H(i)}
	\rightarrow \mathfrak{F}\ObjMapiPidiToOGraph {K(j)}
$
which by the universal property of the product give the required morphism
\begin{eqnarray} \label{eqn: into the product}
	\mathcal{F}_{\Pred{i},i}\colon
	\mathfrak{F}\ObjMapiPidiToOGraph H(i)
	\rightarrow
	\prod_{i=\mathcal{F}\left(\Pred{i}\right) + 1}
		^{\mathcal{F}\left(i\right)}
		\mathfrak{F}\ObjMapiPidiToOGraph K(j).
\end{eqnarray} 
We have 
$	\mathcal{F}_{i,j}=
	\prod_{k=i+1}
		^{j}
		\mathcal{F}_{\Pred{k},k}
$ 
by Definition \ref{def: ordinal free} and Observation \ref{obs: restrict free} for ~$i < j$. %
For ~$i$ an object of ~$\mathcal{C}$ then 
~$	\mathcal{F}_{i,i}
$ is the unique morphism into the terminal \TextCategory{\omega}. %
For ~$i>j$ then 
~$	\mathcal{F}_{i,j}
$ has domain the initial (empty) \TextCategory{\omega}. %

\textbf{Preserves composition.} 
Let ~$g'\colon{H}\rightarrow{K}$ and 
~$	g''\colon{K}\rightarrow{L}
$ be composable morphisms of $\iPidi$. %
Put
~$	\mathcal{F}'=\Psi(g')
$, 
~$	\mathcal{F}''=\Psi(g'')
$ and 
~$	\mathcal{F}=\Psi(g)
$ where ~$g=g''\circ g'$. %
Let ~ $i$ be an element of ~ $\Root{H}$, let ~$
	J_{i} 
=	\left({g'}^\wedge\right)^\FiberOver(i)
$ and let ~$
	L_{j} 
=	\left({g''}^\wedge\right)^\FiberOver(j)
$. %
Then $
	L_{i} 
=	\left({g''\circ g'}^\wedge\right)^\FiberOver(i)
$ is identically ~ $\bigcup_{j\in J_i} L_j$. %

We show 
~$	\mathcal{F}=\mathcal{F}''\circ\mathcal{F}'
$ by showing that the upper horizontal composite of
\[\bfig
\qtriangle(0,0)|alr|/->`->`->/<1400,600>%
	[\FreeFunctor\ObjMapiPidiToOGraph H(i)
	`\prod_{j=J_i}
		\FreeFunctor\ObjMapiPidiToOGraph K(j)
	`\FreeFunctor\ObjMapiPidiToOGraph{K}(j)
	;\mathcal{F'}_{\Pred{i},i}
	`\Psi g'(j)
	`\mathrm{pr}
	]
\square(1400,0)|arra|/->`->`->`->/<1400,600>%
	[\prod_{j=J_i}
		\FreeFunctor\ObjMapiPidiToOGraph K(j)
	`\prod_{\ell=L_i}
		\FreeFunctor\ObjMapiPidiToOGraph{L}(\ell)
	`\FreeFunctor\ObjMapiPidiToOGraph{K}(j)
	`\prod_{\ell=L_j}
		\FreeFunctor\ObjMapiPidiToOGraph{L}(\ell)
	;\prod_j 
		\mathcal{F''}_{\Pred{j},j}
	`\mathrm{pr}
	`\mathrm{pr}
	`\mathcal{F''}_{\Pred{j},j}
	]
\qtriangle(1400,-600)|alr|/->`->`->/<1400,600>%
	[\FreeFunctor\ObjMapiPidiToOGraph{K}(j)
	`\prod_{\ell=L_j}
		\FreeFunctor\ObjMapiPidiToOGraph{L}(\ell)
	`\FreeFunctor\ObjMapiPidiToOGraph{L}(\ell)
	;\mathcal{F''}_{\Pred{j},j}
	`\Psi g''(\ell)
	`\mathrm{pr}
	]
\efig\]
is identically ~$\mathcal{F}_{\Pred{i},i}$. %
For each ~$l$ in ~$\Root{L}$ then 
~$	g(\ell)
	\colon H(g^\wedge\ell)
	\rightarrow L(\ell)
$
is 
~$	g''(\ell)\circ g'(j)
$ by definition of composition in ~ $\iPidi$ where ~ $j=g^{''\wedge} \ell$. %
By induction then ~$\Psi g(\ell)$ is the diagonal composite. The construction ending at line \ref{eqn: into the product} gives ~$	
	\mathcal{F}_{\Pred{i},i}$ 
as the unique map 
~$
	\FreeFunctor\ObjMapiPidiToOGraph H(i)
	\rightarrow
	\prod_{\ell=L_i} 
	\FreeFunctor\ObjMapiPidiToOGraph L(\ell)
$ 
which is 
~$	\prod_j 
		\mathcal{F}{''}_{\Pred{j},j} 
		\circ 
		\mathcal{F'}_{\Pred{i},i}
$ as required. %
\end{definition}

	\begin{theorem} \label{thm: 222}

The category ~$\iPidi$ is equivalent to the 
category ~$\Theta_+$ by
\[	\Psi\colon\iPidi\rightarrow\Theta_+. %
\] 
\end{theorem}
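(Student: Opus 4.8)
The plan is to prove that $\Psi$ is essentially surjective, faithful and full, following the template already used for Theorem~\ref{thm: 121 iDisc}. First I would record that $\Psi$ really does corestrict to $\Theta_+$: for an object $H$ of $\iPidi$ the $\omega$-category $\Psi H=\FreeFunctor\,\ObjMapiPidiToOGraph H$ is a free $\omega$-category on an ordinal graph, and by the equivalence $\GlobCard\simeq\OGraph$ (Theorem~\ref{thm: globcard to ograph}) together with Lemma~\ref{lem: free globcard ograph} it is isomorphic to $\FreeFunctor X$ for a globular cardinal $X$, hence lies in $\Theta_+$. Essential surjectivity is then the converse reading of the same facts: given $\FreeFunctor X$ in $\Theta_+$, Lemma~\ref{lem: free globcard ograph} gives $\FreeFunctor X\cong\FreeFunctor\,\FunctorGlobCardToOGraph X$, and Theorem~\ref{theorem: iPidi to OGraph} supplies an $H$ with $\ObjMapiPidiToOGraph H=\FunctorGlobCardToOGraph X$, so that $\Psi H\cong\FreeFunctor X$.

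The rest is full faithfulness, which I would establish by induction on the height of the domain, in parallel with the construction of $\Psi$ on morphisms. The key structural input for the inductive step is that the object map of any $\omega$-functor $\mathcal{F}\colon\Psi H\to\Psi K$ is automatically an ordinal map: by the description of $\FreeFunctor$ on ordinal graphs (Observation~\ref{obs: restrict free}) the hom-object $(\Psi H)(i,j)$ is non-empty exactly when $i\le j$, so $\mathcal{F}$ must preserve the order on the roots. This fixes the ordinal part $g$ of the morphism we are trying to produce and, through $(\Slot)^{\wedge}$ and Observation~\ref{obs: the pin}, determines the fibre $J_i=\left(g^{\wedge}\right)^{\FiberOver}(i)=\{\mathcal{F}(\Pred{i})+1,\ldots,\mathcal{F}(i)\}$. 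Composing the component $\mathcal{F}_{\Pred{i},i}\colon\FreeFunctor\,\ObjMapiPidiToOGraph H(i)\to\prod_{j\in J_i}\FreeFunctor\,\ObjMapiPidiToOGraph K(j)$ with the product projections yields, for each non-endpoint $j\in J_i$, an $\omega$-functor $\Psi H(i)\to\Psi K(j)$ between images of objects of strictly smaller height.

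For faithfulness I would argue that if $\Psi g=\Psi g'$ then their object maps agree, hence the fibres $J_i$ and the components $\mathcal{F}_{\Pred{i},i}$ agree; comparing projections and invoking the inductive hypothesis forces $g(j)=g'(j)$ for every non-endpoint $j$, while at the endpoints of $(\Root{K})^{\wedge}$ both $g(j)$ and $g'(j)$ are the unique maps between trivial objects, since $g^{\wedge}$ sends endpoints to endpoints by Observation~\ref{obs: outside sent to endpoints}; thus $g=g'$. For fullness I would feed each projection above into the inductive hypothesis to obtain a unique morphism $g(j)\colon H(i)\to K(j)$ of $\iPidi$, assemble these together with the ordinal object map (and the forced endpoint maps) into a candidate $g$, and use the universal property of the product, together with the composition-preservation of $\Psi$ already proved, to conclude $\Psi g=\mathcal{F}$.

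The main obstacle I anticipate is the verification, in the inductive step for fullness, that the reconstructed data $\bigl(g,(g(j))_{j}\bigr)$ genuinely forms a morphism of $\iPidi$: one must check that the domains $H(g^{\wedge}j)$ and codomains $K(j)$ of the recovered maps are exactly those demanded by the definition of $\iPidi$, and that the endpoint factors --- where the relevant objects are trivial and the corresponding factors of the product are empty $\omega$-categories --- cause no breakdown of the product decomposition. These matchings are precisely what Observations~\ref{obs: the pin} and~\ref{obs: outside sent to endpoints} are designed to guarantee, so once the fibre description of $J_i$ is in place the remaining checks are routine.
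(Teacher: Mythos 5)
Your proposal is correct and follows essentially the same route as the paper: essential surjectivity via Lemma~\ref{lem: free globcard ograph} (together with the surjectivity of $\ObjMapiPidiToOGraph$ from Theorem~\ref{theorem: iPidi to OGraph}), and full faithfulness by induction on the height of the domain, using Observation~\ref{obs: the pin} to identify the fibres $J_i$, the product projections to extract the component functors, and Observation~\ref{obs: outside sent to endpoints} to handle the indices outside $J$ where the relevant objects are trivial. Your two additions --- checking that $\Psi$ genuinely lands in $\Theta_+$, and noting that the object map of any $\omega$-functor $\mathcal{F}\colon\Psi H\rightarrow\Psi K$ is automatically order-preserving because $(\Psi H)(i,j)$ is empty for $i>j$ --- are points the paper leaves implicit, and they strengthen rather than alter the argument.
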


	\begin{proof} 
The functor $\Psi$ is 
essentially surjective by 
Lemma \ref{lem: free globcard ograph}. 

	\paragraph{\textbf{Faithful}} 
	
We show ~$\Psi$ is faithful using induction on the dimension of the domain of \TextFunctors{\omega}. %
As the initial object of $\iPidi$ 
is sent to the initial 
\TextCategory{\omega} then ~$\Psi$ is faithful on morphisms with domains of dimension \TextMinusOne\ (minus one) as their domain is the initial object. %
Assume ~$\Psi$ is faithful on morphisms with domain
of dimension ~$n$ and that 
~$	{g},{g'}\colon{H}\rightarrow{K}
$
are parallel morphisms of $\iPidi$ 
with domains of dimension ~$n+1$ such that 
~$	\mathcal{F} = \Psi{g}
$
and
~$	\mathcal{F}' =\Psi{g'}
$
are identical. %
Then 
~$	g=g'
$ as the object maps of ~$\mathcal{F}$ and ~$\mathcal{F}'$ are identical. %

Let ~$\mathcal{C}=\Psi{H}$. %
For each ~
$	{i}\in\ExceptFirst{\Obj \mathcal{C}}$ the construction ending at line \ref{eqn: into the product} gives
\[
	\mathcal{F}_{\Pred{i},i}^{\phantom{'}} 
=	\mathcal{F}_{\Pred{i},i}' 
	\colon
	\mathfrak{F}\ObjMapiPidiToOGraph H(i)
	\rightarrow
	\prod_{j=\mathcal{F}\left(\Pred{i}\right) + 1}
		^{\mathcal{F}\left(i\right)}
		\mathfrak{F}\ObjMapiPidiToOGraph K(j)
\]
where the index     set of the product is 
~$	J_i=\{
		\mathcal{F}\left(\Pred{j}\right) + 1
		, \mathcal{F}\left(\Pred{j}\right) + 2
		, \ldots
		, \mathcal{F}\left(j\right)
		\}
$. %
By construction, composition with the projections out of the product 
gives
\[
	\Psi g(j) =
	\Psi g'(j)
	\colon
	\mathfrak{F}\ObjMapiPidiToOGraph H({i})
	\rightarrow
	\mathfrak{F}\ObjMapiPidiToOGraph K({j})
\]
and by the induction assumption 
~$		g(j)
	=	g'(j)
$
for each ~$ j \in J_{i}$. %
Let 
~$	{J} = \bigcup_{i\in\ExceptFirst
						{\Obj\mathcal{C}}}
				J_{i}
$. %
It remains to show for ~$j\not\in{J}$ that the morphisms 
~$g(j)$ and ~$g'(j)$ determined by ~${j}$ are identical. 

For ~${j}\not\in{J}$ then 
~$	g^\wedge j
$ 
is an endpoint by Observation \ref{obs: outside sent to endpoints}. %
Hence ~$H(g^\wedge j)$ is trivial and
~$	g(j),g'(j)
	\colon{H(g^\wedge j)}
	\rightarrow{K(j)}
$ 
are identical. %
Therefore ~$\Psi$ is faithful. 

	\paragraph{\textbf{Full}}

We show that ~$\Psi$ is full using induction on the dimension of the domain of \TextFunctors{\omega}. %
Let ~$\mathcal{F}\colon\Psi{H}\rightarrow\Psi{K}$ be an \TextFunctor{\omega} where ~$\mathcal{C}=\Psi{H}$. 
We construct a morphism 
~$g$ of $\iPidi$ from the data of ~$\mathcal{F}$ 
such that ~$\Psi g = \mathcal{F}$.
Define the object map of ~$g$ as the object map of ~$\mathcal{F}$. 

Let ~$\mathcal{F}$ be an \TextFunctor{\omega} with 
domain of dimension \TextMinusOne\ (minus one). %
Then ~$\mathcal{F}$ has domain an initial \TextCategory{\omega} and so ~$g$ is the unique morphism
~$	[-1]\rightarrow{K}
$. %
Assume that ~$\Psi$ is full for \TextFunctors{\omega} with domain of dimension ~$n$ and suppose ~
$\mathcal{F}$ has domain of dimension ~$n+1$. 
We have for each object ~
$	j\in\ExceptFirst{\Obj\mathcal{C}}$ morphisms (\TextFunctors{\omega}) 
\[	\mathcal{F}_{\Pred{i},i}
	\colon 
	\mathcal{C}(\Pred{i},i) 
	\rightarrow
	\mathcal{A}(\mathcal{F} (\Pred{i})
				,\mathcal{F}(i))
\]
with domains of dimension ~$n$ which are identically 
\[	\mathcal{F}_{\Pred{i},i}
	\colon 
	\Psi H(i)
	\rightarrow 
	\prod	_{j=\mathcal{F} (\Pred{i})+1}
			^{\mathcal{F}(i)}
	\Psi K(j).
\]
The product has index set 
~$	J_{i}
=		\{
		\mathcal{F}\left(\Pred{j}\right) + 1
		, \mathcal{F}\left(\Pred{j}\right) + 2
		, \ldots
		, \mathcal{F}(i)
		\}
$. %
Composition with the projections of the product gives for each ~$
	j \in J_{i}
$ an \TextFunctor{\omega} \[
	\mathcal{F}(i,j)
	\colon 
	\Psi H(i)
	\rightarrow 
	\Psi K(j). %
\]
By induction there is a morphism 
~$g(i,j)$ such that 
~$	\Psi g(i,j) = \mathcal{F}(i,j)
$ 
for each ~$j\in{J_{i}}$ and each 
~$	i\in\ExceptFirst{\Obj\mathcal{C}}
$. %
Let 
~$	J=\bigcup_{}
				J_{i}
$ where the union is indexed over ~ 
$	\ExceptFirst{\Obj\mathcal{C}}$. %
It remains to define, for each ~${j}\not\in{J}$, morphisms 
~$	g(j)
	\colon H(g^{\wedge} j)
	\rightarrow K(j)
$. %
For such ~${j}$ then 
~$	g^\wedge j
$ 
is an endpoint by Observation ~\ref{obs: outside sent to endpoints}. %
Hence ~$H(g^\wedge j)$ is initial and ~$g(i)$ is determined. Therefore ~$\Psi$ is full. 
\end{proof}

	\begin{corollary} \label{}

The category ~$\iPidiNT$ is equivalent to the 
category ~$\mathcal{S}$. %
\end{corollary}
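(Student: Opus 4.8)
The plan is to obtain this as a restriction of the equivalence established in Theorem \ref{thm: 222}, in exactly the way the reduced categories $\Disks$ and $\iDiscNT$ were extracted from their augmented counterparts. Theorem \ref{thm: 222} gives an equivalence $\Psi \colon \iPidi \to \Theta_+$, and $\iPidiNT$ is by definition the full subcategory of $\iPidi$ on the non-trivial objects. Since an equivalence of categories restricts to an equivalence between any full subcategory and the full subcategory of its codomain spanned by the essential image, the whole task reduces to identifying that essential image with $\mathcal{S}$.

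First I would pin down $\mathcal{S}$ in the present language. By the quoted Proposition 4.8 of \cite{MZ_DSCD}, an $\omega$-graph is composable precisely when it is a globular cardinal, so the objects of $\mathcal{S}$ are exactly the $\omega$-categories isomorphic to $\FreeFunctor X$ for a non-empty globular cardinal $X$; as $\mathcal{S}$ is a full subcategory of $\wCat$, this makes $\mathcal{S}$ coincide with $\Theta$, the non-augmented counterpart of $\Theta_+$. Next I would track $\Psi$ across the trivial/non-trivial divide. On objects $\Psi$ is $\FreeFunctor\circ\ObjMapiPidiToOGraph$, so the trivial object $[-1]$ goes to the empty ordinal graph and hence to the empty (initial) $\omega$-category, which is not simple. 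Any non-trivial object $H$ has height at least one, so $\ObjMapiPidiToOGraph H$ is a non-empty ordinal graph; under the equivalence $\GlobCard \simeq \OGraph$ of Theorem \ref{thm: globcard to ograph} this is $\FunctorGlobCardToOGraph X$ for a non-empty globular cardinal $X$, and Lemma \ref{lem: free globcard ograph} yields $\Psi H \cong \FreeFunctor X$. Thus $\Psi$ carries $\iPidiNT$ into $\mathcal{S}$.

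For essential surjectivity I would run the same chain backwards: given an object $\FreeFunctor X$ of $\mathcal{S}$ with $X$ non-empty, transport $X$ across $\GlobCard \simeq \OGraph$ to a non-empty ordinal graph, realize the latter as $\ObjMapiPidiToOGraph H$ using the surjectivity of $\ObjMapiPidiToOGraph$ (Theorem \ref{theorem: iPidi to OGraph}), note that non-emptiness forces $H$ non-trivial, and apply Lemma \ref{lem: free globcard ograph} again to get $\Psi H \cong \FreeFunctor X$. Fullness and faithfulness are then inherited from $\Psi$, since both $\iPidiNT \subseteq \iPidi$ and $\mathcal{S} \subseteq \Theta_+$ are full. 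The one delicate point, and the only real obstacle, is the bookkeeping that the trivial objects on the $\iPidi$ side match exactly the empty globular cardinals on the $\Theta_+$ side, so that removing the single trivial object removes precisely the single object of $\Theta_+$ lying outside $\mathcal{S}$; this is guaranteed by the height/dimension indexing recorded in Definition \ref{def: pidi to ograph} together with Lemma \ref{lem: free globcard ograph}.
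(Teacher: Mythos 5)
Your proposal is correct and follows exactly the route the paper intends: the corollary is stated as an immediate consequence of Theorem \ref{thm: 222}, obtained by restricting the equivalence $\Psi\colon\iPidi\rightarrow\Theta_+$ to the full subcategory of non-trivial objects and observing that its essential image is precisely $\mathcal{S}$ (via Theorem \ref{thm: globcard to ograph}, Theorem \ref{theorem: iPidi to OGraph} and Lemma \ref{lem: free globcard ograph}). Your write-up simply supplies the bookkeeping the paper leaves implicit, and it does so correctly.
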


We have demonstrated that the categories ~$\Disks$ and ~$\Theta$ are dual using the well-known equivalence between intervals and ordinals. %
The category ~ $\iDiscNT$ has been shown equivalent 
to ~$\Disks$ and provides a description of disks as 
inductively defined intervals. %
Similarly ~$\iPidiNT$ has been shown equivalent 
to ~$\Theta$ and provides a description of ~$\Theta$ 
in terms of inductively defined ordinals. %

	\section{Labeled trees}

	\label{section: trees and discs}
	
In this section we define dual categories, 
named ~$\DiscNT$ and ~$\PidiNT$, which are equivalent 
to the category ~  $\Disks$ and the 
category ~ $\Theta$ (respectively). %
To do so we construct categories 
named ~$\Disc$ and ~$\Pidi$, 
which are augmented counterparts 
to ~$\DiscNT$ and ~$\PidiNT$ and show
that they are dual using the equivalence 
between ordinals and intervals. %
They are constructed from trees whose vertices are labeled with intervals and ordinals (respectively) and which satisfy certain conditions. %

We begin by defining the concept of labeled tree 
with appropriate restriction and suspension operations. %
After defining the concept of constrained tree 
we state and prove a general theorem which gives mild 
conditions which allow an equivalence of categories 
to be lifted to an equivalence between certain categories 
of constrained trees labeled by the categories of the
original equivalence. %

We then define dual categories of constrained trees 
labeled by $\mathcal{I}_+$ and $\Delta_+$ which satisfy 
the conditions of the general theorem. %
After defining the concept of cropped tree the 
above equivalence is then restricted to the full 
subcategories with objects the cropped trees of 
positive degree. %
 
	\begin{definition} \label{def: labeled tree} 
	
A \emph{forest ~$(A,F)$ labeled in a category ~$\mathcal{C}$} is a forest equipped with functors ~$F_n\colon A_n \rightarrow \mathcal{C}$ for all ~$n\in\mathbb{N}$ where ~$A_n$ is considered a discrete category. A \emph{tree ~$(A,F)$ labeled in a category ~$\mathcal{C}$} has ~$A$ a tree. 
\end{definition}

	\begin{definition}

	\label{def: labeled tree morphism} 

A \emph{forest morphism 
~$	(f,\alpha)\colon(A,F)\rightarrow (B,G)
$ 
in ~$\mathcal{C}$} is given by a tree map 
~$	f\colon A\rightarrow B
$ 
of trees and a set of natural transformations
\[	\alpha_n\colon F_n\Rightarrow G_n\circ f_n
		\colon A\rightarrow\mathcal{C}
\]
for all ~$n\in\mathbb{N}$. %
A \emph{tree morphism 
~$	(f,\alpha)\colon{A}\rightarrow{B}
$
in ~$\mathcal{C}$} has both $A$ and $B$ labeled trees.

Identity morphisms are given by an identity set map and identity natural transformations. Let 
~$	(f\colon A\rightarrow B, 
	\alpha\colon F\Rightarrow Gf)
$
and 
~$	(g\colon B\rightarrow C, 
	\beta\colon G\Rightarrow Hg)
$ 
be composable morphisms. %
Then ~$(g,\beta)\circ(f,\alpha)$ is defined as 
\[	(g\circ f\colon A\rightarrow C, 
	\beta_f \circ \alpha\colon F\Rightarrow H g f).
\] %

A \emph{forest op-morphism
$	(f,\alpha)
	\colon(B,G)
	\rightarrow(A,F)
$ 
in ~$\mathcal{C}$} is a tree morphism in ~$\mathcal{C}^\op$ and so is a tree map 
~$f\colon A\rightarrow B
$ 
and a set of natural transformations
\[	\alpha_n\colon G_n\circ f_n \Rightarrow F_n
		\colon A\rightarrow\mathcal{C}
\]
for all ~$n\in\mathbb{N}$. %
Let 
~$	(g\colon B\rightarrow C, 
	\beta \colon Hg\Rightarrow G)
$
and
~$	(f\colon A\rightarrow B, 
	\alpha \colon Gf\Rightarrow F)
$ be composable op-morphisms. %
Then ~$(f,\alpha)\circ(g,\beta)$ is defined as 
\[	(g\circ f\colon A\rightarrow C, \alpha\circ\beta_f \colon Hgf \Rightarrow F).
\]

We have the category $\Forest(\mathcal{C})$ of labeled forests and morphisms in $\mathcal{C}$ and its full subcategory $\Tree(\mathcal{C})$ with objects labeled trees in ~$\mathcal{C}$. %
\end{definition}

	\begin{definition} \label{def: rest labeled} 

We define a \emph{restriction} operation on labeled trees. %
The \emph{\SubThingText{(A,F)}{x}} is 
denoted 
~$	(\SubThing{A}{x}
	,\SubThing{F}{x}
	)
$
where ~$(A,F)$ is a labeled forest and ~$x$ is an element of ~$A_n$ has ~$A(x)$ given by 
Definition \ref{def: subtree} and ~$
	\SubThing{F}{x}_{m}
$ defined as the composite 
\[
\SubThing{A}{x}_m
\to/->/<500>^{\mathrm{incl.}}
A_{n+m} 
\to/->/<500>^{F_{n+m}}
\mathcal{C}.
\] %

The 
\emph{\SubThingText{(f,\alpha)}{x}} is denoted 
~$	(\SubThing{f}{x},\SubThing{\alpha}{x})
$
where 
~$	(f,\alpha)\colon{A}\rightarrow{B}
$ is a forest morphism in $\mathcal{C}$ and ~$x$ is an element of ~$A_n$ has ~$\SubThing{f}{x}$ given by Definition \ref{def: subtree} and
~$	\SubThing{\alpha}{x}_m
$ 
given by the composite pasting diagram
\[\bfig
\place(350,-100)[\alpha_{n+m}]
\place(350,-200)[\Rightarrow]
\square(0,0)|alra|/->`->`->`->/
	<700,300>[\SubThing{A}{x}_{m}`\SubThing{B}{f_n x}_{m}`A_{n+m}`B_{n+m}
		;f(x)_{m}
		`\mathrm{incl}
		`\mathrm{incl}
		`f_{n+m}]
\square(0,-300)|alrb|/->`->`->`->/
	<700,300>[A_{n+m}`B_{n+m}
			`\mathcal{C}`\mathcal{C}
		;f_{n+m}`F_{n+m}`G_{n+m}`1]
\efig
\]
where the two vertical composites are ~$\SubThing{F}{i}_{m}$ and ~$\SubThing{G}{f_n x}_{m}$ (respectively). %

The 
\emph{\SubThingText{(f,\alpha)}{x}} is denoted 
~$	(\SubThing{f}{x},\SubThing{\alpha}{x})
$
where ~$(f,\alpha)$ is a forest op-morphism (with 
~$	f\colon A\rightarrow B
$ 
and 
~$	\alpha\colon Gf\Rightarrow F
$) 
and ~$x$ is an element of ~$A_{n}$ has 
~$	\SubThing{f}{x}
$
given by Definition \ref{def: subtree} and has 
~$	\SubThing{\alpha}{x}_m
$ 
given by the composite pasting diagram
\[\bfig
\place(450,-100)[\alpha_{n+m}]
\place(450,-200)[\Rightarrow]
\square(0,0)|alra|/<-`->`->`<-/<900,300>%
	[\SubThing{B}{f_n x}_{m}
	`\SubThing{A}{x}_{m}
	`B_{n+m}
	`A_{n+m}
	;\SubThing{f}{x}_{m}
	`\mathrm{incl}
	`\mathrm{incl}
	`f_{n+m}
	]
\square(0,-300)|alrb|/<-`->`->`->/<900,300>%
	[B_{n+m}
	`A_{n+m}
	`\mathcal{C}
	`\mathcal{C}
	;f_{n+m}
	`G_{n+m}
	`F_{n+m}
	`1
	]
\efig
\]
where the two vertical composites are ~$\SubThing{G}{f_n x}_{m}$ and ~$\SubThing{F}{x}_{m}$ (respectively). %
\end{definition}

	\begin{definition}
	
	\label{def: suspension treeC}

Define a \emph{suspension} functor
\[	\Suspend
	\colon\CatName{Forest}(\mathcal{C})
		\times\mathcal{C}
	\rightarrow\Tree(\mathcal{C})
\]
as follows. %
Given a forest ~$(A,F)$ in $\mathcal{C}$ and an object $c$ in $\mathcal{C}$ then the \emph{\SuspendThingText{A}{c}} denoted
~$	\SuspendThing{A}{c}
$ is ~$(A',F')
$
where ~$A'=\Suspend A$ and ~$F'$ is defined by ~$F'_0(\ast)=c$ and ~$F'_{n+1}=F_{n}$ for $n\in\mathbb{N}$. %

Given a forest morphism 
~$	(f,\alpha)
	\colon(A,F)
	\rightarrow(B,G)
$ and a morphism ~$g\colon{c}\rightarrow{d}$ of ~$\mathcal{C}$ then the 
\emph{\SuspendThingText{f,\alpha}{g}} denoted
~$	\SuspendThing{f}{g} 
$ is ~
$	(f',\alpha')
	\colon \SuspendThing{A}{c}
	\rightarrow \SuspendThing{B}{d}
$
where ~$f'=\Suspend f$ and ~$\alpha'$ is defined 
by ~
$	\alpha'_0(\ast)=g
$ and ~
$	\alpha'_{n+1}=\alpha_{n}$ for $n\in\mathbb{N}
$. %
\end{definition}

	\begin{observation}
	
	\label{obs: id = sub suspend + F}
	
This is the ``labeled'' counterpart to 
Observation \ref{obs: id = sub suspend}. %
The coproduct of a collection of labeled trees 
is a labeled forest and its suspension is a labeled tree. The labeled subtrees of the 
suspension are isomorphic to the trees of the original collection. %
We provide the details below. %

Let $c$ be an object of ~$\mathcal{C}$ and ~$(A(i),F(i))$ be a labeled tree in ~$\mathcal{C}$ with ~$A(i)_{0}=\{x_{i}\}$ for each ~$i$ in a set ${I}$. %
Let
~$	(A',F')
$
be the \SuspendThingText{(\sum A(i),\sum F(i))}{c}. %
By Observation \ref{obs: id = sub suspend} then 
~$	\mathrm{copr}(i)
	\colon A(i)
	\cong \SubThing{A'}{x_i}
$ and the left triangle of
$$\bfig
\Atrianglepair/->`->`->`->`-->/<600,500>%
	[A(i)_n`A'(x_i)_n`\sum A(i)_n`\mathcal{C}
	;\cong`\mathrm{copr}`F(i)_n`\mathrm{incl.}`]
\efig$$
commutes. %
The lower composite is ~$\SubThing{F'}{x_i}_n$ and we have an isomorphism 
\[	(\mathrm{copr(i)}
	, \Id	\colon F(i)
			\Rightarrow \SubThing{F'}{x_i}
				\circ \mathrm{copr}(i)
	)
\]
between ~$(A(i),F(i))$ and 
~$	(\SubThing{A'}{x_i},\SubThing{F'}{x_i})
$. %
\end{observation}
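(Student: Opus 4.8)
The plan is to bootstrap from the unlabeled statement, Observation \ref{obs: id = sub suspend}, and then to check that the tree isomorphism it already produces is compatible with the labels. First I would record that the coproduct $(\sum A(i),\sum F(i))$ really is a labeled forest in the sense of Definition \ref{def: labeled tree}: its underlying forest is $\sum A(i)$, and since each level $(\sum A(i))_n$ is the coproduct $\sum A(i)_n$ of discrete categories, the copairing $\sum F(i)_n\colon\sum A(i)_n\rightarrow\mathcal{C}$ of the label functors is a well-defined functor for every $n$. Applying the suspension functor of Definition \ref{def: suspension treeC} over $c$ then yields the labeled tree $(A',F')=\SuspendThing{(\sum A(i),\sum F(i))}{c}$, where $A'=\Suspend\sum A(i)$, $F'_0(\ast)=c$, and $F'_{n+1}=\sum F(i)_n$.

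Next I would invoke Observation \ref{obs: id = sub suspend} directly, since it already supplies, for each $i$, a tree isomorphism $\mathrm{copr}(i)\colon A(i)\cong\SubThing{A'}{x_i}$ given by the coprojection. So the only remaining work is to upgrade this to an isomorphism of \emph{labeled} trees, that is, to produce the natural-transformation component required by Definition \ref{def: labeled tree morphism} and to verify that the resulting morphism is invertible. The heart of the argument is to compute $\SubThing{F'}{x_i}_n$: by the restriction operation of Definition \ref{def: rest labeled} it is the composite of the inclusion $\SubThing{A'}{x_i}_n\rightarrow A'_{n+1}$ with $F'_{n+1}$. Precomposing with $\mathrm{copr}(i)_n\colon A(i)_n\rightarrow\SubThing{A'}{x_i}_n$, and using both $F'_{n+1}=\sum F(i)_n$ and the coproduct factorization of the inclusion supplied by the unlabeled Observation, I would establish the identity $\SubThing{F'}{x_i}_n\circ\mathrm{copr}(i)_n=F(i)_n$. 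This is exactly the commuting triangle asserted in the statement, and it says precisely that the identity natural transformation $\Id\colon F(i)\Rightarrow\SubThing{F'}{x_i}\circ\mathrm{copr}(i)$ is available.

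With these two pieces in hand I would conclude that the pair $(\mathrm{copr}(i),\Id)$ is a morphism of labeled trees from $(A(i),F(i))$ to $(\SubThing{A'}{x_i},\SubThing{F'}{x_i})$; its inverse is assembled from the tree inverse $\mathrm{copr}(i)^{-1}$ together with the (inverse) identity transformation, so it is an isomorphism in $\Tree(\mathcal{C})$, as claimed.

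The main obstacle — indeed the only substantive point, everything else being a direct transcription of the unlabeled Observation \ref{obs: id = sub suspend} — is the label-compatibility check: confirming that restricting the suspended coproduct label $F'$ back along the coprojection returns the original label $F(i)$ \emph{on the nose} rather than merely up to canonical isomorphism. This rests on the interplay between the universal property of the coproduct in $\mathcal{C}$ and the way Definition \ref{def: rest labeled} defines $\SubThing{F'}{x_i}$ as a composite with the inclusion into $A'_{n+1}$, so that the coprojection and inclusion cancel against the copairing to leave exactly $F(i)_n$.
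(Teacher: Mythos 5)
Your proposal is correct and takes essentially the same route as the paper: both bootstrap from Observation \ref{obs: id = sub suspend} to get the underlying tree isomorphism $\mathrm{copr}(i)\colon A(i)\cong\SubThing{A'}{x_i}$, and then check label compatibility on the nose by computing $\SubThing{F'}{x_i}_n$ as the composite of the inclusion with $F'_{n+1}=\sum F(i)_n$ and cancelling against the coprojection via the universal property of the coproduct, so that the identity natural transformation upgrades $\mathrm{copr}(i)$ to an isomorphism of labeled trees.
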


	\begin{definition} 

	\label{def: constrained trees} 
	
A labeled forest ~$(A,p,F)$ in ~$\mathcal{C}$ is said to be \emph{constrained by the functor~$\mathcal{U}\colon\mathcal{C}\rightarrow\CatName{Set}$} when for each ~${n}\in\mathbb{N}$ 
%
%
we have an isomorphism
\[
\lambda_n	
	\colon A_{n+1}
	\cong \text{el}(\mathcal{U}F_n)
\]
where ~$\text{el}(\mathcal{U}F_n)$ is the 
category of elements of ~$\mathcal{U}F_n$ and 
consists of pairs ~$(y,\xi)$ with ~$y\in A_n$
and ~$\xi\in \mathcal{U}F_n(y)$. %
Then ~ $\lambda_n x = (y,\xi)$ where ~
$	y=p_{n}(x)$. %
The set ~$A_{n+1}$ of ~$(n+1)\text{-dimensional}$ vertices of ~$A$ is determined by the labels on its ~$n\text{-dimensional}$ vertices. %
\end{definition}

	\begin{definition}
		
	\label{def: morphisms of constrained trees}
		
A morphism 
~$	(f, \alpha)\colon(A,p,F)\rightarrow(B,q,G)
$ 
between labeled trees in ~$\mathcal{C}$ is said to be \emph{constrained by 
~$	\mathcal{U}
	\colon\mathcal{C}
	\rightarrow\CatName{Set}
$} 
when for each ~$n\in\mathbb{N}$ 
%
%
%
%
%
\[\bfig
\Square(0,0)|alrb|/->`->`->`->/
	[A_{n+1}
	`\mathrm{el}(\mathcal{U}F_n)
	`B_{m+1}
	`\mathrm{el}(\mathcal{U}G_n)
	;\lambda_n
	`f_{n+1}
	`\mathrm{el}(\mathcal{U}\cdot\alpha_n)
	`\eta_n
	]
\efig\]
commutes where the horizontal arrows are the isomorphisms of Definition \ref{def: constrained trees}. %
An \emph{op-morphism ~
$	(f\colon A\rightarrow B
	, \alpha\colon Gf\Rightarrow F)
	\colon(B,q,G)
	\rightarrow(A,p,F)$ between labeled trees in ~$\mathcal{C}$ constrained by ~$\mathcal{U}\colon\mathcal{C}^\op\rightarrow\CatName{Set}$} 
also has that the above diagram commutes. %
As the horizontal maps above are isomorphisms, 
the set map ~$f_{n+1}$ is determined by the data from lower dimensions. %
We have the category $\Restrict(\mathcal{C},\mathcal{U})$ of labeled forests in $\mathcal{C}$ constrained by $\mathcal{U}$. %
\end{definition}

	\begin{definition}
	
The \TextCategory{2} ~ $\TwoCommaCat$ is the 
\TextCategory{2} of small categories over ${Set}$. %
We define a \TextFunctor{2} ~ 
\[	\Restrict\colon\TwoCommaCat\rightarrow\CatName{Cat}
\] as follows. %
Let ~ 
$	\mathcal{U}
	\colon\mathcal{C}
	\rightarrow\CatName{Set}
$ be an object of ~ $\TwoCommaCat$, which we also write as ~ 
$	(\mathcal{C},\mathcal{U})
$, 
and define ~ 
$	\Restrict(\mathcal{C},\mathcal{U})
$ as the category with objects and morphisms given by Definitions \ref{def: constrained trees} and \ref{def: morphisms of constrained trees} (respectively). %
Let ~ $F\colon\mathcal{C}\rightarrow\mathcal{A}$ be a 
morphism of ~ $\TwoCommaCat$ and define
\begin{align*}
	\Restrict F
&	\colon\Restrict(\mathcal{C}
			,\mathcal{U}_\mathcal{C})
	\rightarrow\Restrict(\mathcal{A}
			,\mathcal{U}_\mathcal{A})
\\&	\colon(A,H)\mapsto(A,F\circ H)
\\&	\colon(f,\alpha)\mapsto(f,F\cdot\alpha)
\end{align*}
by post-composition with ~$F$. 
Let ~ $(A,H)$ be a constrained tree. %
Then ~ 
$	A_{n+1}$ is isomorphic to ~ 
$	\mathrm{el}(\mathcal{U}_\mathcal{C}H_n)
$ which is identically ~
$	\mathrm{el}(\mathcal{U}_\mathcal{A}F H_n)
$ by the commutativity required of morphisms in comma categories, in this case ~ 
$	\mathcal{U}_\mathcal{C}
=	\mathcal{U}_\mathcal{A}F 
$. %
Then ~ $(A,FH)$ is a constrained tree. 
Let ~ $(f,\alpha)$ be a constrained morphism. %
Then  ~ 
$	\mathrm{el}(\mathcal{U}_\mathcal{C}
				\cdot \alpha)
$ is identically ~
$	\mathrm{el}(\mathcal{U}_\mathcal{A} F
				\cdot \alpha)
$
and ~ $\Restrict(F)$ is well-defined. %

Let ~ $\gamma\colon F\rightarrow G$ be a \MathText{2}{-cell} (natural transformation) of ~ $\TwoCommaCat$ and define 
\begin{align*}
	\Restrict \gamma
&	\colon\Restrict F
	\Rightarrow\Restrict G
	\colon\Restrict(\mathcal{C}
			,\mathcal{U}_\mathcal{C})
	\rightarrow\Restrict(\mathcal{A}
			,\mathcal{U}_\mathcal{A})
\\&	\colon(A,H)\mapsto(1_A,\gamma_H^{\phantom{X}})
\end{align*}
by post-composition with ~ $\gamma$. %
Naturality of ~ $\Restrict(\alpha)$ follows directly from that of ~ $\alpha$. %

It is easy to see that  ~ $\Restrict$ preserves identities and composition of \MathText{1}{-cells} and of \MathText{2}{-cells}. %
\end{definition}

	\begin{theorem}
	
	\label{thm: general}

Let 
~$	F\colon\mathcal{C}\rightarrow\mathcal{A}
$ 
and 
~$	G\colon\mathcal{A}\rightarrow\mathcal{C}
$ 
be \MathText{1}{-cells} of ~ $\TwoCommaCat$. %
If ~ $F$ and ~$G$ are an adjoint pair then so are ~ $\Restrict F$ and ~ $\Restrict G$. %
Moreover, if ~$F$ and ~$G$ are mutual inverse equivalences (respectively isomorphisms) then ~$\Restrict F$ and ~$\Restrict G$ are mutual inverse equivalences (respectively isomorphisms). %
\end{theorem}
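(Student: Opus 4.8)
The plan is to read off each claim as an instance of the general fact that a (strict) $2$-functor carries adjunctions to adjunctions, equivalences to equivalences, and isomorphisms to isomorphisms. The essential input is already in place: the preceding definition exhibits $\Restrict$ as a $2$-functor $\TwoCommaCat\rightarrow\CatName{Cat}$, so it preserves identity $1$-cells, composition of $1$-cells, identity $2$-cells, vertical composition of $2$-cells, and whiskering. Since adjunctions, equivalences, and isomorphisms are all specified purely in terms of $1$-cells, $2$-cells, and their composites, applying $\Restrict$ transports the defining data and axioms verbatim, with no recourse to the internal combinatorics of constrained trees.

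First I would treat the adjoint pair. Writing the adjunction $F\dashv G$ by a unit $\eta\colon 1_\mathcal{C}\Rightarrow GF$ and counit $\epsilon\colon FG\Rightarrow 1_\mathcal{A}$ subject to the triangle identities $(\epsilon F)\cdot(F\eta)=1_F$ and $(G\epsilon)\cdot(\eta G)=1_G$, I apply $\Restrict$. Preservation of identity $1$-cells and of $1$-cell composition identifies the source and target of the image $2$-cells, yielding $\Restrict\eta\colon 1\Rightarrow\Restrict G\cdot\Restrict F$ and $\Restrict\epsilon\colon\Restrict F\cdot\Restrict G\Rightarrow 1$. Applying $\Restrict$ to the two triangle identities and using preservation of whiskering and of vertical composition turns them into the triangle identities for $(\Restrict F,\Restrict G,\Restrict\eta,\Restrict\epsilon)$; hence $\Restrict F\dashv\Restrict G$.

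The equivalence and isomorphism cases are then lighter. If $F$ and $G$ are mutual inverse equivalences, witnessed by invertible $2$-cells $1_\mathcal{C}\cong GF$ and $FG\cong 1_\mathcal{A}$, then because $\Restrict$ preserves vertical composition and identity $2$-cells it sends an invertible $2$-cell $\gamma$ to an invertible $2$-cell, with inverse $\Restrict(\gamma^{-1})$; thus the images of the two witnesses are again isomorphisms, exhibiting $\Restrict F$ and $\Restrict G$ as mutual inverse equivalences. If instead $GF=1_\mathcal{C}$ and $FG=1_\mathcal{A}$ strictly, functoriality on $1$-cells alone gives $\Restrict G\cdot\Restrict F=\Restrict(GF)=\Restrict(1_\mathcal{C})=1$ and symmetrically, so $\Restrict F$ and $\Restrict G$ are mutually inverse isomorphisms.

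The only delicate point, and so the main (if minor) obstacle, is confirming that $\Restrict$ really does preserve whiskering, since that is exactly what is required to push the triangle identities through $\Restrict$ in the adjunction case. This amounts to comparing, for a $2$-cell $\gamma$, the constrained-tree component $(\Restrict(F\gamma))_{(A,H)}$ with that of $\Restrict F\cdot\Restrict\gamma$, which follows directly from the defining formula $\Restrict\gamma\colon(A,H)\mapsto(1_A,\gamma_H)$ and post-composition with $F$. Granting this piece of the $2$-functoriality already checked when $\Restrict$ was defined, the theorem follows formally.
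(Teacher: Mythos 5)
Your proposal is correct and takes essentially the same approach as the paper: the paper's entire proof is the one-line observation that $\Restrict$ is a \MathText{2}{-functor} and hence preserves adjunctions, equivalences and isomorphisms, which is precisely the general fact you invoke. Your write-up simply unwinds that fact (units, counits, triangle identities, whiskering) in more detail than the paper bothers to.
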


	\begin{proof}

Since ~$\Restrict$ is a 2-functor, it preserves adjunctions, equivalences and isomorphisms.
\end{proof}

Two cases of labeled trees are of interest here: 
labeling by $\Delta_+$ and by $\mathcal{I}_+$. %
In the case of $\Delta_+$ we constrain by the 
functor $
	\mathcal{U} = U\circ (\Slot)^\wedge
$ and for $\mathcal{I}_+$ by $\mathcal{U}=U$ 
where $U$ is the underlying set functor. %
We define an additional requirement on such trees and call the trees satisfying this additional requirement \emph{cropped}.

	\begin{definition}
	
	\label{def: trees and endpoints}
	

Let ~ $(A,p,F)$ be a labeled tree in ~$\Delta_+$, respectively a labeled tree in ~$\mathcal{I}_+$, constrained as stated above. %
An \emph{end element} ~$x\in A_{n+1}$ is one for 
which ~$\lambda_{n,x}$ is either a greatest or least 
element of ~$F_{n}(p_{n}x)^\wedge$ 
(respectively ~$F_{n}(p_{n}x)$). %
We call ~$(A,p,F)$ \emph{cropped} when, for all ~$n\in\mathbb{N}$,
\[	x\text{ is an \Textendelement { }of } A_{n+1}
	\text{ if and only if }
	\mathcal{U} F_{n+1}(x)\text{ is a singleton.} %
\] %
If $(A,p)$ is a tree of degree 0 (zero), so that all the $p_n$ are bijections, then the labeling $F$ of a 
cropped tree $(A,p,F)$ is unique. %
We call these the \emph{trivial} cropped trees. %
\end{definition}

	\begin{definition} \label{def: Disc category}
	 
The category ~$\Disc$ is the full subcategory 
of ${Con}(\mathcal{I}_+,U)$ whose objects are
cropped labeled trees in ~$\mathcal{I}_+$ of 
finite degree. %
The category ~$\DiscNT$ is the full
subcategory of ~$\Disc$ containing the trees of 
positive degree. 
\end{definition}

	\begin{definition} \label{def: Pidi}
	
The category ~$\Pidi$ is the full subcategory 
of ${Con}(\Delta_+,U(\Slot)^\wedge)$ whose objects 
are cropped labeled trees in ~$\Delta^\op_+$ 
of finite degree. %
The category ~$\PidiNT$ is the full
subcategory of ~$\Pidi$ containing the trees of 
positive degree.
\end{definition}

	\begin{observation}
	
	\label{obs: trivial objects}
	
An object ~$(A,F)$ of ~$\Disc$ is trivial if and only 
if ~$F_{0}(\ast)=[0]$. %
An object ~$(A,F)$ of ~$\Pidi$ is trivial if and only 
if ~$F_{0}(\ast)=[-1]$. %
\end{observation}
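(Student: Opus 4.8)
The plan is to prove both biconditionals by a single induction, using the constraint isomorphism of Definition \ref{def: constrained trees} to control the fibers of the $p_n$ and the cropped condition of Definition \ref{def: trees and endpoints} to propagate the relevant information up the tree. Recall that $(A,p,F)$ is trivial precisely when $(A,p)$ has degree $0$, that is, every $p_n$ is a bijection, and that the isomorphism $\lambda_n\colon A_{n+1}\cong\mathrm{el}(\mathcal{U}F_n)$ identifies the fiber of $p_n$ over $y\in A_n$ with the set $\mathcal{U}F_n(y)$. For $\Disc$ we have $\mathcal{U}=U$, so this fiber has $\Cardinality U F_n(y)$ elements; for $\Pidi$ we have $\mathcal{U}=U(\Slot)^\wedge$, so it has $\Cardinality U(F_n(y))^\wedge$ elements.

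For the forward direction I would argue as follows. If $(A,F)$ is trivial then in particular $p_0\colon A_1\to A_0$ is a bijection, and since $A$ is a tree $A_0$ is a singleton, so $A_1$ is a singleton. In the case of $\Disc$ this forces $\Cardinality U F_0(\ast)=1$, so the interval $F_0(\ast)$ has a single element and hence equals $[0]$. In the case of $\Pidi$ it forces $\Cardinality U(F_0(\ast))^\wedge=1$; writing $F_0(\ast)=[k]$ and using $[k]^\wedge=[k+1]$ from Theorem \ref{theorem: ord/int iso}, this reads $k+2=1$, whence $F_0(\ast)=[-1]$.

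The converse is the substantive part, and I would prove it by induction on $n$, establishing that $A_n$ is a singleton $\{x_n\}$, that $p_{n-1}$ is a bijection, and that $F_n(x_n)=[0]$ (respectively $[-1]$). The base case is the hypothesis $F_0(\ast)=[0]$ (respectively $[-1]$). For the inductive step, when $F_n(x_n)$ is the minimal object its image under $\mathcal{U}$ is a singleton, so $\mathrm{el}(\mathcal{U}F_n)$ is a singleton and the constraint isomorphism makes $A_{n+1}=\{x_{n+1}\}$ with $p_n$ a bijection. The key observation is that this unique vertex $x_{n+1}$ is automatically an end element: its coordinate $\lambda_{n,x_{n+1}}$ is the unique element of $\mathcal{U}F_n(x_n)$, which is simultaneously the greatest and least element of $F_n(x_n)$ (respectively of $(F_n(x_n))^\wedge$). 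The cropped condition then yields that $\mathcal{U}F_{n+1}(x_{n+1})$ is a singleton, and by the same cardinality count as above this forces $F_{n+1}(x_{n+1})=[0]$ (respectively $[-1]$), completing the step. Since every $p_n$ is thereby a bijection, $(A,p)$ has degree $0$ and $(A,F)$ is trivial.

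The main obstacle---really the only point requiring genuine care---is the inductive step of the converse, specifically recognizing that the minimality of the label forces the unique new vertex to be an end element, so that the cropped condition can be applied to keep the induction going; the forward direction and the two cardinality counts are routine once the constraint isomorphism is unwound.
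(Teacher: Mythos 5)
Your proof is correct: the paper states this observation without any proof, and your argument—identifying triviality with degree $0$, using the constraint isomorphism $\lambda_n\colon A_{n+1}\cong\mathrm{el}(\mathcal{U}F_n)$ for the cardinality counts, and propagating the minimal label up the tree via the cropped condition (the unique vertex over a vertex labeled $[0]$, resp.\ $[-1]$, is automatically an end element)—is exactly the unwinding of Definitions \ref{def: constrained trees} and \ref{def: trees and endpoints} that the paper leaves implicit. No gaps; both directions and both cases ($\mathcal{U}=U$ for $\Disc$, $\mathcal{U}=U(\Slot)^\wedge$ for $\Pidi$) are handled correctly.
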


	\begin{observation}
	
	\label{obs: disc example}
	
We provide an example of an object 
of $\Disc$ of degree 3 (three). %
\[\bfig
\node root(0,0)[\lbrack 2 \rbrack] 

\node L1_0(-800,300)[\lbrack 0 \rbrack] 
\node L1_1( 000,300)[\lbrack 3 \rbrack] 
\node L1_2(+800,300)[\lbrack 0 \rbrack] 

\node L2_0(-800,600)[\lbrack 0 \rbrack] 
\node L2_1(-600,600)[\lbrack 0 \rbrack] 
\node L2_2(-300,600)[\lbrack 1 \rbrack] 
\node L2_3(+250,600)[\lbrack 2 \rbrack] 
\node L2_4(+600,600)[\lbrack 0 \rbrack] 
\node L2_5(+800,600)[\lbrack 0 \rbrack] 

\node L3_0(-800,900)[\lbrack 0 \rbrack] 
\node L3_1(-600,900)[\lbrack 0 \rbrack] 
\node L3_2(-400,900)[\lbrack 0 \rbrack] 
\node L3_3(-200,900)[\lbrack 0 \rbrack] 
\node L3_4(+050,900)[\lbrack 0 \rbrack] 
\node L3_5(+250,900)[\lbrack 1 \rbrack] 
\node L3_6(+450,900)[\lbrack 0 \rbrack] 
\node L3_7(+600,900)[\lbrack 0 \rbrack] 
\node L3_8(+800,900)[\lbrack 0 \rbrack] 

\node L4_0(-800,1200)[\lbrack 0 \rbrack] 
\node L4_1(-600,1200)[\lbrack 0 \rbrack] 
\node L4_2(-400,1200)[\lbrack 0 \rbrack] 
\node L4_3(-200,1200)[\lbrack 0 \rbrack] 
\node L4_4(+050,1200)[\lbrack 0 \rbrack] 
\node L4_5(+190,1200)[\lbrack 0 \rbrack] 
\node L4_6(+310,1200)[\lbrack 0 \rbrack] 
\node L4_7(+450,1200)[\lbrack 0 \rbrack] 
\node L4_8(+600,1200)[\lbrack 0 \rbrack] 
\node L4_9(+800,1200)[\lbrack 0 \rbrack] 

\node L5_0(-800,1500)[\vdots] 
\node L5_1(-600,1500)[\vdots] 
\node L5_2(-400,1500)[\vdots] 
\node L5_3(-200,1500)[\vdots] 
\node L5_4(+050,1500)[\vdots] 
\node L5_5(+190,1500)[\vdots] 
\node L5_6(+310,1500)[\vdots] 
\node L5_7(+450,1500)[\vdots] 
\node L5_8(+600,1500)[\vdots] 
\node L5_9(+800,1500)[\vdots] 

\arrow|l|[L1_0`root;] 
\arrow|l|[L1_1`root;] 
\arrow|l|[L1_2`root;] 

\arrow|l|[L2_0`L1_0;] 
\arrow|l|[L2_1`L1_1;] 
\arrow|l|[L2_2`L1_1;] 
\arrow|l|[L2_3`L1_1;] 
\arrow|l|[L2_4`L1_1;] 
\arrow|l|[L2_5`L1_2;] 

\arrow|l|[L3_0`L2_0;] 
\arrow|l|[L3_1`L2_1;] 
\arrow|l|[L3_2`L2_2;] 
\arrow|l|[L3_3`L2_2;] 
\arrow|l|[L3_4`L2_3;] 
\arrow|l|[L3_5`L2_3;] 
\arrow|l|[L3_6`L2_3;] 
\arrow|l|[L3_7`L2_4;] 
\arrow|l|[L3_8`L2_5;] 

\arrow|l|[L4_0`L3_0;] 
\arrow|l|[L4_1`L3_1;] 
\arrow|l|[L4_2`L3_2;] 
\arrow|l|[L4_3`L3_3;] 
\arrow|l|[L4_4`L3_4;] 
\arrow|l|[L4_5`L3_5;] 
\arrow|l|[L4_6`L3_5;] 
\arrow|l|[L4_7`L3_6;] 
\arrow|l|[L4_8`L3_7;] 
\arrow|l|[L4_9`L3_8;] 

\arrow|l|[L5_0`L4_0;] 
\arrow|l|[L5_1`L4_1;] 
\arrow|l|[L5_2`L4_2;] 
\arrow|l|[L5_3`L4_3;] 
\arrow|l|[L5_4`L4_4;] 
\arrow|l|[L5_5`L4_5;] 
\arrow|l|[L5_6`L4_6;] 
\arrow|l|[L5_7`L4_7;] 
\arrow|l|[L5_8`L4_8;] 
\arrow|l|[L5_9`L4_9;] 
\efig\]

\end{observation}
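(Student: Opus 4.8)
The statement is an illustration rather than a theorem, so the task is to verify that the displayed labeled tree $(A,p,F)$ really is an object of $\Disc$, namely (Definition \ref{def: Disc category}) a cropped labeled tree in $\mathcal{I}_+$ constrained by $U$ and of finite degree. The plan is to read the three defining conditions off the diagram level by level. First I would record the cheap structural facts: every vertex carries an interval $[k]$, so $(A,p,F)$ is indeed labeled in $\mathcal{I}_+$; and $A_0=\{\ast\}$ with $F_0(\ast)=[2]$, so the underlying $(A,p)$ is a tree and, since $F_0(\ast)\neq[0]$, the object is non-trivial by Observation \ref{obs: trivial objects}. The remaining content is the constraint isomorphism, the cropped condition, and the degree.

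For the constraint (Definition \ref{def: constrained trees}) I would check, for each $n$, that $\lambda_n\colon A_{n+1}\cong\mathrm{el}(U F_n)$ by confirming that the children of a vertex $x\in A_n$ are indexed by the underlying set $U F_n(x)$ of its label, with $\lambda_{n,x}$ recording the element of $F_n(p_n x)$ that $x$ sits over. Concretely this is a counting check: the root $[2]$ has $3$ children; the height-$1$ vertex $[3]$ has $4$ children while its two siblings $[0]$ have one child each, giving $\lvert A_2\rvert=1+4+1=6$; the height-$2$ labels $[0],[0],[1],[2],[0],[0]$ produce $1+1+2+3+1+1=9$ vertices; and the single $[1]$ at height $3$ (among otherwise trivial labels) produces the extra child accounting for $\lvert A_4\rvert=10$. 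Since $A_{n+1}$ is forced to be $\mathrm{el}(U F_n)$, matching these counts at each level is exactly what the constraint asks.

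The heart of the verification is the cropped condition (Definition \ref{def: trees and endpoints}), where $\mathcal{U}=U$: a child $x\in A_{n+1}$ is an \emph{end element} precisely when $\lambda_{n,x}$ is the greatest or least element of the parent interval $F_n(p_n x)$, and croppedness demands that this hold if and only if $F_{n+1}(x)=[0]$. I would run through each vertex with a nontrivial label and confirm that its two extreme children carry $[0]$ while its interior children carry nontrivial intervals: the $[2]$ at height $2$ has children $[0],[1],[0]$ over the elements $0,1,2$, the $[3]$ at height $1$ has children $[0],[1],[2],[0]$, and every $[0]$-labeled vertex contributes a single child over its unique (hence endpoint) element, again labeled $[0]$. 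This pairing of ``endpoint position'' with ``trivial label'' is the bookkeeping most prone to error, since the cropped condition couples the labels at height $n+1$ to the endpoint structure of the labels at height $n$; verifying it in both directions at every vertex is the main obstacle.

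Finally, for finite degree I would note that every label at height exceeding $3$ equals $[0]$ — the single $[1]$ at height $3$ being the highest nontrivial label — so the fibres of $p_n$ are singletons for all sufficiently large $n$ and $(A,p)$ therefore has finite degree (Definition \ref{def: forest}); the vertical dots indicate exactly this trivial continuation. With the highest nontrivial label occurring at height $3$, this exhibits $(A,p,F)$ as the degree-$3$ object of $\Disc$ depicted.
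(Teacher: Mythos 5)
Your verification is correct, and it is essentially the only argument available: the paper supplies no proof for this Observation (it is a displayed example), so the content is exactly the level-by-level check you perform — that the labels lie in $\mathcal{I}_+$, that the fibre over each vertex $x\in A_n$ has exactly $|UF_n(x)|$ elements (your counts $3,6,9,10$ all match the picture), that end elements carry label $[0]$ and non-end elements carry nontrivial labels at every vertex, and that the tree stabilises so as to have finite degree.

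One caveat concerns your closing sentence. Under Definition~\ref{def: forest}, ``degree $n$'' means $p_{n,m}\colon A_{n+m}\to A_n$ is a bijection for all $m\ge 1$. Here the $[1]$-labelled vertex at height $3$ has a two-element fibre, so $p_3\colon A_4\to A_3$ is \emph{not} a bijection ($|A_3|=9$, $|A_4|=10$), and the least degree of this tree is therefore $4$, not $3$. Your identification of ``degree'' with ``height of the highest nontrivial label'' is not what the cited definition says; the two notions differ by one. The Observation's own heading ``degree 3'' uses the same loose reading, so you have inherited an off-by-one from the paper rather than introduced one, but as written your final step does not follow from Definition~\ref{def: forest}; only ``finite degree'' (which you do establish correctly) is needed for membership in $\Disc$.
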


	\begin{corollary}

	\label{cor: dual disc pidi}
	
The categories $\Disc$ (resp. $\DiscNT$) and $\Pidi$ (resp. $\PidiNT$) are dual.
\end{corollary}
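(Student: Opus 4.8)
The plan is to read off the duality as an immediate consequence of the 2-functoriality of $\Restrict$ (Theorem~\ref{thm: general}) applied to the ordinal/interval isomorphism of Theorem~\ref{theorem: ord/int iso}. First I would present $(\mathcal{I}_+, U)$ and $(\Delta_+, U(\Slot)^\wedge)$ as objects of $\TwoCommaCat$ and exhibit the isomorphism $(\Slot)^\wedge\colon\Delta_+^\op\to\mathcal{I}_+$, together with its inverse $(\Slot)^\vee$, as a pair of mutually inverse isomorphism $1$-cells between them. The only compatibility to check is $U\circ(\Slot)^\wedge = U(\Slot)^\wedge$, which holds on the nose because the right-hand side is by definition the constraint functor chosen for $\Delta_+$; the reverse compatibility $U = U(\Slot)^\wedge\circ(\Slot)^\vee$ then follows since $(\Slot)^\wedge$ and $(\Slot)^\vee$ are mutually inverse.

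Applying the $2$-functor $\Restrict$ to this isomorphism yields an isomorphism of categories $\Restrict(\Delta_+^\op, U(\Slot)^\wedge)\cong\Restrict(\mathcal{I}_+, U)$, where I write the base category of the first object as $\Delta_+^\op$ to record that its constraint functor is contravariant on $\Delta_+$. The \emph{duality}, as opposed to a plain equivalence, enters through the op-morphism convention used in Definition~\ref{def: Pidi} to build $\Pidi$: by Definitions~\ref{def: labeled tree morphism} and~\ref{def: morphisms of constrained trees}, an op-morphism of $\Delta_+$-labeled trees is exactly an ordinary morphism of $\Delta_+^\op$-labeled trees but with its direction reversed, so the ambient category of $\Pidi$ is the opposite of $\Restrict(\Delta_+^\op, U(\Slot)^\wedge)$. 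Taking opposites in the displayed isomorphism therefore turns it into a duality between the ambient categories of $\Disc$ and $\Pidi$.

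It then remains to check that this duality restricts to the full subcategories cut out in Definitions~\ref{def: Disc category} and~\ref{def: Pidi}. The relabeling functor $\Restrict(\Slot)^\wedge$ leaves the underlying tree $(A,p)$ fixed and only post-composes each label functor $F_n$ with the object assignment $[m]\mapsto[m+1]$ of $(\Slot)^\wedge$; hence it preserves the degree, so that both finite degree and positive degree are respected. Moreover $\mathcal{U}F_n$ is literally unchanged, since the two constraint functors agree under the isomorphism, so for each $x$ the set $U\big(F_{n+1}(x)^\wedge\big)$ and the interval $F_n(p_nx)^\wedge$ are the data seen on the $\mathcal{I}_+$ side as well; consequently the cropped condition and the notion of end element of Definition~\ref{def: trees and endpoints} are preserved. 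This gives the duality $\Disc\simeq\Pidi^\op$, and cutting down to positive degree gives $\DiscNT\simeq\PidiNT^\op$.

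The main obstacle will be the careful bookkeeping of the three independent occurrences of taking opposites: the op already present in $\mathcal{I}_+^\op\cong\Delta_+$, the contravariance of the constraint functor $U(\Slot)^\wedge$ on $\Delta_+$, and the op-morphism convention defining $\Pidi$. The delicate point is to confirm that these combine to a single net duality, and not to an equivalence or to a doubled op; this is precisely the reversal isolated in the second step. Everything else is the routine verification that relabeling by an isomorphism of labeling categories, which alters neither the tree nor the value of $\mathcal{U}F$, preserves the structural conditions defining the two subcategories.
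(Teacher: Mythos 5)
Your proof is correct and follows exactly the paper's route: the paper's entire proof is the one-sentence observation that $(\Slot)^\wedge$, $(\Slot)^\vee$ and the constraining functors $U$ and $U(\Slot)^\wedge$ satisfy the hypotheses of Theorem~\ref{thm: general}, so that the 2-functor $\Restrict$ transports the ordinal/interval isomorphism to the stated duality. The extra bookkeeping you supply --- the compatibility checks making $(\Slot)^\wedge$ and $(\Slot)^\vee$ into 1-cells of $\TwoCommaCat$, the op-morphism convention accounting for why one gets a duality rather than an equivalence, and the verification that degree and croppedness are preserved so the duality restricts to $\Disc$, $\Pidi$ and their positive-degree subcategories --- is precisely the detail the paper leaves implicit.
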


	\begin{proof}
	
The functors ~$(\Slot)^\wedge$ and ~$(\Slot)^\vee$ of the ordinal/interval equivalence and the constraining functors of ~$\Disc$ and ~$\Pidi$ satisfy the hypothesis of Theorem \ref{thm: general}. %
\end{proof}

The categories ~$\Disc$ and ~$\Pidi$ of labeled trees were designed for their similarity with the categories ~${Fam}_\Sigma(\mathcal{I}_+)$ and ~${Fam}_\Pi(\Delta_+)$ (respectively). The objects of ~$\Disc$ and ~$\Pidi$ are trees of intervals, respectively of ordinals, with extra structure and their morphisms are essentially tree morphisms which respect the additional structure. 

	\section{Equivalence of new definitions}

	\label{sec: equiv ind and trees} 

In this last section we demonstrate an equivalence
between ~$\Disc$ and ~$\iDisc$ and, in a 
parallel proof, between ~$\Pidi$ and ~$\iPidi$. 

	\begin{observation}

	\label{obs: reconstruction}
	
A constrained tree is a labeled tree with the requirement (see Definition \ref{def: constrained trees}) that the fiber of a vertex and the underlying set of its label are isomorphic. %
The restriction operation reflects the fibers of all vertices. %
Hence the restriction of a constrained (respectively cropped) tree is constrained (respectively cropped) and the restriction of a  constrained morphism is constrained. %
 
We show that the coproduct of constrained trees 
is constrained. %
Given a collection of constrained trees ~
$(A(i),F(i))$ with comma objects ~$\lambda(i)$ then, by the universal property of coproduct,
$	\sum \lambda_n
	\colon \sum A_{n+1}
	\rightarrow \sum \mathrm{el}(\mathcal{U}F(i)_n) 
$ 
is an isomorphism. %
As the functor ~$\mathrm{el}(\Slot)$ preserves coproducts then  
~$	\sum \mathrm{el}(\mathcal{U}F(i)_n)
$ 
is isomorphic to 
~$	\mathrm{el}(\sum \mathcal{U}F(i)_n) 
$. %
Hence
~$	\sum A(i)_{n+1}
	\cong\sum \mathrm{el}(\mathcal{U}F(i)_n) 
$
and ~$\sum A(i)$ is constrained. %

We show that the coproduct of constrained 
morphisms is constrained. %
Given a collection ~$(f(i),\alpha(i))$ of constrained morphisms then in 
\[\bfig
\square(0,0)|alma|/->`->`->`->/
	<1000,500>%
	[\sum \SubThing{A}{i}_{n+1}
	`\sum \mathrm{el}(\mathcal{U} 
						\SubThing{F}{i}_n
						)
	`\sum \SubThing{B}{j}_{n+1}
	`\sum \mathrm{el}(\mathcal{U} 
						\SubThing{G}{j}_n
						)
	;\cong
	`\sum f(i)_{n+1}
	`\sum \mathrm{el}(\mathcal{U}\cdot \alpha_n)
	`\cong
	]
\square(1000,0)|amrb|/->`->`->`->/
	<1000,500>%
	[\sum \mathrm{el}(\mathcal{U} 
						\SubThing{F}{i}_n
						)
	`\mathrm{el}(\sum \mathcal{U} 
						\SubThing{F}{i}_n
						)
	`\sum \mathrm{el}(\mathcal{U} 
						\SubThing{G}{j}_n
						)
	`\mathrm{el}(\sum \mathcal{U} 
						\SubThing{F}{i}_n
						)
	;\cong
	`\sum \mathrm{el}(\mathcal{U}\cdot \alpha_n)
	`\mathrm{el}(\sum \mathcal{U}\cdot \alpha_n)
	`\cong
	]
\efig
\]
the left square commutes by functoriality of coproduct and the right square commutes by naturality. %
Hence the coproduct of constrained  morphisms  
(and constrained trees) is constrained. %
The coproduct of cropped trees is cropped as the coprojections in ~$\CatName{Set}$ are jointly surjective monomorphisms. %

The suspension ~$\SuspendThing{A}{c}$ is constrained by ~$\mathcal{U}$ if ~
$	A_0\cong\mathcal{U}c
$ and ~$A$ is constrained by ~$\mathcal{U}$. %
The suspension ~$\SuspendThing{(f,\alpha)}{g}$ is constrained by ~$\mathcal{U}$ if ~
$	f_0\cong\mathcal{U}g
$ and ~$(f,\alpha)$ is constrained by ~
$	\mathcal{U}$. %
Similarly, ~$\SuspendThing{A}{c}$ is  cropped if ~$A$ is cropped and ~$A(i)$ is trivial when ~$i$ is an endpoint of ~$\mathcal{U}c$. 
\end{observation}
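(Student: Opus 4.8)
The plan is to verify, operation by operation, that the two pieces of structure defining a constrained tree are preserved: the comma-category isomorphism $\lambda_n\colon A_{n+1}\cong\mathrm{el}(\mathcal{U}F_n)$ of Definition \ref{def: constrained trees}, and the commuting square relating $\lambda_n$, $f_{n+1}$ and $\mathrm{el}(\mathcal{U}\cdot\alpha_n)$ of Definition \ref{def: morphisms of constrained trees}; the cropped claims require separately tracking the end-element/singleton equivalence of Definition \ref{def: trees and endpoints}. Since restriction, coproduct and suspension are already understood at the level of the underlying unlabeled trees (Observations \ref{obs: id = sub suspend} and \ref{obs: id = sub suspend + F}), the actual content is to follow what each operation does to these isomorphisms, exploiting the facts that $\mathrm{el}(\Slot)$ preserves coproducts and that $\mathcal{U}$ is the underlying-set functor.

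For restriction I would use that $\SubThing{A}{x}$ is the subfunctor of $A\circ u_n$ reflecting exactly the fibres of $A$ above the relevant vertices, and that $\SubThing{F}{x}_m$ is $F_{n+m}$ precomposed with the inclusion. Then $\lambda$ restricts to an isomorphism $\SubThing{A}{x}_{m+1}\cong\mathrm{el}(\mathcal{U}\SubThing{F}{x}_m)$, which is constrainedness; the cropped condition, being a pointwise statement comparing a label to its underlying fibre, is inherited unchanged. For a constrained morphism the defining square restricts along the same inclusions, using the restricted natural transformation $\SubThing{\alpha}{x}$ of Definition \ref{def: rest labeled}.

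For coproducts the essential input is that $\mathrm{el}(\Slot)$ preserves coproducts, giving $\sum\mathrm{el}(\mathcal{U}F(i)_n)\cong\mathrm{el}(\sum\mathcal{U}F(i)_n)$ and hence $\sum A(i)_{n+1}\cong\mathrm{el}(\mathcal{U}(\sum F(i))_n)$. For morphisms the comparison square factors as a functoriality square for the coprojections followed by a naturality square for $\mathrm{el}(\mathcal{U}\cdot\alpha_n)$, both of which commute. Croppedness survives because the coprojections are jointly surjective monomorphisms, so a vertex of the coproduct has singleton underlying label precisely when its preimage under some coprojection does, matching end elements to end elements.

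The step I expect to be the main obstacle is the suspension, because of the dimension shift $F'_0(\ast)=c$, $F'_{n+1}=F_n$. Here the constraint at the bottom level is not inherited from $A$ but must be supplied by hand: one needs $A'_1=A_0\cong\mathcal{U}c=\mathrm{el}(\mathcal{U}F'_0(\ast))$, which is exactly the hypothesis $A_0\cong\mathcal{U}c$, whereas all higher levels simply reindex the isomorphisms of $A$. The morphism case is handled the same way, using $f_0\cong\mathcal{U}g$ at the bottom and $\alpha'_{n+1}=\alpha_n$ above. Finally, for the cropped claim I would check that the freshly created bottom-level vertices are end elements exactly when the hypothesis that $A(i)$ is trivial at the endpoints of $\mathcal{U}c$ holds, so that the end-element/singleton equivalence of Definition \ref{def: trees and endpoints} persists after suspension; it is this matching at the newly added level, rather than any deep difficulty, that is the delicate point.
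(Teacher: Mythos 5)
Your proposal is correct and follows essentially the same route as the paper's own argument: restriction is handled by observing that it reflects fibres (so the isomorphisms $\lambda_n$ and the constrained-morphism squares restrict), coproducts via preservation of coproducts by $\mathrm{el}(\Slot)$ together with the functoriality-plus-naturality factorization of the comparison square and the joint surjectivity of the coprojections for croppedness, and suspension by supplying the bottom-level constraint from the hypotheses $A_0\cong\mathcal{U}c$, $f_0\cong\mathcal{U}g$, and triviality of the $A(i)$ at endpoints of $\mathcal{U}c$. The only difference is one of emphasis (you flag the dimension shift in the suspension as the delicate step, which the paper states without comment), not of method.
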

 	
	\begin{definition}
	
	\label{def: disk to idisc}
	
We define a functor 
\[	\DiscToiDiscFunctor\colon
		\Disc\rightarrow\iDisc
\]
which in Proposition \ref{prop: Disc to iDisc} will be shown to be an equivalence. %

Define ~$\DiscToiDiscFunctor$ on objects 
of ~$\Disc$ using induction on their degree. %
Send each trivial object of ~$\Disc$
to [0] the trivial object of ~$\iDisc$. %
Assume ~$\DiscToiDiscFunctor$ is defined for objects of degree ~$n$ and let ~$(A,F)$ be an object of 
degree ~$n+1$. %
Define an object ~$H$ of ~$\iDisc$ as follows. %
Let $
	\Root{H}=F_{0}(\ast)
$ and let $
	H(\lambda_{0,x})
=	\DiscToiDiscFunctor \SubThing{A}{x}
$ for each ~${x}\in A_1$. %
Define $
	\DiscToiDiscFunctor A$ as ~ $H
$. %
If ~$(f,\alpha)\colon(A,F)\rightarrow(B,G)$ is an isomorphism then ~$F=G$ as ~$\mathcal{I}_+$ is a 
skeletal category. %
By induction then ~$
	\DiscToiDiscFunctor
$ is constant on isomorphism classes. %
Notice that ~$\DiscToiDiscFunctor$ reflects the trivial objects. %

Notice that ~$\DiscToiDiscFunctor A$ is trivial if and only if ~$A$ is trivial. %
Then ~$\lambda_{0,x}$ is an endpoint of ~$U F_{0}(\ast)$ if and only if ~$x$ is an \Textendelement\ of ~$A_{1}$ if and only if ~$F_{1}(x)=[0]$ (by Definition \ref{def: trees and endpoints}) if and only if ~$A(x)$ is trivial (by Observation \ref{obs: trivial objects}) if and only if ~$\DiscToiDiscFunctor A(x)$ is trivial. %
Hence ~$\DiscToiDiscFunctor$ is well-defined on objects;  
~$H(i)$ is trivial if and only if ~$i$ is an endpoint of ~$\Root{H}$. %

Define ~$\DiscToiDiscFunctor$ on morphisms by induction on the degree of their codomains. %
Send each morphism 
$	(f,\alpha)\colon{A}\rightarrow{B}
$ 
of $\Disc$ with codomain of degree 0 (zero) to the 
unique morphism 
$	\DiscToiDiscFunctor{A}\rightarrow[0]
$ of $\iDisc$. %
Assume ~$\DiscToiDiscFunctor$ is defined for morphisms with codomain of degree ~$n$ and let~
$	(f,\alpha)\colon{A}\rightarrow{B}
$ 
have codomain
of degree ~$n+1$. %
We have an interval morphism ~
$	\alpha_{0,\ast}\colon{F_{0}(\ast)}
	\rightarrow{G_{0}(\ast)}
$
and, by induction, have a morphism ~
$	\DiscToiDiscFunctor \SubThing{f}{x}
$ of ~$\iDisc$ for each ~$x$ in ~${A_{1}}$. %
Define a morphism 
$	g
$ 
of ~$\iDisc$ as follows. %
Let 
$	g=\alpha_{0,\ast}
$ 
and let 
$	g(\lambda_{0,x})
	= \DiscToiDiscFunctor \SubThing{f}{x}
$ 
for each 
$	{x}\in A_1
$. %
Define 
$	\DiscToiDiscFunctor(f,\alpha) = g
$.
\end{definition}

	\begin{proposition}

	\label{prop: Disc to iDisc}

The category ~$\Disc$ is equivalent to the category ~$\iDisc$ by
\[	\DiscToiDiscFunctor\colon
		\Disc\rightarrow\iDisc
\]
which is surjective on objects.
\end{proposition}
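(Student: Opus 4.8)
The plan is to follow verbatim the pattern of the proof of Theorem~\ref{thm: 121 iDisc}, establishing in turn that $\DiscToiDiscFunctor$ is surjective on objects, faithful, and full, each by induction on the degree of the cropped tree (respectively on the height of the codomain of the relevant morphism of $\iDisc$). The constraining condition of Definition~\ref{def: constrained trees} plays here the role that the interval structure on the fibers played for disks, and croppedness (Definition~\ref{def: trees and endpoints}) plays the role of the endpoint condition of disks; once these substitutions are made the three arguments are structurally identical to those of Theorem~\ref{thm: 121 iDisc}.

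For surjectivity I would induct on the height of objects of $\iDisc$. The trivial objects are hit by the trivial cropped trees. Given $H$ of height $n+1$, the inductive hypothesis supplies for each $i\in\Root{H}$ a cropped tree $A(i)$ of degree $n$ with $\DiscToiDiscFunctor A(i)=H(i)$. I would then form the labeled suspension $A'=\SuspendThing{\sum A(i)}{\Root{H}}$, taking the label of the new root to be the interval $\Root{H}$ and indexing the coproduct over the elements of $\Root{H}$. By Observation~\ref{obs: reconstruction} this suspension is again constrained and cropped, and by Observation~\ref{obs: id = sub suspend + F} its subtrees $\SubThing{A'}{x_i}$ are isomorphic to the $A(i)$. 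Since $\DiscToiDiscFunctor$ is constant on isomorphism classes (Definition~\ref{def: disk to idisc}) and $\mathcal{I}_+$ is skeletal, one reads off $\Root{\DiscToiDiscFunctor A'}=\Root{H}$ and $(\DiscToiDiscFunctor A')(i)=H(i)$, whence $\DiscToiDiscFunctor A'=H$.

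For faithfulness and fullness I would induct on the height of the codomain, the base cases reducing to the uniqueness of maps into, respectively out of, the trivial object. In the inductive step for faithfulness, two parallel morphisms $(f,\alpha)$ and $(f',\alpha')$ with $\DiscToiDiscFunctor(f,\alpha)=\DiscToiDiscFunctor(f',\alpha')$ have equal root components $\alpha_{0,\ast}=\alpha'_{0,\ast}$ and, by induction, equal restrictions $(\SubThing{f}{x},\SubThing{\alpha}{x})=(\SubThing{f'}{x},\SubThing{\alpha'}{x})$ for every $x\in A_1$; since the inclusions $\SubThing{A}{x}\rightarrow A\circ u_1$ are jointly surjective and, crucially, a constrained morphism is determined in each dimension by the data of lower dimensions (Definition~\ref{def: morphisms of constrained trees}), this forces $(f,\alpha)=(f',\alpha')$. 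For fullness, given $g\colon\DiscToiDiscFunctor(A,F)\rightarrow\DiscToiDiscFunctor(B,G)$ of height $n+1$, the inductive hypothesis lifts each component morphism $g(i)$ to a constrained morphism on the corresponding subtree; these lifts, together with the interval map underlying $g$, assemble — via the coproduct/inclusion diagram of Observation~\ref{obs: id = sub suspend + F} and the fact that $\DiscToiDiscFunctor$ reflects the constrained structure — into a morphism $(f,\alpha)$ of $\Disc$ with $\DiscToiDiscFunctor(f,\alpha)=g$.

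The main obstacle I expect is the bookkeeping of the natural-transformation component $\alpha$ alongside the tree map $f$ throughout, and in particular verifying in the fullness step that the morphism assembled from the lifted restrictions genuinely satisfies the constraining square of Definition~\ref{def: morphisms of constrained trees} in dimension $n+1$. This is the one place where the disk argument does not transcribe mechanically: one must combine the joint surjectivity of the subtree inclusions with the fact that $f_{n+1}$ is forced by the lower-dimensional data to conclude that the candidate $(f,\alpha)$ is a well-defined constrained morphism. Everything else is a direct translation of Theorem~\ref{thm: 121 iDisc}, and the parallel proof of Proposition~\ref{prop: Pidi to iPidi} for $\Pidi$ and $\iPidi$ proceeds identically.
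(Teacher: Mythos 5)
Your proposal follows the paper's own proof essentially verbatim: the same induction on height of objects for surjectivity (via the suspension $\Suspend(\sum A(i),\Root{H})$ and Observations \ref{obs: reconstruction} and \ref{obs: id = sub suspend + F}), and the same inductions on the height of the codomain for faithfulness and fullness, with the constraining condition forcing $f_{n+1}$ from lower-dimensional data exactly as in the paper's argument. The only detail worth adding when you write it out is the one-line check that each $A(i)$ is trivial precisely when $i$ is an endpoint of $\Root{H}$ (because $\DiscToiDiscFunctor$ reflects trivial objects), which is the hypothesis needed to invoke the croppedness clause of Observation \ref{obs: reconstruction}.
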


	\begin{proof}

\textbf{Surjective.} We show 
$	\DiscToiDiscFunctor
$ 
is surjective on objects by induction on the height 
of objects of ~$\iDisc$. %
The trivial objects of ~$\Disc$ map to ~$[0]$ the object 
of ~$\iDisc$ of height \TextZero. %
Assume 
$	\DiscToiDiscFunctor
$
is surjective on objects 
of height ~$n$ and let ~$	H$ have height ~$n+1$. %
By induction there exists an object ~$A(i)$ such that 
~$	\DiscToiDiscFunctor A(i)= H(i)
$
for each ~$i\in\Root{H}$. %
As ~$\DiscToiDiscFunctor$ reflects the trivial object then ~$A(i)$ is trivial if and only if ~$i$ is an endpoint of ~$\Root{H}$. %
By Observation \ref{obs: reconstruction} then 
~$	(A,F)=\Suspend(\sum A(i),\Root{H})
$
is an object of ~$\Disc$. %
We have 
$	F_{0}(\ast)=\Root{H}
$
and 
~$	\SubThing{A}{x_i}\cong{A(i)}
$ 
by Observation \ref{obs: id = sub suspend + F} for each ~$i\in{H}$. %
Then ~$\DiscToiDiscFunctor A = H$ as ~
$\DiscToiDiscFunctor$ is constant 
on isomorphism classes. %
Hence ~$\DiscToiDiscFunctor$ is surjective on objects. %

\textbf{Faithful.} We show ~$\DiscToiDiscFunctor$ is faithful by induction on the height of the
codomain of morphisms of ~$\iDisc$. %
Let 
~$	H=\DiscToiDiscFunctor(A,F)
$,
~$	K=\DiscToiDiscFunctor(B,G)
$ 
and let \[
	(f,\alpha),(f',\alpha')
	\colon(A,F)\rightarrow(B,G)
\] be parallel morphisms of ~$\Disc$. %
Suppose
~$	\DiscToiDiscFunctor(f,\alpha) = 
	\DiscToiDiscFunctor(f',\alpha')
$
is a morphism of ~$\iDisc$ 
with codomain of height \TextZero. %
Then ~$K$ is terminal and 
~$	(f,\alpha)=(f',\alpha')
$ as ~$B$ is also terminal. %
Assume ~$\DiscToiDiscFunctor$ is faithful for morphisms with codomain of height ~$n$ and suppose~
$	\DiscToiDiscFunctor(f,\alpha)
=	\DiscToiDiscFunctor(f',\alpha')
$
has codomain of height ~$n+1$. %
Then 
$	\alpha_{0,\ast} = \alpha'_{0,\ast}
$ 
and so
$	f_{1}=f'_{1}
$
as ~$(f,\alpha)$ and ~$(f',\alpha')$ are constrained morphisms (see Definition~\ref{def: morphisms of constrained trees}). %
By induction we have
~$	\SubThing{f}{x}=\SubThing{f'}{x}
$ and 
~$	\SubThing{\alpha}{x}
=	\SubThing{\alpha'}{x}
$ 
for each~
$	x\in A_1
$. %
Both upper squares commute in the diagram 
$$\bfig
\place(500,-200)[\alpha_{n+1}]
\place(1000,-200)[\alpha'_{n+1}]
\place(500,-300)[\Rightarrow]
\place(1000,-300)[\Rightarrow]
\square(0,0)|alra|/->`->`->`{@{>}@<3pt>}/
	<1500,500>%
	[\SubThing{A}{x}_n
	`\SubThing{B}{f_1 x}_n
	`A_{n+1}
	`B_{n+1}
	;\SubThing{f}{x}_n = \SubThing{f'}{x}_n
	`\mathrm{incl}
	`\mathrm{incl}
	`f
	]
\square(0,0)|alrb|/->`->`->`{@{>}@<-3pt>}/
	<1500,500>%
	[\SubThing{A}{x}_n
	`\SubThing{B}{f_1 x}_n
	`A_{n+1}
	`B_{n+1}
	;\SubThing{f}{x}_n = \SubThing{f'}{x}_n
	`\mathrm{incl}
	`\mathrm{incl}
	`f'
	]
\square(0,-500)|alrb|/`->`->`{@{>}@<-3pt>}/
	<1500,500>%
	[A_{n+1}
	`B_{n+1}
	`\mathcal{C}
	`\mathcal{C}.
	;
	`F_{n+1}
	`G_{n+1}
	`1
	]
\efig$$
As the inclusions are monomorphisms and are jointly surjective then ~$f=f'$. %
The natural transformations ~$\alpha(i)_{n+1}$ and ~$\alpha'(i)_{n+1}$ are identical and are composites of the entire diagram. %
Again, as the inclusions are jointly surjective then ~$\alpha_{n+1}=\alpha'_{n+1}$. %
Hence 
~$\DiscToiDiscFunctor$ is faithful. %

\textbf{Full.} We show ~$\DiscToiDiscFunctor$ is full by induction on the height of the codomain 
of morphisms of ~$\iDisc$. %
The functor ~$\DiscToiDiscFunctor$ is full on morphisms with codomain of height 0 (zero) as
these objects are 
terminal and ~$\DiscToiDiscFunctor$ is surjective on objects. %
Assume ~$\DiscToiDiscFunctor$ is full for morphisms
with codomain of height ~$n$ and let
~$	g
	\colon{\DiscToiDiscFunctor(A,F)}
	\rightarrow{\DiscToiDiscFunctor(B,G)}
$ have codomain of height ~$n+1$. %
Let ~$	H=\DiscToiDiscFunctor(A,F)
$ and ~$	K=\DiscToiDiscFunctor(B,G)
$. %
Then ~$g$ consists of an interval map ~$
	g\colon \Root{H}\rightarrow\Root{K}
$ from ~ $F_0(\ast)$ to ~ $G_0(\ast)$ and for 
each ~$i\in\Root{H}$ a morphism ~$
	g(i)\colon{H(i)}\rightarrow{K(g i)}
$ of ~$\iDisc$. %
Recall that the data of objects ~$(A,F)$ and ~$(B,G)$ 
includes the isomorphisms 
\[
\lambda_n	
	\colon A_{n+1}
	\cong \text{el}({U}F_n)
\qquad\text{and}\qquad
\eta_n	
	\colon B_{n+1}
	\cong \text{el}({U}G_n)
\]
respectively. %
Define ~$f_1$ as the composite 
$$\bfig
\Square|alrb|/-->`->`->`->/%
	[A_1`B_1`F_0(\ast)`G_0(\ast);f_1`\lambda_0`\eta_0`g]
\efig$$
where the vertical maps are isomorphisms. 
By the induction assumption there exists a morphism $
	f(x)
	\colon{A(x)}
	\rightarrow{B(f_1 x)}
$ of ~$\Disc$
with ~$\DiscToiDiscFunctor f(x)=g(i)$ where ~$\lambda_{0,x}^{\phantom{X}}=i$. %
In the following diagram 
$$\bfig
\place(1200,-200)[\sum \alpha(i)]
\place(1200,-300)[\Rightarrow]
\node m1(0,0)[A_{n+1}]
\node m2(700,0)[\sum \SubThing{A}{x}_n]
\node m3(1700,0)[\sum \SubThing{B}{f_1 x}_n]
\node m4(2400,0)[B_{n+1}]
\node u2(700,500)[\SubThing{A}{x}_n]
\node u3(1700,500)[\SubThing{B}{f_1 x}_n]
\node d2(700,-500)[\mathcal{C}]
\node d3(1700,-500)[\mathcal{C}]
\arrow/->/[u2`m1;\mathrm{incl}]
\arrow|m|/->/[u2`m2;\mathrm{copr}]
\arrow/->/[u2`u3;f(x)_n]
\arrow|m|/->/[u3`m3;\mathrm{copr}]
\arrow/->/[u3`m4;\mathrm{incl}]
\arrow/<--/[m1`m2;]
\arrow|l|/->/[m1`d2;F_{n+1}]
\arrow/-->/[m2`m3;]
\arrow|m|/-->/[m2`d2;\sum \SubThing{F}{x}_n]
\arrow/-->/[m3`m4;]
\arrow|m|/-->/[m3`d3;\sum \SubThing{G}{f_1 x}_n]
\arrow|r|/->/[m4`d3;G_{n+1}]
\arrow|b|/->/[d2`d3;1]
\efig$$
where the coproducts are indexed over all ~$x$ in ~$A_1$ then all regions (except the lower square) commute by coproduct. %
The unique morphism ~$\sum A(x)_n\rightarrow A_{n+1}$ is an isomorphism as the inclusions and coprojections are jointly surjective monomorphisms and the inclusions are monomorphisms. %
Define ~$f_{n+1}$ as the middle horizontal composite. %
Both definitions of ~$f_1$ are identical by the uniqueness property of coproduct. %
Then ~$\SubThing{f}{x}$ is (by Definition \ref{def: rest labeled} of a restricted morphism) the upper horizontal morphism and so 
the given morphism ~$f(x)$ is a restricted morphism based on our definition of ~$f$. %
Define ~$\alpha_{n+1}$ as the composite of the lower triangles and square. %
Then ~$\SubThing{\alpha}{x}$ is (by definition) the entire pasting diagram which, as the triangles commute, is identical to the composite of the vertical squares which is by definition ~$\alpha(i)$. %
Hence ~$\DiscToiDiscFunctor f = g$ and ~$\DiscToiDiscFunctor$ is full. %

Therefore we have an equivalence of categories
\[	\DiscToiDiscFunctor\colon
		\Disc\rightarrow\iDisc
\]
which is surjective on objects.
\end{proof}

	\begin{corollary}

The category ~$\DiscNT$ is equivalent to the category ~$\iDiscNT$. %
\end{corollary}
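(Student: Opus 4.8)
The plan is to deduce the statement directly from Proposition \ref{prop: Disc to iDisc}, which already furnishes an equivalence $\DiscToiDiscFunctor\colon\Disc\rightarrow\iDisc$ that is surjective on objects, by restricting this functor to the full subcategories of non-trivial objects on either side. The first step is to identify these subcategories correctly. By Definition \ref{def: Disc category}, $\DiscNT$ is the full subcategory of $\Disc$ on the trees of positive degree; by Definition \ref{def: trees and endpoints}, the trivial cropped trees are exactly those of degree $0$. Hence $\DiscNT$ is precisely the full subcategory of $\Disc$ spanned by the non-trivial objects, while $\iDiscNT$ is by definition the full subcategory of $\iDisc$ on the non-trivial objects.

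The key input is the remark recorded in Definition \ref{def: disk to idisc} that $\DiscToiDiscFunctor$ both preserves and reflects triviality: $\DiscToiDiscFunctor A$ is trivial if and only if $A$ is trivial. I would use this to conclude that $\DiscToiDiscFunctor$ sends non-trivial objects of $\Disc$ to non-trivial objects of $\iDisc$, so that it restricts to a functor $\DiscNT\rightarrow\iDiscNT$. Fullness and faithfulness of this restriction are immediate, since passing to full subcategories leaves hom-sets unchanged and $\DiscToiDiscFunctor$ is full and faithful by Proposition \ref{prop: Disc to iDisc}. For essential surjectivity, given a non-trivial object $H$ of $\iDisc$, surjectivity on objects from Proposition \ref{prop: Disc to iDisc} produces an object $A$ of $\Disc$ with $\DiscToiDiscFunctor A=H$; since $\DiscToiDiscFunctor$ reflects triviality and $H$ is non-trivial, $A$ is non-trivial and so lies in $\DiscNT$. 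Thus the restricted functor is full, faithful, and even surjective on objects, hence an equivalence.

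There is no genuine obstacle here beyond bookkeeping. The only point requiring care is the identification of \emph{positive degree} with \emph{non-trivial}, which rests on the degree-$0$ characterisation of trivial cropped trees in Definition \ref{def: trees and endpoints} together with the description of trivial objects in Observation \ref{obs: trivial objects}; once this identification is in hand, the corollary is a formal consequence of the triviality-reflecting property of $\DiscToiDiscFunctor$ and the equivalence of Proposition \ref{prop: Disc to iDisc}.
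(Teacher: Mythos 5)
Your proof is correct and is exactly the argument the paper intends: the corollary follows by restricting the object-surjective equivalence $\DiscToiDiscFunctor\colon\Disc\rightarrow\iDisc$ of Proposition \ref{prop: Disc to iDisc} to the full subcategories of non-trivial objects, using the fact (noted in Definition \ref{def: disk to idisc}) that $\DiscToiDiscFunctor$ preserves and reflects triviality, together with the identification of non-trivial cropped trees with those of positive degree. The paper leaves this restriction argument implicit, so your write-up simply makes the same reasoning explicit.
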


The proofs of propositions \ref{prop: Disc to iDisc} and
\ref{prop: Pidi to iPidi} are nearly identical. %
We include both and list here the two ways in which the categories ~$\iDisc$ and ~$\iPidi$ differ which affect the details of the two proofs. %
First, the trivial object of ~$\iDisc$ 
is the terminal object ~$[0]$ 
and the trivial object of ~$\iPidi$ 
is the initial object ~$[-1]$. %
Second, the subtrees of an object ~$H$ of ~$\iDisc$
are indexed by the elements of ~$\Root{H}$ where the 
subtrees of an object ~$K$ of ~$\iPidi$ are indexed 
by the elements of ~$(\Root{K})^\wedge$. %

	\begin{definition}
	
	\label{def: pidi to ipidi}
	
We define a functor 
\[	\PidiToiPidiFunctor\colon
		\Pidi\rightarrow\iPidi
\]
which in Proposition \ref{prop: Pidi to iPidi} is shown to be an equivalence. %

Define ~$\PidiToiPidiFunctor$ on objects of ~$\Pidi$
using induction on their degree. %
Send each trivial object of ~$\Pidi$ to [-1] 
the trivial object of ~$\iPidi$. %
Assume ~$\PidiToiPidiFunctor$ is defined for objects of 
degree ~$n$ and let ~$(B,G)$ be an object of 
degree ~$n+1$. %
Define an object ~$K$ of ~$\iPidi$ as follows. %
Let 
$	\Root{K}=G_{0}(\ast)
$ 
and let 
$	K(\lambda_{0,x})
=	\PidiToiPidiFunctor \SubThing{B}{x}
$ 
for each ~${x}\in B_1$. %
Define
$	\PidiToiPidiFunctor B$ as ~ $K
$. %
If ~$(f,\alpha)\colon(B,G)\rightarrow(A,F)$ is an isomorphism then ~$F=G$ as ~$\mathcal{I}$ is a 
skeletal category. %
By induction then
~$	\PidiToiPidiFunctor
$
is constant on isomorphism classes. %
Notice that ~$\PidiToiPidiFunctor$ reflects the initial
object. %

Notice that ~$\PidiToiPidiFunctor A$ is trivial if and only if ~$A$ is trivial. %
Then ~$\lambda_{0,x}$ is an endpoint 
of ~$U F_{0}^\wedge(\ast)$
if and only if ~$x$ is an \Textendelement of ~$A_{1}$ 
if and only if ~$F_{1}(x)=[-1]$
(by Definition \ref{def: trees and endpoints}) if and 
only if ~$A(x)$ is trivial (by 
Observation \ref{obs: trivial objects}) if and only if ~$
	\PidiToiPidiFunctor A(x)
$ is trivial. %
Hence ~$\PidiToiPidiFunctor$ is well-defined on 
objects; ~$H(i)$ is trivial if and only if ~$i$ is an 
endpoint of ~$(\Root{H})^\wedge$. %

Define ~$\PidiToiPidiFunctor$ on morphisms by induction on the degree of their domains.
Send each morphism ~$
	(f,\alpha)\colon{B}\rightarrow{A}
$ with domain of degree 0 (zero) to the unique morphism
~$	[-1]\rightarrow\PidiToiPidiFunctor{A}
$ of ~$\iPidi$. %
Assume ~$\PidiToiPidiFunctor$ is defined for morphisms with domain of degree ~$n$ and let ~$
	(f,\alpha)\colon(B,G)\rightarrow(A,F)
$ have domain of degree ~$n+1$. %
We have an ordinal morphism ~$
	\alpha_{0,\ast}\colon{G_{0}(\ast)}
	\rightarrow{F_{0}(\ast)}
$ and, by induction, have a morphism ~$
	\PidiToiPidiFunctor \SubThing{f}{x}
$ of ~$\iPidi$ for each ~$x$ in ~${A_{1}}$. %
Define a morphism $g$ of ~$\iPidi$ as follows. %
Let ~$
	g=\alpha_{0,\ast}
$ and let ~$
	g(\lambda_{0,x})
	= \PidiToiPidiFunctor \SubThing{f}{x}
$ for each ${x}\in A_1$. %
Define $
	\PidiToiPidiFunctor(f,\alpha) = g
$. %
\end{definition}

	\begin{proposition}
	
	\label{prop: Pidi to iPidi}

The category ~$\Pidi$ is equivalent to the category ~$\iPidi$ by
\[	\PidiToiPidiFunctor\colon
		\Pidi\rightarrow\iPidi
\]
which is surjective on objects.
\end{proposition}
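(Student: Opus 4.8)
The plan is to follow the proof of Proposition \ref{prop: Disc to iDisc} almost verbatim, establishing in turn that $\PidiToiPidiFunctor$ is surjective on objects, faithful, and full; together these give the asserted equivalence. Two adjustments are forced by the differences between $\iDisc$ and $\iPidi$ recorded above. First, since the trivial object $[-1]$ of $\iPidi$ is \emph{initial} rather than terminal, the morphism inductions run on the height of the \emph{domain}, matching the fact (Definition \ref{def: pidi to ipidi}) that $\PidiToiPidiFunctor$ is defined on morphisms by induction on the degree of their domains and that $\Pidi$ is built from op-morphisms labeled in $\Delta_+^\op$. Second, the subtrees of an object $K$ of $\iPidi$ are indexed by $(\Root{K})^\wedge$ rather than by $\Root{K}$, so every place where the Disc proof sums or quantifies over $\Root{H}$ must instead range over $(\Root{K})^\wedge$, and the triviality clause must be read as ``trivial if and only if an endpoint of $(\Root{K})^\wedge$'' as in Definition \ref{def: pidi to ipidi}.

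For surjectivity I would induct on the height of objects of $\iPidi$. The trivial cropped trees map to $[-1]$, settling height zero. Given $K$ of height $n+1$, the inductive hypothesis supplies for each $i \in (\Root{K})^\wedge$ an object $B(i)$ of $\Pidi$ with $\PidiToiPidiFunctor B(i) = K(i)$, trivial exactly when $i$ is an endpoint. Observation \ref{obs: reconstruction} shows that $(B,G) = \Suspend(\sum B(i), \Root{K})$ is again a constrained, cropped, finite-degree tree in $\Delta_+^\op$, hence an object of $\Pidi$, with $G_0(\ast) = \Root{K}$ and $\SubThing{B}{x_i} \cong B(i)$ by Observation \ref{obs: id = sub suspend + F}. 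Since $\PidiToiPidiFunctor$ is constant on isomorphism classes, $\PidiToiPidiFunctor B = K$.

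For faithfulness and fullness I would induct on the height of the domain. In the base case the domain is the initial object and both statements are immediate. For faithfulness, given parallel op-morphisms with $\PidiToiPidiFunctor(f,\alpha) = \PidiToiPidiFunctor(f',\alpha')$, equality of the object components gives $\alpha_{0,\ast} = \alpha'_{0,\ast}$, whence $f_1 = f'_1$ by the constraint square of Definition \ref{def: morphisms of constrained trees}; the inductive hypothesis yields $\SubThing{f}{x} = \SubThing{f'}{x}$ and $\SubThing{\alpha}{x} = \SubThing{\alpha'}{x}$ for every $x \in A_1$, and joint surjectivity of the inclusions forces $f = f'$ and $\alpha = \alpha'$. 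For fullness, given $g$ in $\iPidi$ between the images, I would define $f_1$ from $g$ through the constraint isomorphisms $\lambda_0$ and $\eta_0$, lift the components of $g$ by the inductive hypothesis, and assemble $f_{n+1}$ and $\alpha_{n+1}$ from the coproduct-and-pasting diagram used in the Disc proof (the one carrying $\sum \alpha(i)$), checking via Definition \ref{def: rest labeled} that the restrictions recover the lifted data.

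I expect the only real obstacle to be bookkeeping rather than mathematics: one must consistently replace $\Root{H}$ by $(\Root{K})^\wedge$ and reverse the direction of the morphism inductions, and in particular verify that the suspension step preserves the ``trivial iff endpoint of $(\Root{K})^\wedge$'' condition. That verification is exactly the final clause of Observation \ref{obs: reconstruction}, which states that $\SuspendThing{A}{c}$ is cropped when $A$ is cropped and $A(i)$ is trivial at endpoints of $\mathcal{U}c$; here $\mathcal{U} = U \circ (\Slot)^\wedge$, so the endpoints in question are precisely those of $(\Root{K})^\wedge$, and the clause applies directly.
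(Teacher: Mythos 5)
Your proposal follows the paper's own proof essentially step for step: the paper likewise reruns the Disc argument with precisely your two adjustments (morphism inductions on the height of the \emph{domain}, indexing and triviality read over $(\Root{K})^\wedge$), using Observations \ref{obs: reconstruction} and \ref{obs: id = sub suspend + F} for surjectivity, the constraint square of Definition \ref{def: morphisms of constrained trees} for faithfulness, and the coproduct pasting diagram (with $f_1$ obtained from $g^\wedge$ through $\lambda_0$ and $\eta_0$) for fullness. If anything, your surjectivity step is stated more consistently than the paper's, which writes the suspension as $\Suspend(\sum B(i),(\Root{K})^\wedge)$ yet concludes $G_0(\ast)=\Root{K}$, whereas your $\Suspend(\sum B(i),\Root{K})$ agrees with Definition \ref{def: suspension treeC} and with the cropped-ness clause of Observation \ref{obs: reconstruction} applied to $\mathcal{U}=U\circ(\Slot)^\wedge$.
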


	\begin{proof}

\textbf{Surjective.} We show 
$	\PidiToiPidiFunctor
$ 
is surjective on objects by induction on the height 
of objects of ~$\iPidi$. 
The trivial objects of ~$\Pidi$ map to ~$[-1]$ 
the object of ~$\iPidi$ of height \TextZero. %
Assume 
$	\PidiToiPidiFunctor
$
is surjective on objects of ~$\iPidi$ 
of height ~$n$ and let ~$K$ have height ~$n+1$. %
By induction there exists a disk ~$B(i)$ such that 
~$	\PidiToiPidiFunctor B(i)= K(i)
$
for each ~$i\in(\Root{K})^\wedge$.
As ~$\PidiToiPidiFunctor$ reflects the trivial object then ~$B(i)$ is trivial if and only if ~$i$ is an endpoint of ~$(\Root{K})^\wedge$.
By Observation \ref{obs: reconstruction} then 
\[	(B,G)=\Suspend(\sum B(i),(\Root{K})^\wedge)
\]
is an object of ~$\Pidi$. 
We have 
$	G_{0}(\ast)=\Root{K}
$
and 
~$	\SubThing{B}{x_i}\cong{B(i)}
$ 
by Observation \ref{obs: id = sub suspend + F} for each ~$i\in{K}$.
Then ~$\PidiToiPidiFunctor B = K$ as ~$\PidiToiPidiFunctor$ is constant on isomorphism classes. %
Hence ~$\PidiToiPidiFunctor$ is surjective on objects. %

\textbf{Faithful.} We show ~$\PidiToiPidiFunctor$ is faithful by induction on the height of the 
domain of  morphisms of ~$\iPidi$. %
Let 
~$	K=\PidiToiPidiFunctor(B,G)
$ 
and
~$	H=\PidiToiPidiFunctor(A,F)
$
and let \[
	(f,\alpha),(f',\alpha')
	\colon(B,G)\rightarrow(A,F)
\] be parallel morphisms of ~$\Pidi$. %
Suppose ~$
	\PidiToiPidiFunctor(f,\alpha) = 
	\PidiToiPidiFunctor(f',\alpha')
$ is a morphism of ~$\iPidi$
with domain of height \TextZero. %
Then ~$K$ is initial
and ~$	(f,\alpha)=(f',\alpha')
$ as ~$B$ is also initial.
Assume ~$\PidiToiPidiFunctor$ is faithful for morphisms 
with domain 
of height ~$n$ and suppose~
$	\PidiToiPidiFunctor(f,\alpha)
=	\PidiToiPidiFunctor(f',\alpha')
$
has domain 
of height ~$n+1$. %
Then 
$	\alpha_{0,\ast} = \alpha'_{0,\ast}
$ 
and so
$	f_{1}=f'_{1}
$
as ~$(f,\alpha)$ and ~$(f',\alpha')$ are constrained morphisms (see Definition~\ref{def: morphisms of constrained trees}). %
By induction we have
~$	\SubThing{f}{x}=\SubThing{f'}{x}
$ and 
~$	\SubThing{\alpha}{x}
=	\SubThing{\alpha'}{x}
$ 
for each~
$	x\in A_1
$. %
Both upper squares commute in the diagram 
$$\bfig
\place(500,-200)[\alpha_{n+1}]
\place(1000,-200)[\alpha'_{n+1}]
\place(500,-300)[\Rightarrow]
\place(1000,-300)[\Rightarrow]
\square(0,0)|alra|/<-`->`->`{@{<-}@<3pt>}/
	<1500,500>%
	[\SubThing{B}{f_1 x}_m
	`\SubThing{A}{x}_m
	`B_{m+1}
	`A_{m+1}
	;\SubThing{f}{x}_m = \SubThing{f'}{x}_m
	`\mathrm{incl}
	`\mathrm{incl}
	`f
	]
\square(0,0)|alrb|/<-`->`->`{@{<-}@<-3pt>}/
	<1500,500>%
	[\SubThing{B}{f_1 x}_m
	`\SubThing{A}{x}_m
	`B_{m+1}
	`A_{m+1}
	;\SubThing{f}{x}_m = \SubThing{f'}{x}_m
	`\mathrm{incl}
	`\mathrm{incl}
	`f'
	]
\square(0,-500)|alrb|/`->`->`{@{>}@<-3pt>}/
	<1500,500>%
	[B_{m+1}
	`A_{m+1}
	`\mathcal{C}
	`\mathcal{C}.
	;
	`G_{n+1}
	`F_{n+1}
	`1
	]
\efig$$
As the inclusions are monomorphisms and are jointly surjective then ~$f=f'$. %
The natural transformations ~$\alpha(i)_{n+1}$ and ~$\alpha'(i)_{n+1}$ are identical and are composites of the entire diagram. %
Again, as the inclusions are jointly surjective then ~$\alpha_{n+1}=\alpha'_{n+1}$. %
Hence 
~$\PidiToiPidiFunctor$ is faithful. %

\textbf{Full.} We show ~$\PidiToiPidiFunctor$ is full by induction on the height of
the domain of morphisms of ~$\iPidi$. %
The functor ~$\PidiToiPidiFunctor$ is full on morphisms
with domain of height 0 (zero) as 
these objects are
initial
and ~$\PidiToiPidiFunctor$ is surjective on objects. %
Assume ~$\PidiToiPidiFunctor$ is full for morphisms 
with domain 
of height ~$n$ and let
$	g\colon{\DiscToiDiscFunctor(B,G)}
	\rightarrow{\DiscToiDiscFunctor(A,F)}
$ 
have domain of
height ~$n+1$. %
Let
~$	H=\DiscToiDiscFunctor(A,F)
$ and
~$	K=\DiscToiDiscFunctor(B,G)
$. 
Then ~$g$ consists of an ordinal map ~
$	g
	\colon\Root{K}
	\rightarrow\Root{H}
$ 
from ~ $G_0(\ast)$ to ~ $F_0(\ast)$ and for each ~$j\in(\Root{H})^\wedge$
a morphism  
$	g(j)
	\colon K(g^\wedge j)
	\rightarrow H(j)
$ of ~$\iPidi$. %
Recall that the data of objects ~$(A,F)$ and ~$(B,G)$ 
includes the isomorphisms 
\[
\lambda_n	
	\colon A_{n+1}
	\cong \text{el}(U(\Slot)^\wedge F_n)
\qquad\text{and}\qquad
\eta_n	
	\colon B_{n+1}
	\cong \text{el}(U(\Slot)^\wedge G_n)
\]
respectively. %
Define ~$f_1$ as the composite 
$$\bfig
\Square|alrb|/<--`->`->`<-/%
	[B_1`A_1`G_0(\ast)^\wedge`F_0(\ast)^\wedge
	;f_1`\lambda_0`\eta_0`g^\wedge]
\efig$$
where the vertical maps are isomorphisms. 
By the induction assumption there exists a morphism
$	f(x)
	\colon{B(f_1 x)}
	\rightarrow{A(x)}
$ 
with ~$\PidiToiPidiFunctor f(x)=g(j)$
where ~$\eta_{0,x}^{\phantom{X}}=j$.
In the following diagram 
$$\bfig
\place(1200,-200)[\sum \alpha(i)]
\place(1200,-300)[\Rightarrow]
\node m1(0,0)[B_{n+1}]
\node m2(700,0)[\sum \SubThing{B}{f_1 x}_n]
\node m3(1700,0)[\sum \SubThing{A}{x}_n]
\node m4(2400,0)[A_{n+1}]
\node u2(700,500)[\SubThing{B}{f_1 x}_n]
\node u3(1700,500)[\SubThing{A}{x}_n]
\node d2(700,-500)[\mathcal{C}]
\node d3(1700,-500)[\mathcal{C}]
\arrow/->/[u2`m1;\mathrm{incl}]
\arrow|m|/->/[u2`m2;\mathrm{copr}]
\arrow/<-/[u2`u3;f(x)_n]
\arrow|m|/->/[u3`m3;\mathrm{copr}]
\arrow/->/[u3`m4;\mathrm{incl}]
\arrow/<--/[m1`m2;]
\arrow|l|/->/[m1`d2;G_{n+1}]
\arrow/<--/[m2`m3;]
\arrow|m|/-->/[m2`d2;\sum \SubThing{G}{x}_n]
\arrow/-->/[m3`m4;]
\arrow|m|/-->/[m3`d3;\sum \SubThing{F}{f_1 x}_n]
\arrow|r|/->/[m4`d3;F_{n+1}]
\arrow|b|/->/[d2`d3;1]
\efig$$
where the coproducts are indexed over all ~$x$ in ~$A_1$
then all regions (except the lower square) commute by coproduct. %
The unique morphism 
~$	\sum A(x)_n
	\rightarrow A_{n+1}
$
is an isomorphism as the inclusions and coprojections are jointly surjective monomorphisms and the inclusions are monomorphisms. %
Define ~$f_{n+1}$ as the middle horizontal composite. %
Both definitions of ~$f_1$ are identical by the uniqueness property of coproduct. %
Then ~$\SubThing{f}{x}$ is (by Definition \ref{def: rest labeled} of a restricted morphism) the upper horizontal morphism and so 
the given morphism ~$f(x)$ is a restricted morphism based on our definition of ~$f$. %
Define ~$\alpha_{n+1}$ as the composite of the lower triangles and square. %
Then ~$\SubThing{\alpha}{x}$ is (by definition) the entire pasting diagram which, as the triangles commute, is identical to the composite of the vertical squares which is by definition ~$\alpha(i)$. %
Hence ~$\PidiToiPidiFunctor f = g$ and ~$\PidiToiPidiFunctor$ is full. %

Therefore we have an equivalence of categories
\[	\PidiToiPidiFunctor\colon
		\Pidi\rightarrow\iPidi
\]
which is surjective on objects.
\end{proof}

	\begin{corollary}

The category ~$\PidiNT$ is equivalent to the category ~$\iPidiNT$. %
\end{corollary}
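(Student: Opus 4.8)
The plan is to show that the equivalence $\PidiToiPidiFunctor\colon\Pidi\rightarrow\iPidi$ of Proposition \ref{prop: Pidi to iPidi} restricts to an equivalence between the two full subcategories of non-trivial objects. First I would recall the identifications already recorded. By Definition \ref{def: trees and endpoints} an object of $\Pidi$ of degree $0$ is precisely a trivial cropped tree, so the objects of $\PidiNT$ (those of positive degree) are exactly the non-trivial objects of $\Pidi$; by Observation \ref{obs: trivial objects} these are the objects with $F_0(\ast)\neq[-1]$. Likewise $\iPidiNT$ is by definition the full subcategory of $\iPidi$ on the non-trivial objects, and $[-1]$ is the unique trivial object of $\iPidi$. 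The crucial matching property is the one noted in Definition \ref{def: pidi to ipidi}: $\PidiToiPidiFunctor A$ is trivial if and only if $A$ is trivial. Hence $\PidiToiPidiFunctor$ carries $\PidiNT$ into $\iPidiNT$, and sends no non-trivial object to a trivial one.

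Next I would observe that the restriction $\PidiToiPidiFunctor|_{\PidiNT}\colon\PidiNT\rightarrow\iPidiNT$ remains fully faithful. Since $\PidiNT$ and $\iPidiNT$ are \emph{full} subcategories of $\Pidi$ and $\iPidi$, their hom-sets coincide with those of the ambient categories, and the action of the restriction on a hom-set $\PidiNT(A,B)$ is literally the bijection $\Pidi(A,B)\rightarrow\iPidi(\PidiToiPidiFunctor A,\PidiToiPidiFunctor B)$ supplied by the equivalence $\PidiToiPidiFunctor$. Thus fullness and faithfulness are inherited directly.

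Finally I would verify essential surjectivity. By Proposition \ref{prop: Pidi to iPidi} the functor $\PidiToiPidiFunctor$ is surjective on objects, so any object $K$ of $\iPidiNT\subseteq\iPidi$ equals $\PidiToiPidiFunctor A$ for some object $A$ of $\Pidi$. Because $K$ is non-trivial and triviality is reflected by $\PidiToiPidiFunctor$, the object $A$ must itself be non-trivial, hence $A$ lies in $\PidiNT$. Consequently the restriction is again surjective on objects, and a fully faithful functor that is surjective on objects is an equivalence.

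The argument is entirely formal once the triviality-matching of Definition \ref{def: pidi to ipidi} is in hand, so I expect no genuine obstacle; it is the exact parallel of the corollary to Proposition \ref{prop: Disc to iDisc} for $\DiscNT$ and $\iDiscNT$. The only point requiring care is the bookkeeping of the last paragraph: one must confirm that ``trivial'' is set up consistently on both sides — positive degree in $\Pidi$ matching non-triviality via Observation \ref{obs: trivial objects}, and $[-1]$ being the unique trivial object of $\iPidi$ — so that the restricted functor genuinely lands in and covers all of $\iPidiNT$, rather than some larger or smaller full subcategory.
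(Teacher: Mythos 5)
Your proof is correct and follows exactly the route the paper intends: the paper states this corollary without proof as an immediate consequence of Proposition~\ref{prop: Pidi to iPidi}, relying on the fact (noted in Definition~\ref{def: pidi to ipidi}) that $\PidiToiPidiFunctor$ preserves and reflects triviality, so the equivalence restricts to the full subcategories of non-trivial objects. Your careful bookkeeping that positive degree in $\Pidi$ corresponds to non-triviality, and that surjectivity on objects descends to the subcategories, is precisely the implicit argument.
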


The category ~$\DiscNT$ provides a description of disks 
as trees with vertices labeled by intervals. %
Similarly ~$\PidiNT$ provides a description of ~$\Theta$ 
in terms of trees with vertices labeled by ordinals. %


	\clearpage

\end{document}